\newtheorem{theorem}{Theorem}
\newtheorem{proposition}{Proposition}
\newtheorem{lemma}{Lemma}
\newtheorem{remark}{Remark}
\newtheorem{definition}{Definition}
\newtheorem{corollary}{Corollary}
\def\N{\mathbb{N}}
\def\Z{\mathbb{Z}}
\def\R{\mathbb{R}}
\numberwithin{equation}{section}
\newcommand\shorttitle{Extremal decomposition of A-localized states}
\begin{document}

\hypersetup{linkcolor=black}
\hypersetup{urlcolor=black}
\title{$A$-localized states for clock models on trees and their extremal decomposition into glassy states}
\author{Christof Külske\footnotemark[1] \and Niklas Schubert\footnotemark[2]}
\date{\today}

\maketitle
\begin{abstract}
    We consider $\Z_q$-valued clock models on a regular tree, for general classes of ferromagnetic nearest neighbor interactions which have a discrete rotational symmetry.
It has been proved recently that, at strong enough coupling, families of homogeneous Markov chain 
Gibbs states $\mu_A$ coexist whose single-site marginals 
concentrate on $A\subset \Z_q$, and which are not convex combinations of each other \cite{AbHeKuMa24}.  
In this note, we aim at a description of  
the extremal decomposition of $\mu_A$ for $|A|\geq 2$ into all extremal Gibbs measures, which may be spatially 
inhomogeneous.  
First, we show that in regimes of very strong coupling, $\mu_A$ is not extremal. Moreover, 
$\mu_A$ possesses a single-site reconstruction property which holds for spin values sent from the origin 
to infinity, when these initial values are chosen from $A$. 
As our main result, we show that $\mu_A$ decomposes into uncountably many extremal inhomogeneous states. The proof is based on multi-site reconstruction which allows to derive concentration properties of branch overlaps. Our method is based 
on a new good site/bad site decomposition adapted to the $A$-localization property, together with a coarse graining argument in local state space. 
\end{abstract}
\textbf{AMS 2000 subject classification:} 60K35, 82B20, 82B44\vspace{0.3cm}

\noindent{\bf Key words:}  Gibbs measures, Tree-indexed Markov chains, Disordered systems, Extremal decomposition, Localization, Multi-site reconstruction. 
\footnotetext[1]{Faculty of Mathematics, Ruhr-University Bochum, 44801 Bochum, Germany}
\footnotetext[0]{E-Mail: \href{mailto:Christof.Kuelske@ruhr-uni-bochum.de}{Christof.Kuelske@ruhr-uni-bochum.de}; ORCID iD: \href{https://orcid.org/0000-0001-9975-8329}{0000-0001-9975-8329}}
\footnotetext[0]{\url{https://math.ruhr-uni-bochum.de/fakultaet/arbeitsbereiche/stochastik/gruppe-kuelske/mitarbeiter/christof-kuelske/}}
\footnotetext[2]{E-Mail: \href{mailto:Niklas.Schubert@ruhr-uni-bochum.de}{Niklas.Schubert@ruhr-uni-bochum.de}; ORCID iD: \href{https://orcid.org/0009-0000-8912-4701}{0009-0000-8912-4701}}
\footnotetext[0]{\url{https://math.ruhr-uni-bochum.de/fakultaet/arbeitsbereiche/stochastik/gruppe-kuelske/mitarbeiter/niklas-schubert/}}
\tableofcontents
\hypersetup{linkcolor=blue}

\section{Introduction}

{\bf Reconstruction, non-extremality, extremal decomposition for free states on trees.} Gibbs measures on infinite trees show amazing properties which are not present on lattices. A first example for this is the free state of the Ising model in zero external field, which can be obtained as the infinite-volume limit of the finite-volume Gibbs distributions with free (open) boundary conditions. It displays two transition temperatures, 
one for the transition between uniqueness and non-uniqueness of the Gibbs measure, and another one for the transition between extremality and non-extremality of the free state.
Proofs of non-extremality on trees can be given in terms of \textit{reconstruction properties}, while single-site reconstruction does not yield information about the nature of the extremal decomposition. 
Here, (single-site) reconstruction for a measure $\mu$ is said to hold, if a spin-value $a$ which is sent from a vertex of the tree to the outside with $\mu$, 
 can be reconstructed from the observation at infinity (with a better probability than just guessing according  to the single-site marginal of the measure $\mu$ itself). 
 This relation is the reason that the topics of statistical physics, probability and information science come together fruitfully in this 
 type of question, see for example \cite{EvKePeSc00}, \cite{KrMeSaSuZd12} and \cite{Mo01}.

Now, given that we are in a parameter regime for non-extremality of a certain 
measure $\mu$, which could e.g. be the free state of an Ising or Potts model, 
it is challenging to go further and to analyze the nature  of the corresponding \textit{extremal decomposition measure} which is associated to $\mu$. 
This extremal decomposition measure describes and quantifies 
how the state $\mu$ breaks into extremal Gibbs states, see formula \eqref{eq: Extremal decomposition GM} below. It is a dirac measure in the case of extremality of the state $\mu$, 
it could be supported on a finite or countable number of atoms (this is what 
happens for the free state of the Ising model on lattices), or more interestingly 
it can be a continuous 
measure with support spread over uncountably many states.

In contrast to the 
behavior on lattices, measures on trees behave in a different 
and very rich way which is not straightforward to analyze. 
Indeed, it took the community 
quite some time to prove the atomless property of the free state of the Ising model, see \cite{GaMaRuSh20}. 
For recent more general results about the extremal decomposition of free states into \textit{glassy states}, 
in the cases of Potts models and clock models, see \cite{CoKuLe24}. This work used an approach which was inspired by \cite{GaMaRuSh20}, however it was significantly different as it involves \textit{branch overlaps}, 
whose use is motivated from \textit{spin-glass theory}. 

Spin glasses started to become famous in the 1970s and are examples of magnetic spin systems with \textit{quenched (or frozen) disorder}, in which 
ferromagnetic and antiferromagnetic couplings are competing. 
This competition may result in many non-translation invariant 
groundstates and almost groundstates, which then give rise to glassy states at low enough temperature. 
For the history of the famous Parisi solution of the 
mean-field Sherrington-Kirkpatrick spin-glass, in 
which spin overlaps play a central role, and 
the mathematics it initiated, see \cite{ArCh13}, \cite{Gu01}, \cite{Gu03}, \cite{MePaMi87}, \cite{Pa80}, \cite{Pa92}, \cite{Tal06}, \cite{Ta11} and \cite{Ta112}. 
Lattice spin glasses are even more difficult to come by. An important approach 
to describe their infinite-volume asymptotics in situations of
random symmetry breaking with competition of many states, is via \textit{metastates}.  Metastates are measures 
on infinite-volume Gibbs states of a disordered system, 
in the same way as the extremal decomposition measure 
is a measure on the available infinite volume states, 
but they come with the different meaning to capture a possible chaotic volume dependence.  
If one is interested in the development of the 
metastate concept to describe chaotic volume 
dependence of disordered systems, one can consult the book \cite{StNe13}. For more generalities and details about disordered systems and spin glasses, see \cite{ArDaMiNeSt10}, \cite{Bo06}, \cite{DoMo24}, \cite{StNe13}. 

Let us now describe the type of clock (discrete rotator) models, type of Gibbs measures and 
the new glassy states which we will find in 
their extremal decomposition. \newline

\noindent {\bf $A$-localized states for clock-models.} Clock models are models of discrete rotators where the spin variables take values in the 
local state space $\Z_q=\{0,1,\dots, q-1\}$, under the influence of  
a nearest neighbor interaction which is invariant under joint 
discrete rotation in $\Z_q$ of all spins, see \eqref{eq: Potential ferr n.n. model} below. 

For work on clock models see, \cite{FrSp81}, \cite{MaSh11}, \cite{Ueno95} from a mathematical perspective, and \cite{BrReJoPl10}, \cite{GoKuGoSa24}, \cite{OrCoNu12}, \cite{ScIr93}, \cite{ToUeYoMi02}, \cite{ToOk02}, \cite{UeKa93} from a physical perspective. 
Let us notice that one particular 
model, which is always included as a first example, is the Potts model. The Potts model provides 
a benchmark case for which our theorems are more easily understood (and proved), but our 
theory is much more general.

In a recent paper \textit{$A$-localized states $\mu_A$} on Cayley trees have been constructed for 
$\Z_q$-valued clock models \cite{AbHeKuMa24}, for sufficiently strong coupling (large inverse temperature).  
While the states could be described in terms of estimates for instance 
on the level of single-site marginals, nothing was  
said about the issue of extremality and the extremal 
decomposition measure, and this is what we going to do 
in this work.

What was then known about these states $\mu_A$? 
Throughout the paper $A\subset \Z_q$ is an arbitrary subset. 
We say that $A$-localization holds, if the single-site 
marginals of the states $\mu_A$ concentrate on the set $A$. Moreover, $\mu_A$ even 
resembles the equidistribution on the set $A$, up to small controllable errors (cp. formula $(iv)$ in \eqref{eq: bounds A-localized states} below). 
The existence of such states was previously only known in the special case of the Potts model, where 
it could be discovered and proved to hold via explicit computations \cite{KuRo17}. 
The construction in the general clock-model case by the authors of \cite{AbHeKuMa24} is necessarily less explicit, but obtained
via a combination of fixed point methods in the framework of the boundary law formalism, see \cite{Ge11} or \cite{Za83} for more information about boundary laws. 

Physically speaking, the clock-model states $\mu_A$ can be viewed as (in general) non-symmetric excitations and perturbations 
of free states concentrating on the smaller effective local state space $A$. Do these states have a symmetry in spin-space? 
Let us start with a set $A$ which sits in $\Z_q$ in a symmetric way, as it is always the 
case for $|A|=2$. Then the state $\mu_A$ will possess a symmetry in spin-space. For general sets $A$ however, 
the corresponding $\mu_A$ will typically be non-symmetric
(except in the very special case of the Potts model).
The authors of \cite{AbHeKuMa24} have proved that these states can not be written as combinations of each other, 
and in particular $\mu_A\neq \frac{1}{|A|} \sum_{a\in A}\mu_{\{a\}}$. It is already more difficult to decide whether these  
states are nevertheless non-extremal and it is 
much more subtle to 
prove results about the character of the extremal decomposition into all (possibly non-homogeneous) Gibbs measures. 
In this work, we are solving this issue in the low temperature region, and we have to do it using a method which does not use any symmetries in spin space.  \medskip

\noindent {\bf Main results. Atomless property of the decomposition measure for $A$-localized states when $|A|\geq 2$.} Our main results of the present paper (summarized in Theorem \ref{thm: Main results}) are informally the following:  \\ 
At sufficiently low temperature the clock-model states $\mu_{A}$ have

\noindent $\bullet$ a \textit{reconstruction} property, but a \textit{restricted} one, 
i.e. (only) spin-values $a$ from the localization set $A$  
can be reconstructed from their noisy images at infinity, see Theorem \ref{thm: mu_A reconstruction theorem}. \\
$\bullet$ an \textit{atomless extremal decomposition measure}, where the decomposition measure is 
dispersed among uncountably many spatially inhomogeneous glassy states. The extremal 
states in the support of the decomposition measure, concentrate on  
low temperature perturbations around mostly flat "almost groundstates" with spin values in the concentration set $A$, see Theorem \ref{thm: A.s. singularity extremals}.

Our present result holds for rather general clock models without any assumption on FKG properties. 
Note that it is also a new result for the $A$-localized states of the 
Potts model \cite{KuRo17}. While the latter paper 
was able to give partial statements about extremality 
and non-extremality, 
the character of the extremal decomposition was not analyzed. 

What assumptions on the potential do we need? While our present proof is very general, we nevertheless need certain degree $d$-dependent assumptions on 
minimal and maximal costs of changing spins along an edge (see the $u,U,d$-assumption \eqref{eq: u,U-bounds} below). 
This condition holds true for the Potts model and many more models. It holds true
for any fixed ferromagnetic interaction 
at sufficiently large degrees $d$. 

Note that the atomless property can never be expected to hold for $|A|=1$, as already the corresponding 
result for the Potts model based on FKG says that the corresponding states are always extremal, see \cite{Ga92}.  

Let us now come to proof ideas, their relation 
to previous work on free states, and challenges and 
novelties of the present case of $A$-localized measures.

\medskip 

\noindent {\bf Proof ideas: Multi-site reconstruction along branches via $\pi$-kernel,  $A$-irregular edges, $A-A^c$ coarse-graining.} We will start to work within the approach of \cite{CoKuLe24} developed for free states, 
which is based on properties of branch overlaps. 
It is clear though that there will need to be essential modifications. 
In the spirit of disordered systems one derives 
concentration properties of suitable branch overlaps, by proving a multi-site reconstruction for which 
Peierls bounds with errors for suitably defined contours can be used. This already allowed to treat also certain 
classes of possibly nonsymmetric 
perturbations of the free states (called central states).
Unfortunately there are severe problems to apply the method to our case directly. 
This method in its original form would fail in application to $\mu_A$, as it is crucially based on a quantitative \textit{lazyness} assumption,  which $\mu_A$ for $A\neq \Z_q$ 
can never satisfy. (Lazyness refers here to the property of 
the associated Markov chain transition matrix $P_A$ to keep any given spin value along an edge with sufficiently high probability.) 
Serious modifications are then needed, 
and these are adapted to concentration properties of the states $\mu_A$.  
To treat them properly, we devise in this paper 
the novel notion of \textit{$A$-irregular edges.} Then we use a \textit{coarse-graining} in local state space to formulate and prove the key estimate for our $\mu_A$ which is 
Lemma \ref{lem: Key Lemma}.

Let us now give some more ideas. To begin with, it is important to recall from abstract Gibbs theory 
that the extremal decomposition measure $\alpha_{\mu}$, see \eqref{eq: extremal decomposition measure}, for a given Gibbs measure  $\mu$ is the distribution of the tail-measurable 
kernels $\pi(\cdot| \omega)$ (see definition \eqref{eq: Limit pi-kernel}) if the $\omega$'s (which are spin-configurations at infinity) 
are chosen from the measure we want to decompose. 

Our core idea if we are on trees, is to view the kernel $\pi(\cdot| \omega) $ as a reconstruction device which 
receives a spin configuration  $\omega$ at infinity and gives back a (random) spin configuration $\sigma$. 
By general theory $\pi(d \sigma| \omega)$ is an extremal Gibbs measure for $\mu$-a.e. $\omega$, see \cite{Ge11}, \cite{Le08} or \hyperref[Sec: Appendix A]{Appendix A}.  
Expressing the abstract probabilistic situation in physical language, one can say that  
it is \textit{the (extremal) Gibbs measure 
 with boundary condition $\omega$ at infinity}, cp. \cite{CoKuLe24} or \cite{GaMaRuSh20}.  

Our method then consists in showing that in parameter regimes of reconstruction, not only single-site reconstruction  (which is commonly 
considered in information theory) holds, but also multi-site reconstruction in whole volumes. 
These volumes will be for us thinned infinite branches of the tree. 
The general idea in all of this is to show that $\sigma$ looks very much 
like $\omega$, with controlled errors. This in turn allows to distinguish two configurations $\omega,\omega'$ a.s. 
when they are independently chosen from the measure under consideration we want to reconstruct. 
Finally, this yields in particular the atomless-property of the decomposition measure, which hence must be 
supported on uncountably many pure states. 

How can all of this be applied to the $A$-concentrated states? 
The intuition why this could have a chance to work is to consider $\mu_A$ as a perturbation of the free measure 
for an $A$-spin model in which all spin-excitations outside of the set $A$ simply have been forgotten. 
If and how these excitations can be rigorously controlled is the main serious problem to be solved in the present work. 
To approach this difficulty  we first devise the notion of an $A$-irregular set of edges. 
 This  will be then used to quantify how much a certain 
"almost ground-state" $\omega$ differs from homogeneous (flat) groundstates in $A$. 
An essential part of the present proof is then to prove a "Peierls bound with errors" in our situation. This is done via 
Lemma \ref{lem: Key Lemma} which is a new main technical step of the proof. 
To yield this lemma, it turns out moreover 
that a local state-space coarse-graining is technically very helpful which only distinguishes 
whether the spin is in $A$ or in its complement. This may create other problems on first sight, as it necessarily 
destroys the (tree-indexed) Markov chain property (unless we are in the very special case of the Potts model).   
Nevertheless, we are able to use the coarse-graining constructively to prove our desired upper bounds 
on the contour activities via a non-stochastic upper bounding matrix $M$, see \eqref{eq: Matrix M}. Using Lemma \ref{lem: Key Lemma}, we are able to harvest our main results of Theorem \ref{thm: mu_A reconstruction theorem} and Theorem \ref{thm: A.s. singularity extremals}.

\medskip 

\textbf{Organization of the paper.} The remainder of the paper is organized as follows. Section \ref{sec: Math prel and main results} provides basic definitions and presents the setup of the model we consider in this work. Especially Theorem \ref{thm: Existence A-localized states} will be discussed which is a recent result in \cite{AbHeKuMa24} about the existence and properties of the so-called $A$-localized states $\mu_A$. Its consequences for our model are stated in Proposition \ref{prop: Bounds A-loc. states}. This section ends by giving our main result of this paper in Theorem \ref{thm: Main results}, namely that these $A$-localized states are not extremal and possess even an uncountable extremal decomposition in a low-temperature regime. The proof of this statement is then split into Section \ref{sec: Key lemma} and \ref{sec: Non-extremality}. In Section \ref{sec: Key lemma}, we will state and prove Lemma \ref{lem: Key Lemma} which is the new main aspect of this paper and essential to prove both statements of the main result (Theorem \ref{thm: Main results}). On the one hand, this lemma proves a Peierls bound with errors for regions with not-too dense occurrence of $A$-irregular edges. On the other hand, it controls such regions probabilistically.  
In more detail, it will turn out that these events are exponentially improbable in the inverse temperature and size of the region under $\mu_A$. The proof is based on our idea of coarse-graining the state space, see Lemma \ref{lem: Upper bound coarse grain}, together with the \textit{propagation of smallness} inductively over the contour activities, presented in Lemma \ref{lem: Propagation lemma}. Section \ref{sec: Non-extremality} deals with the consequences of Lemma \ref{lem: Key Lemma}, which are the restricted single-site reconstruction (Theorem \ref{thm: mu_A reconstruction theorem}) and the multi-site reconstruction (Theorem \ref{thm: Branch overlap}). The proof of the single-site reconstruction implies the positivity of the Edwards-Anderson parameter \eqref{def: Edwards-Anderson parameter}. For the proof of the multi-site reconstruction, one can consider thinned branch overlaps $\underline{\phi}^\omega$ which are tail-measurable observables and has to show that they concentrate under typical extremals $\pi(\cdot |\omega)$. Given this multi-site reconstruction, one is able to conclude the almost sure-singularity of the $\pi$-kernels entering the extremal decomposition of an $A$-localized state $\mu_A$ (Theorem \ref{thm: A.s. singularity extremals}). 
This very last probabilistic part follows with similar 
arguments as the ones given in \cite{CoKuLe24}, 
with relatively minor adaptations to our situation of the $A$-localized states. In this very last 
part we can keep the presentation short, but self-contained for the convenience of the reader, while 
we are more detailed in the first parts of the paper 
dealing with the control of $A$-irregularity.

\section{Mathematical preliminaries and main results}\label{sec: Math prel and main results}

Starting in Subsection \ref{subsec: GM on the tree}, we give a required background to analyze Gibbs measures on trees and introduce the setup of the model considered in this work. The constructions in this section can be found in the book of Georgii \cite{Ge11}, and are included 
here to provide a self-contained presentation. Subsection \ref{Subsec: Gibbs states with finite loc set} deals with Gibbs states which are localized on finite subsets of the state space. Existence results and some properties for our model are stated in Proposition \ref{prop: Bounds A-loc. states} which results from Theorem 3.1 in \cite{AbHeKuMa24}. Finally, we will give the main results of our paper in Subsection \ref{subsec: Main results} summarized in Theorem \ref{thm: mu_A reconstruction theorem}.

\subsection{Gibbs measures on trees}\label{subsec: GM on the tree}

\textbf{Countably regular trees.} Let $(V,E)$ be a tree where $V$ is the set of \textit{vertices} (or \textit{sites}) and $E:=\big\{\{x,y\}: x,y\in V\big\}$ the set of \textit{edges}. Moreover, we will consider \textit{rooted} trees in the sense that we emphasize one vertex which we denote with $\rho$ and call it the \textit{root}. Note that two vertices $x,y\in V$ are called \textit{nearest neighbors} if $\{x,y\}\in E$ and we write $x\sim y$. We will consider \textit{d-regular} trees for $d\in\N_{\geq 2}$, which are trees $(V,E)$ where each vertex has $d+1$ nearest neighbors. Let $x,y\in V$, then there exists a metric $d:V\times V \rightarrow \N_0$ where $d(x,y)$ is defined as the length of the unique minimal path from $x$ to $y$. For each $\Lambda \subset V$, we denote with $\partial \Lambda:=\{x\in V\setminus \Lambda: \exists y\in \Lambda: x\sim y\}$ the \textit{outer boundary} of $\Lambda$ and if $\Lambda$ is finite, it is common to write $\Lambda \Subset V$.\newline

\noindent\textbf{Spin configurations.} Let $q\in \N_{\geq 2}$. We assign to each vertex $x\in V$ a state $\omega_x\in \Z_q$ and call the family $\omega=(\omega_x)_{x\in V}$ a \textit{configuration} on the $d$-regular tree $(V,E)$. The set of all configurations $\Omega:=(\Z_q)^{V}$ is called \textit{configuration space} and the restriction on a subset $\Lambda \subset V$ will we denote with $\Omega_\Lambda:=(\Z_q)^\Lambda$. We define the projection on a subset $\Lambda\subset V$ as the map $\sigma_\Lambda:\Omega \rightarrow \Omega_\Lambda$ where $\sigma_\Lambda(\omega):=\omega_\Lambda$. Note that we write $\sigma_x$ instead of $\sigma_{\{x\}}$ for the projection on a single vertex $x\in V$ and we call this projection the \textit{spin} at $x$. Let $\Lambda\subset \Delta \subset V$, $\omega\in \Omega_\Lambda$ and $\eta \in \Omega_{\Delta \setminus \Lambda}$, then the \textit{concatenation} $\omega\eta\in \Omega_\Delta$ is defined as $\sigma_\Lambda(\omega\eta)=\omega$ and $\sigma_{\Delta \setminus \Lambda}(\omega\eta)=\eta$. The state space $\Z_q$ is endowed with the $\sigma$-algebra $\mathcal{P}(\Z_q)$ and the configuration space $\Omega$ with the product $\sigma$-algebra $\mathscr{F}:=\big(\mathcal{P}(\Z_q)\big)^{\otimes V}$. For a subset $\Delta \subset V$, we will also consider $\mathscr{F}_\Delta:=\sigma(\sigma_x,~x\in \Delta)$ the $\sigma$-algebra on $\Omega$ which is generated by all events occurring in $\Delta$.\newline

\noindent\textbf{Ferromagnetic nearest-neighbor clock potentials with $u,U,d$-bounds.}
In this work, we will consider ferromagnetic nearest-neighbor potentials $\Phi$ of the form
\begin{equation}\label{eq: Potential ferr n.n. model}
    \Phi_{\{x,y\}}(\omega)=\sum^{q-1}_{i,j=0}\overline{u}(|i-j|) \cdot \mathds{1}_{(\omega_x,\omega_y)=(i,j)}\cdot \mathds{1}_{x\sim y}
\end{equation}
where $\overline{u}$ is a function of the distance $|i-j|$ between $i$ and $j$ in $\Z_q$ which satisfies $\overline{u}(0)=0$. Models where the pair potential possesses discrete rotational symmetry are also called \textit{clock models}. Further, we assume that the function $\overline{u}$ fulfills the following \textit{$u,U,d$-bounds}
\begin{equation}\label{eq: u,U-bounds}
   0<u:=\min_{\substack{i,j\in \Z_q\\ i\neq j}}\overline{u}(|i-j|)\leq \max_{\substack{i,j\in \Z_q\\ i\neq j}}\overline{u}(|i-j|)=:U<\infty~~\text{and}~~(d^2+1)u>d \,U.
\end{equation}
 Hence, we need to pay at least an amount of energy $u$ and at most $U$ to change the state in a nearest neighbor of a given vertex. Moreover, the last inequality of \eqref{eq: u,U-bounds} ensures a one step jump back from $A^c$ into the finite localization $A\subset \Z_q$ with sufficiently high probability for the corresponding $A$-localized Gibbs state $\mu_A$, defined in Subsection \ref{Subsec: Gibbs states with finite loc set}.\newline

\noindent\textbf{Gibbsian specifications.} We would like to investigate models via the given interaction potential $\Phi$ and in order to do this, we will need the notion of the so-called \textit{specifications}. In general, these are families $\gamma=(\gamma_\Lambda)_{\Lambda \Subset V}$ of proper probability kernels each from $(\Omega,\mathscr{F}_{\Lambda^c})$ to $(\Omega,\mathscr{F})$ satisfying a consistency relation. A kernel $\gamma_\Lambda$ is called \textit{proper} if $\gamma_\Lambda(A|\omega)=\mathds{1}_A(\omega)$ for all $A\in \mathscr{F}_{\Lambda^c}$. Let $\Lambda \subset \Delta\Subset V$, then the two kernels $\gamma_\Lambda$ and $\gamma_\Delta$ should satisfy the following \textit{consistency relation} $(\gamma_\Delta \gamma_\Lambda)(A|\omega)=\gamma_\Delta(A|\omega)$ for all $A\in \mathscr{F}$ and $\omega\in \Omega$. 

A \textit{Gibbsian specification} $\gamma^\Phi=(\gamma^\Phi_\Lambda)_{\Lambda \Subset V}$ for a given potential $\Phi$ is defined as 
\begin{equation}\label{eq: Gibbs specification}
    \gamma^\Phi_\Lambda(\omega_\Lambda|\omega_{\Lambda^c})=\frac{1}{Z_\Lambda^{\omega}}e^{-\beta H^\Phi_\Lambda(\omega_\Lambda \omega_{\Lambda^c})} 
\end{equation}
for all $\Lambda \Subset V$ and $\omega\in \Omega$. Here, $Z_\Lambda^\omega$ is the normalization constant and 
\begin{equation}\label{def: Hamiltonian}
    H^\Phi_\Lambda(\omega_\Lambda \omega_{\Lambda^c}):=\sum_{\substack{A\subset V\\ A\cap \Lambda\neq \emptyset}} \Phi_A(\omega_\Lambda \omega_{\Lambda^c})
\end{equation}
is the corresponding Hamiltonian for the boundary condition $\omega_{\Lambda^c}$ outside of $\Lambda$.

 Define for each edge $b=\{x,y\}\in E$ and a given configuration $\omega\in \Omega$ a transfer operator $Q_b(\omega):=e^{-\beta \Phi_b(\omega)}$. In this way, we can rewrite the Gibbsian specification in \eqref{eq: Gibbs specification} as
\begin{equation*}
    \gamma^\Phi_\Lambda(\omega_\Lambda|\omega_{\Lambda^c})=\frac{1}{Z^\omega_\Lambda}\prod_{\substack{\{x,y\}\cap \Lambda \neq \emptyset\\ x\sim y}}Q_{\{x,y\}}(\omega_x,\omega_y).
\end{equation*}
For the brevity of notation, we will write $\gamma$ instead of $\gamma^\Phi$ for the Gibbsian specifications which will be analyzed in this work.\newline

\noindent\textbf{Gibbs measures and extremality.} We are interested in infinite-volume measures being consistent with the given specification in the sense that these kernels provide a regular conditional distribution for this measure. In more detail, let $\gamma=(\gamma_\Lambda)_{\Lambda \Subset V}$ be a specification and $\mu\in \mathscr{M}_1(\Omega,\mathscr{F})$ be a probability measure. Then, we call $\mu$ a \textit{Gibbs measure} for $\gamma$ if it satisfies the \textit{DLR-equations}
\begin{equation}\label{eq: DLR-equation}
    \mu(A|\mathscr{F}_{\Lambda^c})=\gamma_\Lambda(A|\cdot)~~~~\mu\text{-almost surely}
\end{equation}
for all $\Lambda \Subset V$ and $A\in \mathscr{F}$. We denote with $\mathscr{G}(\gamma)$ the set of all Gibbs measures for $\gamma$. The extreme points of this set, i.e. the measures $\mu\in \mathscr{G}(\gamma)$ such that there is no $\alpha\in (0,1)$ and $\mu_1,\mu_2\in \mathscr{G}(\gamma)$ with $\mu=\alpha\mu_1+(1-\alpha)\mu_2$ where $\mu_1\neq \mu_2$, are called \textit{extremal Gibbs measures} and the set of these points is denoted with $\text{ex }\mathscr{G}(\gamma)$.

Given a Gibbs measure $\mu\in \mathscr{G}(\gamma)$, one can obtain an extremal Gibbs measure via the so-called \textit{$\pi$-kernel with boundary condition $\omega$ at infinity}. It is defined as 
\begin{equation}\label{eq: Limit pi-kernel}
    \pi(\cdot|\omega):=\lim_{\Lambda\uparrow V}\gamma_\Lambda(\cdot|\omega)
\end{equation}
if the limit exists for all cylinder events (generating $\mathscr{F}$) and $\pi(\cdot|\omega)=:\nu$ with $\nu\in \mathscr{M}_1(\Omega)$ is an arbitrary probability measure else. Indeed, one is able to show that the limit in \eqref{eq: Limit pi-kernel} exists and it is even an extremal Gibbs measure for $\mu$-a.e. $\omega\in \Omega$. Furthermore, these $\pi$-kernels enter the extremal decomposition of an arbitrary Gibbs measure $\mu\in \mathscr{G}(\gamma)$. For more details, see \cite{FV17} and \cite{Ge11} or \hyperref[Sec: Appendix A]{Appendix A}.\newline

\noindent\textbf{Tree-indexed Markov chains.} There is a natural and important generalization for the notion of a Markov chain on a tree, which is now indexed by its vertices $V$. In order to introduce this, we consider directed edges of the tree whose set is denoted by $\Vec{E}$ and define the subset of edges pointing away from a vertex $w\in V$ as
\begin{equation*}
    \Vec{E}_w:=\{\langle x,y\rangle\in \Vec{E} :~ d(w,y)=d(w,x)+1\}.
\end{equation*}
With the help of this notion, we are able to define the past of a directed edge $\langle x,y\rangle$ as
\begin{equation*}
    (-\infty, xy):=\{w\in V :~\langle x,y\rangle \in \Vec{E}_w\}.
\end{equation*}
Note that the \say{past} of a directed edge is just meaningful in the absence of loops. Finally, a probability measure $\mu\in \mathscr{M}_1(\Omega)$ is called a \textit{tree-indexed Markov chain} if 
\begin{equation*}
    \mu(\sigma_y=\omega_y|~\mathscr{F}_{(-\infty,xy)})=\mu(\sigma_y=\omega_y|~\mathscr{F}_{\{x\}})
\end{equation*}
$\mu$-a.s. for any $\langle x,y \rangle\in \Vec{E}$ and any $\omega_y\in \Z_q$. 

Moreover, if 
\begin{equation*}
    \mu(\sigma_y=j|~\mathscr{F}_{\{x\}})=P(\sigma_x,j)~~~\mu\text{-a.s.}
\end{equation*}
for all $j\in \Z_q$ and all $\langle x,y \rangle\in \Vec{E}$, the Markov chain $\mu$ is said to be \textit{spatially homogeneous} with transition matrix $P$.

\subsection{Gibbs states with finite localization sets}\label{Subsec: Gibbs states with finite loc set}

There is the following existence result from which we start our present analysis, which can be found as Theorem 3.1 in \cite{AbHeKuMa24} where it includes 
also a more general form for integer-valued gradient models. For more background 
on gradient models and gradient states, 
see also \cite{CaLuEyMaSlTo14}, \cite{CoDeuMu09}, \cite{CKu15}, \cite{DaHaPe21}, \cite{EnKu08}, \cite{FS97}, \cite{LaTo24}, \cite{Sh05}, \cite{Ve06}.

\begin{theorem}\label{thm: Existence A-localized states}
    Let $d\geq 2$ be an integer and $A\subset \Z_q$ with $n:=|A|\geq 1$. There is a positive quantity $\eta=\eta(d,n)$ such that the following statement holds. Assume that the transfer operator is normalized by $Q(0)=1$ and satisfies the condition 
    \begin{equation}\label{eq: Condition epsilon loc. Thm}
        \epsilon:=\| Q- \mathds{1}_{\{0\}}\|_{\frac{d+1}{2}}\leq \eta(d,n).
    \end{equation}
    Then, the Gibbsian specification which is induced by $Q$ on the $d$-regular tree with local state space $\Z_q$ admits a spatially homogeneous tree-indexed Markov chain Gibbs measure $\mu_A$. Denote by 
    \begin{align}\label{def: single-site marg, diagonal elements}
         \nonumber&\pi_A: \Z_q \rightarrow [0,1],~~~~~\pi_A(i):=\mu_A(\sigma_\rho=i),\\
         &\Delta: \Z_q \rightarrow [0,1],~~~~~~\Delta(i):=P_A(i,i),
    \end{align}
    the functions giving the single-site marginals of $\mu_A$ and the diagonal elements of the transition matrix $P_A$. Moreover, the single-site marginals $\pi_A$ are strictly positive and the following estimates hold:
    \begin{align}\label{eq: bounds A-localized states}
        \nonumber(i) ~\|\Delta|_{A^c}&\|_{\frac{d+1}{d-1}}\leq c_1 \epsilon^{d-1}, ~~~~(ii) ~\min_{i\in A} \Delta(i)>1-c_2\epsilon,\\
        (iii) ~\sum_{i\in A^c} \pi_A(i) \leq c_3& \epsilon^{d+1},~~~~(iv)~(1-c_5\epsilon)\frac{1}{|A|}\leq \pi_A|_A \leq (1-c_4\epsilon)^{-1} \frac{1}{|A|}
    \end{align}
    where $c_1,...,c_5$ are positive constants depending on $d$ and $n$.
    
\end{theorem}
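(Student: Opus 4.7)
The plan is to invoke the boundary-law formalism for spatially homogeneous tree-indexed Markov chains on the $d$-regular tree, as developed in \cite{Za83} and presented in \cite{Ge11}. Homogeneous Markov-chain Gibbs measures are in bijection with equivalence classes of positive functions $\ell : \Z_q \to (0,\infty)$ satisfying a fixed-point equation of the schematic form
\begin{equation*}
    \ell(i) \;\propto\; \bigg( \sum_{j\in \Z_q} Q(i,j)\,\ell(j)\bigg)^{d},
\end{equation*}
so that constructing $\mu_A$ amounts to exhibiting the right boundary law; the transition matrix $P_A$ and the single-site marginal $\pi_A$ are then explicit algebraic expressions in $\ell$ and $Q$.

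At the degenerate point $\epsilon = 0$, one has $Q(i,j) = \mathds{1}_{i=j}$ and the equation collapses to $\ell(i)\propto \ell(i)^{d}$, whose positive solutions, after normalization, are exactly the uniform measures on arbitrary subsets $A \subset \Z_q$. I would take $\ell^{(0)}_A := \tfrac{1}{n}\mathds{1}_A$ as a starting point, write $\ell = \ell^{(0)}_A + r$ with $r$ small, and reorganize the fixed-point equation as one for $r$, whose right-hand side is small thanks to the $\ell^{(d+1)/2}$-smallness of $Q-\mathds{1}_{\{0\}}$. I would measure $r|_A$ and $r|_{A^c}$ in different norms, because their natural orders of magnitude differ, quotient out the one-dimensional scaling freedom of boundary laws, and apply a Banach fixed-point / implicit-function-theorem argument on a small ball. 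This produces, for each $A$ and each $\epsilon \le \eta(d,n)$, a unique solution $\ell_A$, and hence a homogeneous Markov-chain Gibbs measure $\mu_A$ via the standard correspondence.

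The quantitative estimates (i)--(iv) follow by feeding this solution back into the explicit formulas. Self-consistency inside the fixed-point equation forces $\ell$ on $A^c$ to be polynomially small in $\epsilon$, and propagating this through the definitions of $P_A$ and $\pi_A$ produces the stated $\epsilon^{d-1}$ and $\epsilon^{d+1}$ rates, with the exponents dictated by how many off-diagonal factors of $Q$ are required to connect $A$ to $A^c$ in a single step of the chain. The choice of the norm index $\tfrac{d+1}{2}$ arises naturally when Hölder's inequality is applied to the $d$-fold product inside the boundary-law equation. The main obstacle is precisely the perturbative analysis at this degenerate fixed point, because the linearization has a non-trivial kernel reflecting both the global scaling freedom of boundary laws and the fact that at $\epsilon = 0$ there is a whole family of zero-order solutions, one for each $A \subset \Z_q$. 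The work is to decouple these directions by fixing both a normalization and a localization set, verify that the remaining linearized operator is invertible with a bound uniform in small $\epsilon$, and then control the non-linear remainder tightly enough to extract the sharp exponents demanded by (i)--(iv).
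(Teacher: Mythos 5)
Your proposal matches the approach the paper attributes to \cite{AbHeKuMa24} (and summarizes in Appendix B): set up the boundary-law fixed-point equation $u = c(Q*u)^d$ in $\ell^{\frac{d+1}{d}}(\Z_q)$, observe that at $\epsilon=0$ the normalized indicator $\frac{1}{n}\mathds{1}_A$ is a degenerate solution, and perturb off it with a quantitative fixed-point argument, then read off $\pi_A$ and $P_A$ from the explicit algebraic relations $\pi_A \propto u_A^{(d+1)/d}$ and $P_A(i,j)=\pi_A(j)^{d/(d+1)}Q(i-j)/(Q*\pi_A^{d/(d+1)})(i)$. You also correctly identify the genuine obstacle (the degenerate linearization with a kernel coming from scaling and from the many $\epsilon=0$ solutions) and the mechanism that yields the exponents in (i)--(iv).
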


For more details, see the \hyperref[Sec: Appendix B]{Appendix B} or if one is interested in the entire result, see \cite{AbHeKuMa24}. The properties $(i)-(iv)$ state that the corresponding process of $\mu_A$ would like to stay in a state in $A$ and if it is outside of $A$, it would like to change its state, see for an illustration Figure \ref{fig: coarse grained state space}. Note that this theorem implies, especially for our situation, the following result.

\begin{proposition}\label{prop: Bounds A-loc. states}
   Consider a nearest-neighbor $\Z_q$-valued clock model with Gibbsian specification defined via the transfer operator $Q(i)=e^{-\beta \overline{u}(i)}$ where the function $\overline{u}$ satisfies the $u,U,d$-condition \eqref{eq: u,U-bounds}. Then, we have 
   \begin{equation}\label{eq: Bound variable epsilon}
    (q-1)^{\frac{2}{d+1}}e^{-\beta U}\leq\epsilon \leq (q-1)^{\frac{2}{d+1}}e^{-\beta u}.
\end{equation}
   Furthermore, let $ A \subset \Z_q$ with $n=|A|\geq 1$ and $\beta$ be large enough, then there exists a spatially homogeneous tree-indexed Markov chain Gibbs measure $\mu_A$ which is localized on the subset $A$. The transition matrix $P_A$ is strictly positive and the following bounds hold:  
   \begin{align*}
       (i) ~\|&\Delta|_{A^c}\|_{\frac{d+1}{d-1}}\leq C_1 e^{-(d-1)\beta u},~~~~(ii)~\min_{i\in A} \Delta(i)>1-C_2e^{-\beta u},\\
        (iii) ~\sum_{i\in A^c} \pi_A(i) \leq C_1&  e^{-(d+1)\beta u},~~~~(iv)~ \Big(1-(C_1+C_2)  e^{-\beta u} \Big) \frac{1}{|A|}\leq \pi_A|_A  \leq \left(1-C_2e^{-\beta u}\right)^{-1} \frac{1}{|A|}
   \end{align*}
   where $C_1:=2^{d+1}d^3(n+1)^{d}(q-1)^{2}$ and $C_2:=3d(d^{\frac{1}{d-1}}-1)(n+1)(q-1)$.
   
\end{proposition}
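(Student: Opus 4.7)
The plan is to derive Proposition \ref{prop: Bounds A-loc. states} directly from Theorem \ref{thm: Existence A-localized states} by translating the abstract smallness parameter $\epsilon$ into quantitative $\beta$-dependent bounds tailored to the clock potential satisfying the $u,U,d$-condition \eqref{eq: u,U-bounds}.

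First I would explicitly evaluate $\epsilon = \|Q - \mathds{1}_{\{0\}}\|_{(d+1)/2}$ for the normalized transfer operator $Q(i) = e^{-\beta \overline{u}(|i|)}$. Since $\overline{u}(0) = 0$, the term $i = 0$ drops out of the norm, leaving
\begin{equation*}
\epsilon = \Bigl(\sum_{i=1}^{q-1} e^{-\beta \overline{u}(|i|)(d+1)/2}\Bigr)^{2/(d+1)}.
\end{equation*}
Applying the pointwise bounds $u \leq \overline{u}(|i|) \leq U$ for $i \neq 0$, together with the fact that the sum has $q-1$ terms, yields the two-sided estimate \eqref{eq: Bound variable epsilon}.

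Next, for $\beta$ large enough the upper bound $(q-1)^{2/(d+1)} e^{-\beta u}$ falls below the threshold $\eta(d,n)$ of Theorem \ref{thm: Existence A-localized states}, which then produces the desired spatially homogeneous tree-indexed Markov chain Gibbs measure $\mu_A$ with strictly positive transition matrix $P_A$. I would then substitute the quantitative upper bound $\epsilon \leq (q-1)^{2/(d+1)} e^{-\beta u}$ into each of the four estimates $(i)$--$(iv)$ of Theorem \ref{thm: Existence A-localized states}. Using $\epsilon^{d-1} \leq (q-1)^{2(d-1)/(d+1)} e^{-(d-1)\beta u}$ and $\epsilon^{d+1} \leq (q-1)^{2} e^{-(d+1)\beta u}$ produces the stated exponential-in-$\beta$ decay for items $(i)$ and $(iii)$, while items $(ii)$ and $(iv)$ follow from the plain bound on $\epsilon$.

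The remaining work is the explicit identification of the constants $C_1 = 2^{d+1} d^3 (n+1)^d (q-1)^2$ and $C_2 = 3d(d^{1/(d-1)} - 1)(n+1)(q-1)$. This amounts to matching the abstract prefactors $c_1, \ldots, c_5$ from Theorem \ref{thm: Existence A-localized states} (traced through Appendix B and the corresponding construction in \cite{AbHeKuMa24}) against the powers $(q-1)^{2k/(d+1)}$ inherited from $\epsilon^k$, and then absorbing everything into a common pair of constants. The only real obstacle is this explicit bookkeeping: one must verify that the chosen $C_1, C_2$ simultaneously dominate the five competing factors coming from $(i)$--$(iv)$, which is a careful but routine computation once the values of $c_1, \ldots, c_5$ from \cite{AbHeKuMa24} are inserted.
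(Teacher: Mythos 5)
Your proposal follows exactly the paper's own route in Appendix B: compute $\epsilon = \bigl(\sum_{i=1}^{q-1} Q(i)^{(d+1)/2}\bigr)^{2/(d+1)}$, sandwich it using $u \le \overline{u}(|i|) \le U$, check $\epsilon \le \eta_0 \le \eta(d,n)$ for large $\beta$ so Theorem \ref{thm: Existence A-localized states} applies, and then substitute the upper bound on $\epsilon$ into $(i)$--$(iv)$ while dominating the abstract constants $c_1,\ldots,c_5$ by $C_1,C_2$. You correctly defer the constant bookkeeping as the only nontrivial remaining work; the paper carries it out via the elementary bounds $\rho < d^{-1/(d-1)}$, $\eta \ge \eta_0$, $\frac{d}{d-1}\le 2$, $\frac{d+1}{d-1}\le 3$, and also records that strict positivity of $P_A$ comes from the boundary-law formula \eqref{eq: Transition matrix} rather than directly from Theorem \ref{thm: Existence A-localized states}.
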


The proof of this Proposition is given in \hyperref[Sec: Appendix B]{Appendix B}.

\subsection{Main results}\label{subsec: Main results}

In fact, it turns out that the $A$-localized states of Proposition \ref{prop: Bounds A-loc. states} in a range of $u$ and $U$ are no longer extreme in the set of all Gibbs measures being consistent with $\gamma$. Moreover, their extremal decomposition, see \hyperref[Sec: Appendix A]{Appendix A}, is supported on uncountably many inhomogeneous Gibbs states. These results are summarized in the following theorem.

\begin{theorem}\label{thm: Main results}
     Consider a $\Z_q$-valued nearest-neighbor clock model with Hamiltonian given in \eqref{def: Hamiltonian} and a potential defined in \eqref{eq: Potential ferr n.n. model} fulfilling the $u,U,d$-bounds in \eqref{eq: u,U-bounds}. Furthermore, let $A\subset \Z_q$ with $|A|\geq 2$ and $\beta$ be large enough. Then, the $A$-localized state $\mu_A$ for this model, see Proposition \ref{prop: Bounds A-loc. states}, fulfills the following properties:
     \begin{enumerate}[label=\roman*)]
         \item $\mu_A$ is not extremal in $\mathscr{G}(\gamma)$ and moreover,
         \item the corresponding extremal decomposition measure $\alpha_{\mu_A}$, see \eqref{eq: extremal decomposition measure}, appearing in the extremal decomposition 
                    \begin{equation}
        \mu_A=\int_{\text{ex }\mathscr{G}(\gamma)} \nu \cdot \alpha_{\mu_A}(d\nu)
    \end{equation}
    is supported on uncountably many inhomogeneous states, in particular $\alpha_{\mu_A}(\{\nu\})=0$ for all $\nu\in \text{ex }\mathscr{G}(\gamma)$.
     \end{enumerate}
\end{theorem}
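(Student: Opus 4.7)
The plan is to derive both claims from a single technical backbone, namely Lemma \ref{lem: Key Lemma}, which furnishes a Peierls-type upper bound (with controlled errors) on the probability of configurations carrying many $A$-irregular edges in a given finite region. The coarse-graining of $\Z_q$ into $\{A, A^c\}$ destroys the Markov property in the non-Potts case, but the non-stochastic upper-bounding matrix $M$ of \eqref{eq: Matrix M}, fed through the inductive propagation of smallness in Lemma \ref{lem: Propagation lemma}, compensates by producing deterministic majorants for contour activities. Granting this, the two items of Theorem \ref{thm: Main results} are obtained by two essentially separate reconstruction arguments.

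For part $(i)$, I would first prove the restricted single-site reconstruction of Theorem \ref{thm: mu_A reconstruction theorem}: for every $a \in A$, the conditional law on any distant boundary $\partial \Lambda_n$ under $\mu_A(\cdot \mid \sigma_\rho = a)$ is at total-variation distance bounded away from zero (uniformly in $n$) from its analogue under $\mu_A(\cdot \mid \sigma_\rho = a')$ for $a \ne a' \in A$. The mechanism is that Lemma \ref{lem: Key Lemma} applied to $\mu_A(\cdot \mid \sigma_\rho = a)$ suppresses $A$-irregular edges emanating from the root, so the typical boundary configuration stays concentrated on the $a$-value with frequency bounded away from the unconditional $\pi_A(a)$. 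Rephrased as positivity of the Edwards--Anderson parameter \eqref{def: Edwards-Anderson parameter}, this gives non-triviality of the tail $\sigma$-algebra under $\mu_A$, and by the standard equivalence tail-triviality $\Leftrightarrow$ extremality we conclude $\mu_A \notin \text{ex}\,\mathscr{G}(\gamma)$.

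For part $(ii)$, I would upgrade single-site to multi-site reconstruction along thinned infinite branches, in the spirit of \cite{CoKuLe24,GaMaRuSh20}. Selecting at each generation a single descendant yields a branch $\mathcal{B}$, and to any $\omega \in \Omega$ one associates a tail-measurable branch overlap $\underline{\phi}^\omega$ comparing $\omega$ to a sample from the extremal kernel $\pi(\cdot\mid\omega)$. Lemma \ref{lem: Key Lemma} applied along $\mathcal{B}$ shows that, under $\pi(\cdot\mid\omega)$, the random variable $\underline{\phi}^\omega$ concentrates around a value $f(\omega)$ which depends measurably on $\omega$, proving Theorem \ref{thm: Branch overlap}. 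Because $\omega$ itself explores the combinatorially rich family of almost-ground states indexed by sequences in $A^{\mathcal{B}}$, two independent samples $\omega,\omega' \sim \mu_A$ produce distinct overlap profiles $\mu_A \otimes \mu_A$-almost surely, forcing $\pi(\cdot\mid\omega) \perp \pi(\cdot\mid\omega')$. Via the disintegration \eqref{eq: extremal decomposition measure} this is precisely the atomlessness of $\alpha_{\mu_A}$ together with its support on uncountably many (necessarily spatially inhomogeneous) extremals, establishing Theorem \ref{thm: A.s. singularity extremals}.

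The main obstacle will be the concentration step. In \cite{CoKuLe24} it ran via a quantitative \emph{lazyness} bound on the transition matrix, which simply fails for $\mu_A$ when $A \ne \Z_q$, since the chain $P_A$ mixes freely within $A$. The remedy is exactly the coarse-grained Peierls bound of Lemma \ref{lem: Key Lemma}: it trades the missing per-edge lazyness for a volume-level control of $A$-irregular edges through $M$, so that the relevant background configurations along $\mathcal{B}$ are the elements of $A^{\mathcal{B}}$ rather than a single flat reference, while the contour expansion still yields errors which are exponentially small in $\beta$ and in the contour size under the $u,U,d$-assumption \eqref{eq: u,U-bounds}.
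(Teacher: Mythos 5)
Your proposal is correct and follows essentially the same route as the paper: both items are derived from Lemma~\ref{lem: Key Lemma}, part~(i) via restricted single-site reconstruction and positivity of the Edwards--Anderson parameter (Theorems~\ref{thm: mu_A reconstruction theorem} and~\ref{thm: Non-extremality of mu_A}), and part~(ii) via multi-site reconstruction on thinned branch overlaps leading to almost-sure singularity of typical $\pi$-kernels (Theorems~\ref{thm: Branch overlap} and~\ref{thm: A.s. singularity extremals}). The only minor imprecision is your phrasing that $\underline{\phi}^\omega$ concentrates near an $\omega$-dependent value $f(\omega)$: the paper instead establishes a uniform lower bound $\underline{\phi}^\omega \ge 1-\epsilon_1(\beta)-\epsilon_2(\beta)$ under $\pi(\cdot\,|\,\omega)$, and the distinguishing step compares $\pi(\underline{\phi}^\omega|\omega)$ with $\pi(\underline{\phi}^\omega|\omega')$, the latter being capped by the asymptotic $\omega$--$\omega'$ match frequency $\sum_{a\in\Z_q}\mu_A(\sigma_\rho=a)^2<1$ (where $|A|\ge 2$ is used).
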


The proofs of both parts of this theorem are based on the important results of Lemma \ref{lem: Key Lemma} which will be discussed in the next section. In Section \ref{sec: Non-extremality}, we deduce the two statements of Theorem \ref{thm: Main results} where the ideas of the proofs are adapted from \cite{CoKuLe24} but the properties of the $A$-localized states are treated carefully. Part $i)$ already follows then from a restricted single-site reconstruction bound for $\mu_A$ with explicit error bounds, see Theorem \ref{thm: mu_A reconstruction theorem}. Whereas part $ii)$ is a consequence of the almost-sure singularity of two $\pi$-kernels with different boundary conditions drawn from $\mu_A$, see Theorem \ref{thm: A.s. singularity extremals}. The proof of the almost-sure singularity relies on a multi-site reconstruction in the sense that we have reconstruction on thinned branch overlaps, compare Theorem \ref{thm: Branch overlap}.

\section{Control of $A$-irregularity: smallness of $B_A(\gamma)$ under $\mu_A$}\label{sec: Key lemma}

In the single- and multi-site reconstruction (Theorem \ref{thm: mu_A reconstruction theorem} and \ref{thm: Branch overlap}), the $\pi$-kernels $\pi(\cdot|\omega)$, see \eqref{eq: Limit pi-kernel}, are used as a reconstruction device for $\mu_A$-typical configurations $\omega$. Therefore, we want to show that a configuration $\sigma$ drawn from $\pi(\cdot|\omega)$ mostly resembles $\omega$ locally. This will be done by considering contours relative to $\omega$ which were  introduced in \cite{CoKuLe23}. They are characterized by connected components of mismatches of $\sigma$ with $\omega$.  

\begin{definition}[Contours with respect to a fixed configuration $\omega$]
    Let $\omega \in \Omega$ be a fixed reference configuration. A \uline{contour} for the spin configuration $\eta\in \Omega$ relative to $\omega$ is a pair
    \begin{equation*}
        \Bar{\gamma}=(\gamma,\eta_\gamma)
    \end{equation*}
    where the support $\gamma\subset \{x\in V:\eta_x\neq \omega_x\}$ is a finite connected component of the set of incorrect points for $\eta$ (with respect to $\omega$), and $\eta_\gamma=(\eta_x)_{x\in \gamma}$.
\end{definition}

A configuration $\omega$ drawn from an $A$-localized state $\mu_A$ typically contains a small density of \textit{$A$-irregular edges}, i.e. edges where the vertices change their state or one state is located outside of $A$. More precisely, they are defined as follows and an illustration of these $A$-irregular edges in a coarse-grained view, is given in Figure \ref{fig: broadcasting on the tree}.

\begin{definition}[Set of $A$-irregular edges]\label{def: bad events}
    Let $A\subset \Z_q$. We define the set of \uline{$A$-irregular edges} of the configuration $\omega \in \Omega$ by
    \begin{equation*}
        D_A(\omega):=\{\{x,y\}\in E:~\omega_x\neq \omega_y~\text{or}~\omega_x\in A^c~\text{or}~\omega_y\in A^c\}.
    \end{equation*}
    Furthermore, denote the set of edges attached to $\gamma \subset V$ by 
    \begin{equation*}
        E(\gamma):=\{\{x,y\}\in E,\{x,y\}\cap \gamma \neq \emptyset\}.
    \end{equation*}
\end{definition}

 However, the set of irregular edges is in general not uniformly sparse as needed in \cite{CoKuLe23} and \cite{GaRuSh12} to have excess energy estimates which give us reasonable Peierls bounds. Therefore, we need to treat these \say{bad events}, i.e. configurations containing regions with a large number of $A$-irregular edges.
 
 \begin{definition}[Bad events]
    Consider any model with Hamiltonian of the form \eqref{def: Hamiltonian}. Let
    \begin{equation}\label{eq: Definition delta_0}
        \delta_0:=\frac{1}{2}\cdot \frac{(d-1)u}{(u+U)}
    \end{equation}
    where $u,U\in \R_+$ fulfilling the bounds in \eqref{eq: u,U-bounds} and $d$ is the branching number of the tree.

    \noindent Denote by $B_{A,v}$ the \uline{bad event} that there exists a contour around $v$ with respect to $\omega$ which does not have good enough excess energy in the sense that 
    \begin{equation}\label{eq: Bad events}
        B_{A,v}:=\bigcup_{\Bar{\gamma}: v\in \gamma} B_A(\gamma)~~\text{where}~~B_A(\gamma):=\{\omega: |D_A(\omega)\cap E(\gamma)|\geq \delta_0|\gamma|\}.
    \end{equation}
\end{definition}
 
 We will see that these events are exponentially improbable under $\mu_A$ for small temperatures which will be subject of the following lemma. In order to control these bad events, we apply our new ideas of considering the model with a coarse-grained state space, compare Lemma \ref{lem: Upper bound coarse grain}, and propagate small quantities over problematic $A$-irregular edges of a given contour, see Lemma \ref{lem: Propagation lemma}.

\begin{lemma}[Control of $A$-irregularity]\label{lem: Key Lemma}
    Let $A\subset\Z_q$ with $|A|\geq 2$ and $A\neq \Z_q$. Furthermore, let $\beta$ be large enough and $\mu_A$ be the $A$-localized state for the ferromagnetic nearest-neighbor clock model with potential \eqref{eq: Potential ferr n.n. model} fulfilling the $u,U,d$-bounds in \eqref{eq: u,U-bounds}. Then, there exist functions $\epsilon_1(\beta)$, $\epsilon_2(\beta)$ and $\lambda(\beta)$ satisfying $\lim_{\beta\rightarrow \infty}\epsilon_1(\beta)=\lim_{\beta\rightarrow \infty}\epsilon_2(\beta)=0$ and $\lim_{\beta\rightarrow \infty}\lambda(\beta)=\infty$ such that the following statements hold.
    \begin{enumerate}[label=\alph*)]
        \item For $\mu_A$-almost every $\omega$ and every $v\in V$, we have
        \begin{equation}\label{eq: bound pi-Kernel key lemma}
        \pi(\sigma_v\neq \omega_v|\omega) \leq \mathds{1}_{B_{A,v}}(\omega)+\epsilon_1(\beta).
    \end{equation}
    Here, $B_{A,v}$ is the bad event defined in \eqref{eq: Bad events}.
    \item Moreover, the following inequality holds
    \begin{equation}\label{eq: ineq mu_A key lemma}
        \mu_A(B_A(\gamma)) \leq C_4e^{-\lambda(\beta) |\gamma|}
    \end{equation}
     for each finite subtree $\gamma \subset V$ where $B_A(\gamma)$ was given in \eqref{eq: Bad events}. Hence, we have $\mu_A(B_{A,v})\leq \epsilon_2(\beta)$ for any $v\in V$.
    \end{enumerate}
    The function $\lambda$ in $b)$ can be chosen as $\lambda(\beta)=\Tilde{c}+c\delta_0 \beta$ where $c:= \min\{\frac{d^2+1}{d}u-U,\frac{1}{d}\}$ and 
    \begin{equation*}
        \Tilde{c}:=d\log\left(\delta_0 \right)-d\log(d+1)-\delta_0\log(C_1+2C_3).
    \end{equation*}
    Here, $\delta_0$ is given in \eqref{eq: Definition delta_0} and $C_1$ is the positive constant defined in Proposition \ref{prop: Bounds A-loc. states}. Furthermore,
    \begin{equation*}
        \epsilon_1(\beta)=C_3e^{-\frac{(d-1)}{2}\beta u}~~\text{and}~~\epsilon_2(\beta)=C_5e^{-\lambda(\beta)}
    \end{equation*}
    and
    \begin{align*}
        C_3:=2(q-1),~~C_4:=2C_1(d+1)/\delta_0~~\text{and}~~C_5:=2(q-1)C_4.
    \end{align*}    
\end{lemma}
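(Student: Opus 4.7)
My plan splits along part (a), a contour Peierls bound, and part (b), a coarse-grained Markov-chain propagation estimate; throughout I work on a rooted orientation of $\gamma$ so that the tree-indexed Markov chain structure of $\mu_A$ is directly applicable.

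For part (a), fix $\omega \notin B_{A,v}$. The event $\{\sigma_v \neq \omega_v\}$ forces a finite contour $\bar\gamma = (\gamma, \sigma_\gamma)$ with $v \in \gamma$, and the union bound yields $\pi(\sigma_v \neq \omega_v \mid \omega) \leq \sum_{\gamma \ni v}\sum_{\sigma_\gamma}\pi(\bar\gamma \text{ is a contour of }\sigma \mid \omega)$. Each activity is at most $e^{-\beta[H(\sigma) - H(\omega)]}$, where the energy change lives only on $E(\gamma)$. On a non-$A$-irregular edge ($\omega_x = \omega_y \in A$) the $\omega$-energy is zero and the change is $\geq 0$; if additionally the edge is a boundary edge with only one $\sigma$-flipped endpoint, the $u,U,d$-bounds give change $\geq u$. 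On $A$-irregular edges I lose at most $U$. Using $|\partial\gamma| = (d-1)|\gamma| + 2$ for any subtree of a $(d+1)$-regular tree and the assumption $|D_A(\omega) \cap E(\gamma)| < \delta_0|\gamma|$, the net excess energy is at least $u(d-1)|\gamma| - (u+U)\delta_0|\gamma| = \tfrac{(d-1)u}{2}|\gamma|$ by the choice of $\delta_0$. Combined with an exponential subtree count $\leq (e(d+1))^n$ and $(q-1)^n$ spin choices, the geometric series converges at large $\beta$ and gives $\epsilon_1(\beta) = C_3 e^{-(d-1)\beta u/2}$.

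For part (b), I combine the tree-indexed Markov property of $\mu_A$ with the coarse-graining to $\{A, A^c\}$. Cover $B_A(\gamma)$ by the union over subsets $F \subseteq E(\gamma)$ of size $m := \lceil \delta_0|\gamma|\rceil$ of the events $\{F \subseteq D_A(\omega)\}$. The coarse-graining lemma constructs a non-stochastic $2\times 2$ matrix $M$ indexed by $\{A, A^c\}$ whose entries dominate, via Proposition \ref{prop: Bounds A-loc. states}, the $\mu_A$-conditional probabilities of an edge being $A$-irregular: $M(A, A)$ and $M(A, A^c)$ are $O(e^{-\beta u})$, while $M(A^c, \cdot)$ are $O(1)$. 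The propagation lemma then shows by induction on a rooted $\gamma$ that products of $M$-entries along the specified $|F|$ edges dominate $\mu_A(F \subseteq D_A(\omega))$, with every $A^c$-excursion eventually paid for by a return $A^c \to A$ whose cost scales as $e^{-\beta(\tfrac{d^2+1}{d}u - U)}$ thanks to the last inequality in the $u,U,d$-condition. Applying Stirling to $\binom{|E(\gamma)|}{m} \leq (e(d+1)/\delta_0)^m$, absorbing the accumulated constants of order $(C_1+2C_3)^m$ into $\tilde c$, and reading off the exponential rate as $c\delta_0\beta$ yields the claim $\mu_A(B_A(\gamma)) \leq C_4 e^{-\lambda(\beta)|\gamma|}$; summing over $\gamma \ni v$ then gives $\mu_A(B_{A,v}) \leq \epsilon_2(\beta)$.

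The principal obstacle is the loss of the Markov property under the $\{A, A^c\}$ projection: outside the Potts case the coarse-grained process is not Markov, so $M$ only dominates it, and one must manage the non-multiplicative nature of the resulting bound. The propagation lemma therefore has to (i) absorb the $O(1)$ entries of $M$ at $A^c$-vertices using either the small marginal $\pi_A(A^c) = O(e^{-(d+1)\beta u})$ of entering $A^c$ at all, or the cost of a subsequent return $A^c \to A$ provided by the $u,U,d$-assumption, and (ii) exploit the strong lazyness $\min_{i\in A}\Delta(i) > 1 - C_2 e^{-\beta u}$ from Proposition \ref{prop: Bounds A-loc. states} to suppress long runs of $A$-irregular edges inside $A$. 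The constant $c = \min\{\tfrac{d^2+1}{d}u - U,\; \tfrac{1}{d}\}$ then appears as the balance between the per-edge energetic gain along $A^c$-excursions (first term, positive by the $u,U,d$-bound) and a combinatorial cap on subtree growth rates in a $(d+1)$-regular tree (the $1/d$ term).
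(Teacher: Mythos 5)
Your part (a) follows essentially the paper's own route, which is deferred to~\cite{CoKuLe24}: Peierls bound for the root contour $\bar\gamma\ni v$, excess--energy estimate
$u(d-1)|\gamma|-(u+U)\delta_0|\gamma|=\tfrac{(d-1)u}{2}|\gamma|$ on the event $\omega\notin B_{A,v}$, then geometric summation over subtrees and spin values at large $\beta$. The excess-energy bookkeeping and the role of $\delta_0$ are correct, and the form $\epsilon_1(\beta)=C_3 e^{-(d-1)\beta u/2}$ is consistent with the first term of that geometric series.

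For part (b) you take a genuinely different route. The paper begins from the exponential Markov inequality
$\mu_A(B_A(\gamma))\leq\inf_{t\geq 0}e^{-t\delta_0|\gamma|}\mu_A\bigl(e^{t|D_A(\sigma)\cap E(\gamma)|}\bigr)$,
controls the moment-generating function through the coarse-graining upper bound (Lemma~\ref{lem: Upper bound coarse grain}) and the propagation lemma (Lemma~\ref{lem: Propagation lemma}), and then optimizes in $t$ (Lemma~\ref{lem: Minimizer of upper bounded expectation}). You instead cover $B_A(\gamma)$ by the union over subsets $F\subseteq E(\gamma)$ of size $m=\lceil\delta_0|\gamma|\rceil$ of the events $\{F\subseteq D_A\}$, bound $\binom{|E(\gamma)|}{m}$ by Stirling, and estimate each $\mu_A(F\subseteq D_A)$ by a coarse-grained product. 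This eliminates the explicit $t$-optimization, but it transfers the load onto a propagation lemma that must track which edges lie in the chosen $F$ and deliver a uniform-in-$F$ bound of the form $\mu_A(F\subseteq D_A)\lesssim K\,g(\beta)^m$. The MGF version is more uniform in the induction because every edge carries the same $e^{t\mathds{1}_{\cdot\in D_A}}$ weight; your variant is plausible but mechanically heavier, and the constants $\tilde c$, $C_4$ it produces would not obviously match the ones stated in the lemma (though the exponential rate $c\delta_0\beta$ would).

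There is also a concrete error in your description of the coarse-grained matrix. You write that \emph{``$M(A^c,\cdot)$ are $O(1)$''}, but Lemma~\ref{lem: Bounds for M} gives $M(0,0)\lesssim e^{-\beta((d+1)u-U)}+e^{-(d-1)\beta u}$, which is small; only the $A^c\to A$ entry $M(0,1)$ is $O(1)$. The smallness of the $A^c\to A^c$ entry is precisely what the $u,U,d$-condition $(d^2+1)u>dU$ secures (sharpened inside the propagation lemma to $\tfrac{d^2+1}{d}u-U$ after splitting $e^{-\beta u}$ over the $d$ children), and it is essential: if $A^c\to A^c$ steps were $O(1)$, a long run of $A$-irregular edges staying inside $A^c$ would acquire no decay and no exponential bound could hold. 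Your later remark that the $O(1)$ entries at $A^c$-vertices are paid for ``by the cost of a subsequent return $A^c\to A$ provided by the $u,U,d$-assumption'' also inverts the mechanism: the return $A^c\to A$ is the transition with $O(1)$ cost, and it is paid for by the accumulated smallness of the $A\to A^c$ entry $M(1,0)$, the $A^c\to A^c$ entry $M(0,0)$, and the marginal $\pi_A(A^c)$, redistributed via the $\tfrac{u}{d}$-splitting in Lemma~\ref{lem: Propagation lemma}. With the $M$-table as you wrote it, the propagation would fail; with the corrected table your overall plan can be made to work.
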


The essential part of the proof will be to show part $b)$ and this will be done in Subsection \ref{Subsec: Coarse graining method}-\ref{subsec: Computation of a minimizer} below. One can show then part $a)$ by steps similar to \cite{CoKuLe24} and using additionally the fact that $D_{\Z_q}(\omega)\subset D_A(\omega)$. The inequality $\mu_A(B_{A,v})\leq \epsilon_2(\beta)$ follows from \eqref{eq: ineq mu_A key lemma} by summation over all possible contours and using entropy bounds for the number of possible contours. 

\begin{remark}
    Note that the proof idea can be applied to the case $A=\Z_q$ and one would obtain a function $\epsilon_2(\beta)$ which satisfies $\epsilon_2(\beta)\downarrow 0$ as $\beta \uparrow \infty$ without demanding the relation $(d^2+1)u>d\cdot U$ in the $u,U,d$-condition \eqref{eq: u,U-bounds}. This is because we do not have to deal with the problematic situations where we have to ensure a one-step jump probability from $A^c$ to $A$ under the transition matrix $P_A$, see the proof idea in Lemma \ref{lem: Propagation lemma}.
\end{remark}

\textbf{Preparation.} It remains to prove the validity of inequality \eqref{eq: ineq mu_A key lemma}. First of all, we apply the exponential Markov inequality to bound $\mu_A(B_A(\gamma))$ as follows
\begin{align}\label{eq: application exp Markov}
    \nonumber\mu_A(B_A(\gamma))&=\mu_A(|D_A(\sigma)\cap E(\gamma)|\geq \delta_0 |\gamma|)\\
    &\leq \inf_{t\geq 0} e^{-t\delta_0 |\gamma|} \mu_A(e^{t|D_A(\sigma)\cap E(\gamma)|}).
\end{align}
In order to compute a minimizer, we need to analyze the expectation $\mu_A(e^{t|D_A(\sigma)\cap E(\gamma)|})$. For a given finite subtree $\gamma \subset V$ with a fixed vertex $x\in \gamma$, we can write 
\begin{equation}\label{eq: Markov chain prob}
     \mu_A\big(e^{t|D_A(\sigma) \cap E(\gamma)|}\big)=\sum_{\sigma_{\Bar{\gamma}}\in (\Z_q)^{\Bar{\gamma}}} \pi_A(\sigma_x) \prod_{(u,v)\in \Vec{E}_x(\gamma)}P_A(\sigma_u,\sigma_v) e^{t \mathds{1}_{\{u,v\}\in D_A(\sigma)}}
\end{equation}
where we used the Markov chain property of the measure $\mu_A(\cdot|\sigma_x=a)$ with its homogeneous transition matrix $P_A$. Moreover, we denote by
\begin{equation*}
  \Vec{E}_x(\gamma)=\{(u,v)\in \Vec{E}_x: \{u,v\}\cap \gamma \neq \emptyset\}
\end{equation*}
the set of directed edges which are attached to $\gamma \subset V$ and pointing away from the vertex $x\in V$.

\textbf{Outline of the proof.} The remaining steps for the proof of part $b)$ in Lemma \ref{lem: Key Lemma} are divided into three subsections.  In Subsection \ref{Subsec: Coarse graining method}, we switch to the model with a coarse-grained state space which just distinguishes if a spin is located in $A$ or in $A^c$, see Figure \ref{fig: coarse grained state space}. The expectation in \eqref{eq: Markov chain prob} can then be upper bounded by a similar quantity of the coarse-grained model, see Lemma \ref{lem: Upper bound coarse grain}. The coarse-grained model is described via a non-stochastic transition matrix $M$, defined in \eqref{eq: Matrix M}, assigning weights to each edge in the contour. Afterwards in Subsection \ref{Subsec: Propagation of smallness}, we show that this expression is small even for large $t$ via the idea of \textit{propagation of smallness}, presented in Lemma \ref{lem: Propagation lemma}. In this method, we propagate small quantities from the inside to the outside of the subtree over the problematic $A$-irregular edges where one jumps from $A^c$ to $A$ which might become very probable but carry the large $e^t$ factor. Both proofs of Lemma \ref{lem: Upper bound coarse grain} and Lemma \ref{lem: Propagation lemma} are based on an induction over the number of elements of the subtrees $\gamma \Subset V$. Finally in Subsection \ref{subsec: Computation of a minimizer}, we compute a unique minimizer for the upper bound of \eqref{eq: application exp Markov}, see Lemma \ref{lem: Minimizer of upper bounded expectation}, and present how this implies the statement of part $b)$ in Lemma \ref{lem: Key Lemma}.

\subsection{The coarse-graining method for $\Z_q$} \label{Subsec: Coarse graining method}

We will introduce for each vertex $x\in V$ the random variable $\tau_x:=\mathds{1}_{\sigma_x\in A}$ which we consider to be local coarse-graining. Moreover, let us define the matrix $M=\big(M(a,b)\big)_{a,b\in \{0,1\}}$ whose entries are given as
    \begin{align}\label{eq: Matrix M}
    \nonumber &M(0,0):= \sup_{a\in A^c} P_A(a,A^c)e^t,~~M(1,0):=\sup_{a\in A} P_A(a,A^c)e^t,\\
    M(0,1):=&\sup_{a\in A^c} P_A(a,A)e^t~~\text{and}~~M(1,1):=\sup_{a\in A}\big(P_A(a,A\setminus\{a\})e^t +P_A(a,a)\big).
\end{align}
Note that $M$ is in general not a stochastic matrix. We are able to give a bound of \eqref{eq: Markov chain prob} in terms of $M$ and $\pi_A$ as follows:
\begin{lemma}\label{lem: Upper bound coarse grain}
    For each finite subtree $\gamma \subset V$ with a given vertex $x\in \gamma$, the expression \eqref{eq: Markov chain prob} is upper bounded by 
    \begin{equation}\label{eq: coarse grained probability}
    \sum_{\tau_{\Bar{\gamma}}\in \{0,1\}^{\Bar{\gamma}}} \pi_A(\tau_x) \prod_{(u,v)\in \Vec{E}_x(\gamma)}M(\tau_u,\tau_v)
\end{equation}
where the single-site marginals $\pi_A$ of $\mu_A$ are given in \eqref{def: single-site marg, diagonal elements}.
\end{lemma}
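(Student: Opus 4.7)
The plan is to prove the bound by induction on the size of $\bar\gamma=\gamma\cup\partial\gamma$, viewed as a finite subtree $T$ of $V$ rooted at $x$. The core ingredient is a one-edge estimate: for any $\sigma_x\in\Z_q$ and any $\tau_y\in\{0,1\}$,
\begin{equation}\label{eq:one-edge}
\sum_{\substack{\sigma_y\in\Z_q\\ \mathds{1}_{\sigma_y\in A}=\tau_y}} P_A(\sigma_x,\sigma_y)\,e^{t\mathds{1}_{\{x,y\}\in D_A(\sigma)}} \leq M(\tau_x,\tau_y),
\end{equation}
where $\tau_x=\mathds{1}_{\sigma_x\in A}$. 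I would establish this by a direct case check on $(\tau_x,\tau_y)\in\{0,1\}^2$. When $\tau_x=0$ or $\tau_y=0$, the edge $\{x,y\}$ is automatically in $D_A$, so the exponential reduces to $e^t$ and the sum collapses to $P_A(\sigma_x,A^c)\,e^t$ or $P_A(\sigma_x,A)\,e^t$, each bounded by the corresponding entry of $M$ thanks to the supremum in its definition. The case $(\tau_x,\tau_y)=(1,1)$ is the only one requiring care: one splits the diagonal term $\sigma_y=\sigma_x$ (regular, weight $P_A(\sigma_x,\sigma_x)$) from the off-diagonal summands $\sigma_y\in A\setminus\{\sigma_x\}$ (irregular, weight $P_A(\sigma_x,A\setminus\{\sigma_x\})\,e^t$), which matches $M(1,1)$ after taking the supremum over $a\in A$.

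To propagate this one-edge bound to the whole subtree, I would introduce
\[
\Phi(T;\sigma_x):=\sum_{\sigma_{T\setminus\{x\}}}\prod_{(u,v)\in\Vec{E}_x\cap E(T)} P_A(\sigma_u,\sigma_v)\,e^{t\mathds{1}_{\{u,v\}\in D_A(\sigma)}},
\]
and its coarse-grained analogue $\Psi(T;\tau_x)$, obtained by replacing $P_A$ with $M$ and the $\sigma$-variables with $\tau$-variables. The main inductive claim is
\[
\Phi(T;\sigma_x)\leq \Psi(T;\tau_x)\qquad\text{uniformly in }\sigma_x\text{ for fixed }\tau_x,
\]
proved by induction on $|T|$. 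The base case $T=\{x\}$ is trivial. For the inductive step, split $T$ at the root: let $y_1,\dots,y_m$ be its neighbors in $T$ and $T_i$ the subtree rooted at $y_i$. The product over $\Vec{E}_x\cap E(T)$ factorizes so that
\[
\Phi(T;\sigma_x)=\prod_{i=1}^m \Bigl(\sum_{\sigma_{y_i}\in\Z_q} P_A(\sigma_x,\sigma_{y_i})\,e^{t\mathds{1}_{\{x,y_i\}\in D_A(\sigma)}}\,\Phi(T_i;\sigma_{y_i})\Bigr).
\]
Applying the inductive hypothesis to each $\Phi(T_i;\sigma_{y_i})$, stratifying the $\sigma_{y_i}$-sum by the value of $\tau_{y_i}$, and then invoking \eqref{eq:one-edge} gives the factorwise bound $\sum_{\tau_{y_i}} M(\tau_x,\tau_{y_i})\,\Psi(T_i;\tau_{y_i})$, whose product over $i$ is precisely $\Psi(T;\tau_x)$.

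Finally, I would recover the lemma from the uniform bound $\Phi(\bar\gamma;\sigma_x)\leq\Psi(\bar\gamma;\tau_x)$ by multiplying by $\pi_A(\sigma_x)$ and summing over $\sigma_x\in\Z_q$; grouping the $\sigma_x$ with the same value of $\tau_x$ produces the coarse-grained single-site weight on the right-hand side of \eqref{eq: coarse grained probability}. The only genuine obstacle is the uniformity requirement in the one-edge estimate: the inductive step needs a bound that depends on $\sigma_x$ only through $\tau_x$, and this is precisely what forces the supremum over the level sets $\{a:\mathds{1}_{a\in A}=\tau_x\}$ in each entry of $M$. Without it, the induction would leave residual $\sigma$-dependence between levels of the tree that could not be absorbed into the coarse-grained variables, and the factorization across children of $x$ would break down.
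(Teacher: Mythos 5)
Your proof is correct, and at its core it is the same argument as the paper's: coarse-grain the state space, isolate a one-edge bound $\sum_{\sigma_y:\,\mathds{1}_{\sigma_y\in A}=\tau_y}P_A(\sigma_x,\sigma_y)e^{t\mathds{1}_{\{x,y\}\in D_A(\sigma)}}\leq M(\tau_x,\tau_y)$ that is uniform over the coarse-grained class of $\sigma_x$ (this is precisely what the paper uses in \eqref{eq: Induction start coarse grain 2}, \eqref{eq: Induction start coarse grain 3}, \eqref{eq: Induction step coarse grain 2}), and propagate it by induction over the subtree. The one real difference is organizational: the paper grows $\gamma$ one vertex at a time, carrying an explicit coarse-grained boundary condition $K_{\partial\gamma}\in\{A,A^c\}^{\partial\gamma}$ in its auxiliary inequality \eqref{eq: Induction hypothesis coarse grain}, whereas you recurse structurally at the root and factorize over the child subtrees $T_i$, so that the uniform-in-$\sigma_x$ bound $\Phi(T;\sigma_x)\le\Psi(T;\tau_x)$ is the only thing that needs to be propagated and the factorization across children is automatic. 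This is a slightly cleaner packaging of the same estimate; the paper's vertex-by-vertex scheme has the side benefit of exactly mirroring the induction reused later in the Propagation Lemma \ref{lem: Propagation lemma}.
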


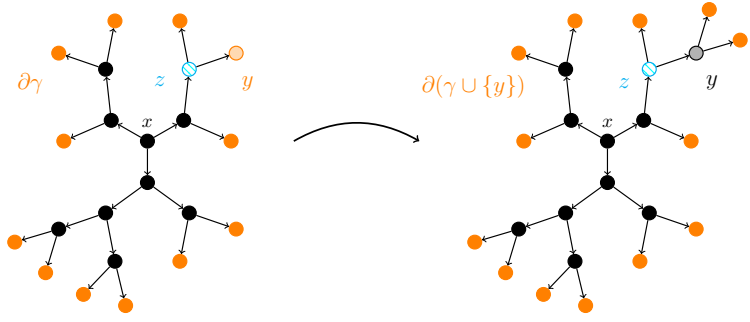
\begin{figure}[ht!]
    \centering
\scalebox{0.55}{
\begin{tikzpicture}[every label/.append style={scale=1.3}]
    
    \node[shape=circle,draw=black, fill=black, thick, minimum size=0.2cm, label=above:{$x$}] (A) at (0,0) {};
    
    \node[shape=circle,draw=black, fill=black,  thick, minimum size=0.2cm] (B) at (0,-1) {};
    \node[shape=circle,draw=black, fill=black,   thick, minimum size=0.2cm] (C) at (0.866,0.5) {};
    \node[shape=circle,draw=black, fill=black,  thick, minimum size=0.2cm] (D) at (-0.866,0.5) {};

    \node[shape=circle,draw=black, fill=black,thick, minimum size=0.2cm] (BA) at (1,-1.7321) {};
    \node[shape=circle,draw=black, fill=black,thick, minimum size=0.2cm] (BB) at (-1,-1.7321) {};

    \node[shape=circle,draw=orange, fill=orange,thick, minimum size=0.2cm] (BAA) at (2.1213,-2.1214) {};
    \node[shape=circle,draw=orange,  fill=orange, thick, minimum size=0.2cm] (BAB) at (0.7764,-2.8978) {};

    \node[shape=circle,draw=black, fill=black, thick, minimum size=0.2cm] (BBA) at (-0.7764,-2.8978) {};
    \node[shape=circle,draw=black, fill=black,  thick, minimum size=0.2cm] (BBB) at (-2.1213,-2.1214) {};

    \node[shape=circle,draw=orange, fill=orange,thick, minimum size=0.2cm] (BBAA) at (-0.522,-3.9658) {};
    \node[shape=circle,draw=orange, fill=orange,thick, minimum size=0.2cm] (BBAB) at (-1.5307,-3.6956) {};

    \node[shape=circle,draw=orange, fill=orange, thick, minimum size=0.2cm] (BBBA) at (-2.435,-3.1735) {};
    \node[shape=circle,draw=orange, fill=orange,thick, minimum size=0.2cm] (BBBB) at (-3.1734,-2.4352) {};

    \node[shape=circle,draw=orange, fill=orange, thick, minimum size=0.2cm] (CA) at (2,0) {};
    \node[shape=circle,draw=cyan, pattern=north west lines, pattern color=cyan, thick, minimum size=0.2cm] (CB) at (1,1.732) {};

    \node[minimum size=0.5cm, label=:{{\large\color{cyan}$z$}}] at (0.3,0.9212)  {};

    \node[shape=circle,draw=orange, fill=orange,  thick, minimum size=0.2cm] (CBA) at (0.7765,2.8977) {};
    \node[shape=circle,draw=orange,  fill=orange!30,  thick, minimum size=0.2cm] (CBB) at (2.1213,2.1212) {};

    \node[minimum size=0.5cm, label=:{{\large\color{orange}$y$}}] at (2.4,0.812)  {};

    \node[minimum size=0.5cm, label=:{\large{\color{orange}$\partial \gamma $}}] at (-2.8,0.7212)  {};

    \node[shape=circle,draw=orange, fill=orange, thick, minimum size=0.2cm] (DA) at (-2,0) {};
    \node[shape=circle,draw=black, fill=black, thick, minimum size=0.2cm] (DB) at (-1,1.732) {};

    \node[shape=circle,draw=orange, fill=orange, thick, minimum size=0.2cm] (DBA) at (-2.1213,2.1212) {};
    \node[shape=circle,draw=orange, fill=orange, thick, minimum size=0.2cm] (DBB) at (-0.7765,2.8977) {};

    \draw [->, thick] (A) -- (B);
    \draw [->, thick] (A) -- (C);
    \draw [->, thick, draw=black] (A) -- (D);

    \draw [->, thick] (B) -- (BA);
    \draw [->, thick] (B) -- (BB);

    \draw [->, thick, draw=black] (BA) -- (BAA);
    \draw [->, thick] (BA) -- (BAB);

    \draw [->, thick] (BB) -- (BBA);
    \draw [->, thick] (BB) -- (BBB);

    \draw [->, thick] (BBA) -- (BBAA);
    \draw [->, thick] (BBA) -- (BBAB);

    \draw [->, thick] (BBB) -- (BBBA);
    \draw [->, thick] (BBB) -- (BBBB);

    \draw [->, thick] (C) -- (CA);
    \draw [->, thick] (C) -- (CB);

    \draw [->, thick] (CB) -- (CBA);
    \draw [->, thick] (CB) -- (CBB);

    \draw [->, thick] (D) -- (DA);
    \draw [->, thick] (D) -- (DB);

    \draw [->, thick] (DB) -- (DBA);
    \draw [->, thick] (DB) -- (DBB);

    \draw[->, black, very thick] (3.5,0) to  [bend left] node [text width=2.5cm,midway,above=0.2cm, align=center] {} (6.5,0);

    \node[shape=circle,draw=black, fill=black, thick, minimum size=0.2cm, label=above:{$x$}] (W) at  (11,0) {};

    \node[shape=circle,draw=black, fill=black, thick, minimum size=0.2cm] (X) at (11,-1) {};
    \node[shape=circle,draw=black, fill=black, thick, minimum size=0.2cm] (Y) at (11.866,0.5) {};
    \node[shape=circle,draw=black, fill=black, thick, minimum size=0.2cm] (Z) at (10.134,0.5) {};

    \node[shape=circle,draw=black, fill=black, thick, minimum size=0.2cm] (XA) at (12,-1.7321) {};
    \node[shape=circle,draw=black, fill=black, thick, minimum size=0.2cm] (XB) at (10,-1.7321) {};

    \node[shape=circle,draw=orange, fill=orange, thick, minimum size=0.2cm] (ZA) at (9,0) {};
    \node[shape=circle,draw=black, fill=black, thick, minimum size=0.2cm] (ZB) at (10,1.732) {};

    \node[shape=circle,draw=orange, fill=orange, thick, minimum size=0.2cm] (ZBA) at (8.8787,2.1212) {};
    \node[shape=circle,draw=orange, fill=orange, thick, minimum size=0.2cm] (ZBB) at (10.2235,2.8977) {};

    \node[shape=circle,draw=orange, fill=orange,thick, minimum size=0.2cm] (XAA) at (13.1213,-2.1214) {};
    \node[shape=circle,draw=orange, fill=orange, thick, minimum size=0.2cm] (XAB) at (11.7764,-2.8978) {};

    \node[shape=circle,draw=black, fill=black, thick, minimum size=0.2cm] (XBA) at (10.2236,-2.8978) {};
    \node[shape=circle,draw=black, fill=black, thick, minimum size=0.2cm] (XBB) at (8.8787,-2.1214) {};

    \node[shape=circle,draw=orange, fill=orange, thick, minimum size=0.2cm] (XBAA) at (10.478,-3.9658) {};
    \node[shape=circle,draw=orange, fill=orange, thick, minimum size=0.2cm] (XBAB) at (9.4693,-3.6956) {};

    \node[shape=circle,draw=orange, fill=orange, thick, minimum size=0.2cm] (XBBA) at (8.565,-3.1735) {};
    \node[shape=circle,draw=orange, fill=orange, thick, minimum size=0.2cm] (XBBB) at (7.8266,-2.4352) {};

    \node[shape=circle,draw=orange, fill=orange, thick, minimum size=0.2cm] (YA) at (13,0) {};
    \node[shape=circle,draw=cyan, pattern=north west lines, pattern color=cyan, thick, minimum size=0.2cm] (YB) at (12,1.732) {};

     \node[minimum size=0.5cm, label=:{{\large\color{cyan}$z$}}] at (11.4,0.8712)  {};

    \node[shape=circle,draw=orange, fill=orange, thick, minimum size=0.2cm] (YBA) at (11.7765,2.8977) {};
    \node[shape=circle,draw=black, fill=black!30, thick, minimum size=0.2cm] (YBB) at (13.1213,2.1212) {};

     \node[minimum size=0.5cm, label=:{{\large\color{black}$y$}}] at (13.5,0.8212)  {};

    \node[minimum size=0.5cm, label=:{{\large\color{orange}$\partial (\gamma\cup \{y\})$}}] at (7.8,0.6212)  {};

    \node[shape=circle,draw=orange, fill=orange, thick, minimum size=0.2cm] (YBBA) at (13.435,3.1733) {};
    \node[shape=circle,draw=orange, fill=orange, thick, minimum size=0.2cm] (YBBB) at (14.1734,2.4349) {};

    \draw [->,thick] (W) -- (X);
    \draw [->,thick] (W) -- (Y);
    \draw [->,thick] (W) -- (Z);

    \draw [->,thick] (X) -- (XA);
    \draw [->,thick] (X) -- (XB);
    \draw [->,thick] (Z) -- (ZA);
    \draw [->,thick] (Z) -- (ZB);

    \draw [->,thick] (XA) -- (XAA);
    \draw [->,thick] (XA) -- (XAB);

    \draw [->,thick] (XB) -- (XBA);
    \draw [->,thick] (XB) -- (XBB);

    \draw [->,thick] (XBA) -- (XBAA);
    \draw [->,thick] (XBA) -- (XBAB);

    \draw [->,thick] (XBB) -- (XBBA);
    \draw [->,thick] (XBB) -- (XBBB);

    \draw [->,thick] (Y) -- (YA);
    \draw [->,thick] (Y) -- (YB);

    \draw [->,thick] (YB) -- (YBA);
    \draw [->,thick] (YB) -- (YBB);

    \draw [->,thick] (YBB) -- (YBBA);
    \draw [->,thick] (YBB) -- (YBBB);

     \draw [->,thick] (ZB) -- (ZBA);
    \draw [->,thick] (ZB) -- (ZBB);

    \end{tikzpicture}
}
\caption{Depicted are two directed finite subtrees of a binary tree with edges pointing away from a fixed vertex $x$. The left picture depicts a finite subtree $\gamma \subset V$ with vertices in black and one vertex $z\in \gamma$ is hatched and highlighted in blue. Moreover, the outer boundary $\partial \gamma$ is illustrated in orange (gray if you are looking at a
black and white version of this picture) where one of the vertices $y\in \partial \gamma$ which is a child of $z$ is highlighted with a light circle. The right subtree is an extended version of $\gamma$ where the outer vertex $y$ was added to $\gamma$. Hence it contains two new vertices in the boundary, the children of $y$}
\label{fig: Induction propagation lemma}

\end{figure}

\begin{proof}
    In order to prove this, we will show that
    \begin{align}\label{eq: Induction hypothesis coarse grain}
        \nonumber &\sum_{\sigma_{\gamma}\in (\Z_q)^\gamma}\pi_A(\sigma_x) \prod_{\substack{(u,v)\in \Vec{E}_x(\gamma)\\ v\in \gamma}}P_A(\sigma_u,\sigma_v)e^{t\mathds{1}_{\{u,v\}\in D_A(\sigma)}}\prod_{\substack{(u,v)\in \Vec{E}_x(\gamma)\\ v\notin \gamma}}\sum_{\sigma_v\in K_v}P(\sigma_u,\sigma_v)e^{t\mathds{1}_{\{u,v\}\in D_A(\sigma)}}\\
        &\leq \sum_{\tau_\gamma\in \{0,1\}^\gamma} \pi_A(\tau_x)\prod_{(u,v)\in \Vec{E}_x(\gamma)}M(\tau_u,\tau_v)
    \end{align}
    for all subtrees $\gamma\Subset V$ and all coarse-grained boundary conditions $K_{\partial\gamma}\in \{A,A^c\}^{\partial\gamma}$ with its corresponding family $\tau_{\partial\gamma}\in \{0,1\}^{\partial\gamma}$, i.e. 
    \begin{equation*}
        \tau_x=\begin{cases}
            1~~\text{if}~~K_x=A,\\
            0~~\text{if}~~K_x=A^c
        \end{cases}
    \end{equation*}
    for each $x\in \partial\gamma$. Note that \eqref{eq: coarse grained probability} follows directly from \eqref{eq: Induction hypothesis coarse grain} after the summation over all $K_{\partial\gamma}\in \{A,A^c\}^{\partial\gamma}$. 
    
    We will prove \eqref{eq: Induction hypothesis coarse grain} via an induction over the number of elements in the finite subtree $\gamma \subset V$. Note that if $|\gamma|=n\in \N$, there is a sequence of finite subtrees $(\gamma_i)_{i\in\{1,...,n\}}$ such that $\gamma_n=\gamma$, $\gamma_i \subset \gamma_{i+1}$ and $|\gamma_{i+1}\setminus \gamma_{i}|=1$ for all $i\in \{1,...,n-1\}$. One can obtain this sequence by starting with an arbitrary element $x\in\gamma$ and define $\gamma_1:=\{x\}$. If $\gamma_i$ is given for an $i\in \{1,...,n-1\}$, choose an $y\in \gamma\setminus \gamma_i$ such that there exists a $z\in \gamma_i$ with $z\sim y$ and define $\gamma_{i+1}:=\gamma_i \cup \{z\}$. An example of the step from a subtree $\gamma_{10}$ to a subtree $\gamma_{11}$ is illustrated in Figure \ref{fig: Induction propagation lemma}.

    \underline{Induction start:} ($|\gamma|=1)$ Assume that $\gamma=\{x\}$ and $K_{\partial \gamma}\in \{A,A^c\}^{\partial \gamma}$ with its corresponding family $\tau_{\partial \gamma}\in \{0,1\}^{\partial \gamma}$ is given. First of all, we can write the left hand side of \eqref{eq: Induction hypothesis coarse grain} as
    \begin{align}\label{eq: Induction start coarse grain 1}
        &\sum_{\sigma_x\in A}\pi_A(\sigma_x)\prod_{y\sim x} \sum_{\sigma_y\in K_y}P_A(\sigma_x,\sigma_y)e^{t\mathds{1}_{\{x,y\}\in D_A(\sigma)}}+\sum_{\sigma_x\in A^c}\pi_A(\sigma_x)\prod_{y\sim x} \sum_{\sigma_y\in K_y}P_A(\sigma_x,\sigma_y)e^{t}
    \end{align}
    where we split the sum over $\sigma_x$. Note that 
    \begin{equation}\label{eq: Induction start coarse grain 2}
        \sum_{\sigma_y\in K_y}P_A(\sigma_x,\sigma_y)e^{t\mathds{1}_{\{x,y\}\in D_A(\sigma)}}\leq M(1,0)\mathds{1}_{K_y=A^c}+M(1,1)\mathds{1}_{K_y=A}
    \end{equation}
    for all $\sigma_x\in A$ and $y\sim x$ as well as 
     \begin{equation}\label{eq: Induction start coarse grain 3}
        \sum_{\sigma_y\in K_y}P_A(\sigma_x,\sigma_y)e^{t}\leq M(0,0)\mathds{1}_{K_y=A^c}+M(0,1)\mathds{1}_{K_y=A}
    \end{equation}
    for all $\sigma_x\in A^c$ and $y\sim x$. Combining \eqref{eq: Induction start coarse grain 2} and \eqref{eq: Induction start coarse grain 3}  gives us 
    \begin{equation*}
        \sum_{\sigma_x\in A}\pi_A(\sigma_x)\prod_{y\sim x}M(1,\tau_y)+\sum_{\sigma_x\in A^c}\pi_A(\sigma_x)\prod_{y\sim x}M(0,\tau_y)
    \end{equation*}
    as an upper bound for \eqref{eq: Induction start coarse grain 1} and hence the statement of \eqref{eq: Induction hypothesis coarse grain} for $|\gamma|=1$.

    \underline{Induction step:} Let us assume that \eqref{eq: Induction hypothesis coarse grain} is true for a subtree $\gamma \Subset V$ with $|\gamma|\in \N$ and assume that $y\in \partial \gamma$. Define $\gamma_0:=\gamma\cup \{y\}$ and let $K_{\partial\gamma_0}\in \{A,A^c\}^{\partial\gamma_0}$ together with its corresponding coarse-grained family $\tau_{\partial\gamma_0}\in \{0,1\}^{\partial\gamma_0}$, compare Figure \ref{fig: Induction propagation lemma}. Then, we can rewrite the expression in \eqref{eq: Induction hypothesis coarse grain} as 
    \begin{align}\label{eq: Induction step coarse grain 1}
        \nonumber &\sum_{\sigma_{\gamma_0}\in (\Z_q)^{\gamma_0}}\pi_A(\sigma_x)\prod_{\substack{(u,v)\in \Vec{E}_x(\gamma_0)\\ v\in \gamma_0}}P_A(\sigma_u,\sigma_v)e^{t\mathds{1}_{\{u,v\}\in D_A(\sigma)}}\\
        &\times\prod_{\substack{(u,v)\in \Vec{E}_x(\gamma_0)\\ v\notin \gamma_0,~u\neq y}}\sum_{\sigma_v\in K_v}P_A(\sigma_u,\sigma_v)e^{t\mathds{1}_{\{u,v\}\in D_A(\sigma)}}\prod_{\substack{\Tilde{y}\sim y\\ \Tilde{y}\neq z}}\sum_{\sigma_{\Tilde{y}}\in K_{\Tilde{y}}}P_A(\sigma_y,\sigma_{\Tilde{y}})e^{t\mathds{1}_{\{y,\Tilde{y}\}\in D_A(\sigma)}}
    \end{align}
    by separating the terms corresponding to the directed edges starting in $y$. Here, $z$ is the unique nearest neighbor of $y$ laying inside of $\gamma_0$, see Figure \ref{fig: Induction propagation lemma}. We can upper bound each factor corresponding to a vertex $\Tilde{y}\in \partial\{y\}\setminus \{z\}$ as follows
    \begin{equation}\label{eq: Induction step coarse grain 2}
        \sum_{\sigma_{\Tilde{y}}\in K_{\Tilde{y}}}P_A(\sigma_y,\sigma_{\Tilde{y}})e^{t\mathds{1}_{\{y,\Tilde{y}\}\in D_A(\sigma)}}\leq M(1,\tau_{\Tilde{y}})\mathds{1}_{\sigma_y\in A}+M(0,\tau_{\Tilde{y}})\mathds{1}_{\sigma_y\in A^c}
    \end{equation}
    by considering the different coarse-grained possibilities of $\sigma_y$. Using the bound of \eqref{eq: Induction step coarse grain 2} in \eqref{eq: Induction step coarse grain 1} gives us 
    \begin{align}\label{eq: Induction step coarse grain 3}
        \nonumber &\sum_{\sigma_{\gamma}\in (\Z_q)^{\gamma}}\pi_A(\sigma_x)\prod_{\substack{(u,v)\in \Vec{E}_x(\gamma)\\ v\in \gamma}}P_A(\sigma_u,\sigma_v)e^{t\mathds{1}_{\{u,v\}\in D_A(\sigma)}} \prod_{\substack{(u,v)\in \Vec{E}_x(\gamma_0)\\ v\notin \gamma_0,~u\neq y}}\sum_{\sigma_v\in \eta_v}P_A(\sigma_u,\sigma_v)e^{t\mathds{1}_{\{u,v\}\in D_A(\sigma)}}\\
        &\times\left(\sum_{\sigma_y\in A}P_A(\sigma_z,\sigma_y)e^{t\mathds{1}_{\{z,y\}\in D_A(\sigma)}}\prod_{\substack{\Tilde{y}\sim y\\ \Tilde{y}\neq z}}M(1,\tau_{\Tilde{y}})+\sum_{\sigma_y\in A^c}P_A(\sigma_z,\sigma_y)e^{t}\prod_{\substack{\Tilde{y}\sim y\\ \Tilde{y}\neq z}}M(0,\tau_{\Tilde{y}})\right)
    \end{align}
    where we additionally separated the sum over $\sigma_y$. Afterwards, we apply the induction hypothesis \eqref{eq: Induction hypothesis coarse grain} for the subtree $\gamma$. For the first term, we use it in the case of $K_{\partial\gamma}\in \{A,A^c\}^{\partial\gamma}$ where $K_y=A$ and for the second term in the case where $K_y=A^c$. This results in the following upper bound for \eqref{eq: Induction step coarse grain 1} 
    \begin{equation*}
        \sum_{\tau_\gamma\in \{0,1\}^\gamma}\pi_A(\tau_x)\prod_{\substack{(u,v)\in \Vec{E}_x(\gamma)\\ v\neq y}}M(\tau_u,\tau_v)\left(M(\tau_z,1)\prod_{\substack{\Tilde{y}\sim y\\ \Tilde{y}\neq z}}M(1,\tau_{\Tilde{y}})+M(\tau_z,0)\prod_{\substack{\Tilde{y}\sim y\\ \Tilde{y}\neq z}}M(0,\tau_{\Tilde{y}})\right)
    \end{equation*}
    and consequently the statement of \eqref{eq: Induction hypothesis coarse grain} follows for $\gamma_0$. This ends the proof of Lemma \ref{lem: Upper bound coarse grain}.
\end{proof}

\subsection{Propagation of smallness}\label{Subsec: Propagation of smallness}

Let us proceed with the proof of \eqref{eq: ineq mu_A key lemma} in Lemma \ref{lem: Key Lemma}. We can achieve upper bounds for the entries of $M$ and $\pi_A$ related to the inverse temperature with the results of Proposition \ref{prop: Bounds A-loc. states}. These bounds are summarized in the following lemma and the proof can be found in \hyperref[Sec: Appendix C]{Appendix C}.

\begin{lemma}\label{lem: Bounds for M}
Let $\beta$ be large enough. Then, the single-site marginal of $\mu_A$, which was defined in \eqref{def: single-site marg, diagonal elements}, is bounded as follows
\begin{equation}\label{eq: Bounds for pi_A}
    \pi_A(A^c)\leq C_1 e^{-(d+1)\beta u}~~\text{and}~~\pi_A(A)\leq 1
\end{equation}
where $C_1$ is the positive constant given in Proposition \ref{prop: Bounds A-loc. states}. Moreover, the entries of the matrix $M$, introduced in \eqref{eq: Matrix M}, can be upper bounded in the following way
\begin{gather}
    \nonumber M(1,1)\leq C_3 e^{-\beta u+t}+1,~~M(1,0)\leq C_3 e^{-(d+1)\beta u+t}\\
    M(0,1)\leq  e^{t}~~\text{and}~~M(0,0)\leq C_3 e^{-\beta((d+1)u-U)+t}+C_1 e^{-(d-1)\beta u+t}.\label{eq: Bounds for M}
\end{gather}
The positive constant $C_3$ was introduced in Lemma \ref{lem: Key Lemma}.
\end{lemma}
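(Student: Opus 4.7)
The bounds $\pi_A(A^c) \leq C_1 e^{-(d+1)\beta u}$ and $\pi_A(A) \leq 1$ are immediate: the first is exactly $(iii)$ of Proposition \ref{prop: Bounds A-loc. states} and the second is the fact that $\pi_A$ is a probability measure. These require no further proof, so the content of Lemma \ref{lem: Bounds for M} is really about the four entries of $M$.

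For the matrix $M$, the plan is to treat its entries in order of increasing difficulty. The bound $M(0,1) \leq e^t$ is trivial from $P_A(a, A) \leq 1$. For $M(1,0) = \sup_{a \in A} P_A(a, A^c) e^t$, I exploit reversibility of the spatially homogeneous tree-indexed Markov chain $\mu_A$, i.e. $\pi_A(a) P_A(a,b) = \pi_A(b) P_A(b,a)$; summing over $b \in A^c$ and bounding $P_A(b,a) \leq 1$ yields $P_A(a, A^c) \leq \pi_A(A^c)/\pi_A(a)$. Inserting the bounds $(iii)$ and $(iv)$ of Proposition \ref{prop: Bounds A-loc. states} produces the required exponent $e^{-(d+1)\beta u}$ and, after absorbing the $1/(1 - O(e^{-\beta u}))$ factor into a constant for large $\beta$, gives the announced $C_3 = 2(q-1)$. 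For $M(1,1)$ I combine stochasticity $P_A(a, A \setminus \{a\}) = 1 - \Delta(a) - P_A(a, A^c)$ with $(ii)$, the $M(1,0)$ bound just derived, and the trivial $P_A(a,a) \leq 1$ summand giving the additive $+1$.

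The delicate entry is $M(0,0)$, which I split into a diagonal part $\Delta(a) e^t$ and an off-diagonal part $P_A(a, A^c \setminus \{a\}) e^t$ for $a \in A^c$. The diagonal piece is bounded by $(i)$ of Proposition \ref{prop: Bounds A-loc. states} (using $\|\cdot\|_\infty \leq \|\cdot\|_{(d+1)/(d-1)}$), giving the $C_1 e^{-(d-1)\beta u + t}$ summand. For the off-diagonal piece I again invoke reversibility: $P_A(a,b) = \pi_A(b) P_A(b,a)/\pi_A(a)$. The numerator is controlled by $\pi_A(A^c) \leq C_1 e^{-(d+1)\beta u}$ from $(iii)$, while a lower bound $\pi_A(a) \gtrsim e^{-\beta U}$ for $a \in A^c$ is extracted by writing $\pi_A(a) = \sum_b \pi_A(b) P_A(b, a)$ and keeping one contribution from a neighbor $b \in A$, whose worst-case transition cost into $A^c$ is $e^{-\beta U}$. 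Combining these produces the exponent $(d+1)u - U$ with the announced constant $C_3$.

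The main obstacle is this last estimate. One has to simultaneously track the upper bound on $\pi_A(A^c)$ and a matching lower bound on $\pi_A(a)$ for individual $a \in A^c$, and the only reason the latter does not collapse the argument is the $u,U,d$-condition \eqref{eq: u,U-bounds}: the inequality $(d^2+1)u > d U$ implies $(d+1)u > U$ for every $d \geq 2$, so that the exponent $(d+1)u - U$ is strictly positive and the $M(0,0)$ bound is genuinely small for large $\beta$. Once this positivity is noted, the remainder is routine arithmetic verification of the stated constants, with all inputs contained in Proposition \ref{prop: Bounds A-loc. states}, the explicit form $Q(i) = e^{-\beta \bar u(i)}$ of the transfer operator, and the inequalities in \eqref{eq: Bound variable epsilon}.
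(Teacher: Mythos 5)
The claim $\pi_A(a)\gtrsim e^{-\beta U}$ for $a\in A^c$, which you use to control $M(0,0)$, is false, and the gap cannot be repaired within your approach. To see this, note that the boundary-law relation behind \eqref{eq: Transition matrix} gives $\pi_A(a)\propto u_A(a)^{(d+1)/d}$ with $u_A(a)=c\,(Q*u_A)(a)^d$; for $a\in A^c$ one only has $(Q*u_A)(a)\gtrsim e^{-\beta U}$, so the best generic lower bound is $\pi_A(a)\gtrsim e^{-(d+1)\beta U}$ (and indeed for the Potts model one computes $\pi_A(a)\asymp e^{-(d+1)\beta J}$ on $A^c$, consistent with $(iii)$). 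With the correct order, the reversibility inequality $P_A(a,A^c\setminus\{a\})\leq \pi_A(A^c)/\pi_A(a)$ only yields an exponent of the form $(d+1)u-(d+1)U\leq 0$, not the required $(d+1)u-U>0$. In fact since all of $A^c$ carries the same small order of stationary weight, $\pi_A(A^c)/\pi_A(a)$ is essentially the constant $|A^c|$ and the reversibility route degenerates to a trivial bound. The smallness of $P_A(a,A^c)$ for $a\in A^c$ is a statement about the \emph{transition} dynamics, not the stationary measure, and has to be extracted from the structure of the kernel itself: the paper uses the explicit formula \eqref{eq: Transition matrix}, H\"older in the numerator, and the crucial lower bound $(Q*\pi_A^{d/(d+1)})(a)\geq Q(a-\tilde a)\pi_A(\tilde a)^{d/(d+1)}\gtrsim e^{-\beta U}$ with $\tilde a\in A$ in the denominator, which is precisely where the single factor $e^{+\beta U}$ (and hence the exponent $(d+1)u-U$) comes from.

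Your treatment of the remaining entries is sound, albeit different from the paper. Reversibility $\pi_A(a)P_A(a,b)=\pi_A(b)P_A(b,a)$ does hold here (it follows from the boundary-law representation together with the symmetry $Q(i-j)=Q(j-i)$), and using it for $M(1,0)$ with $a\in A$, where $\pi_A(a)\asymp 1/|A|$ is bounded away from zero, gives the correct exponent $(d+1)u$; the combination of stochasticity with $(ii)$ for $M(1,1)$, the trivial bound for $M(0,1)$, and $(i)$ for the diagonal piece of $M(0,0)$ also produce the right exponents. But the off-diagonal piece of $M(0,0)$ — the entry you yourself flagged as "delicate" and where the $u,U,d$-assumption first does real work — is exactly where your argument fails, and there I see no way around working directly with the explicit transition kernel as the paper does.
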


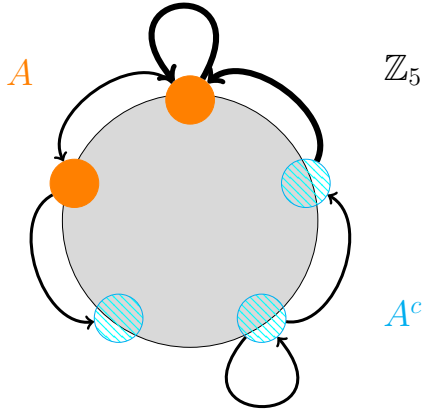
\begin{figure}[ht!]
\centering
\scalebox{0.8}{
\begin{tikzpicture}[every label/.append style={scale=1}]

 \node[shape=circle,draw=orange, fill=orange,   minimum size=0.8cm] (0) at (0,2) {};
 \node[shape=circle,draw=orange, fill=orange,  minimum size=0.8cm] (1) at (-1.9021130325903071442328786667587642868113972682515004448946112888,0.6180339887498948482045868343656381177203091798057628621354486226) {};
 
 \node[shape=circle,draw=cyan, pattern=north west lines, pattern color=cyan,  minimum size=0.8cm] (2) at (-1.1755705045849462583374119092781455371953048752862919821445449613,-1.6180339887498948482045868343656381177203091798057628621354486226) {};
 \node[shape=circle,draw=cyan,  pattern=north west lines, pattern color=cyan,minimum size=0.8cm] (3) at (1.1755705045849462583374119092781455371953048752862919821445449613,-1.6180339887498948482045868343656381177203091798057628621354486226) {};
 \node[shape=circle,draw=cyan, pattern=north west lines, pattern color=cyan,  minimum size=0.8cm] (4) at (1.9021130325903071442328786667587642868113972682515004448946112888,0.6180339887498948482045868343656381177203091798057628621354486226) {};

\draw[->,bend angle=80, bend right,line width=0.05cm] (0) to (1);
\draw[->,bend angle=80, bend right,line width=0.05cm] (1) to (2);
\draw[->,bend angle=80, bend right,line width=0.05cm] (3) to (4);
\draw[->,bend angle=80, line width=0.1cm, bend right] (4) to (0);

\draw[->,loop above, distance=2cm, in=130,out=60, line width=0.1cm] (0) to (0);
\draw[->,loop below, distance=2cm, in=-50,out=-130, line width=0.05cm] (3) to (3);

\begin{scope}[on background layer]
\node[shape=circle,draw=black, fill=black!15,  minimum size=4.2cm] (6) at (0,0) {};
\end{scope}


\node[label=:{\LARGE$\Z_5$}] () at (3.5,2) {};

\node[label=:{{\LARGE\color{orange}$A$}}] () at (-2.8,2) {};

\node[label=:{{\LARGE\color{cyan}$A^c$}}] () at (3.5,-2) {};

\end{tikzpicture}
}
\caption{An illustration of the coarse-grained state space $\Z_5$ where the localization set $A$ consists of two elements. The localization set $A$ is colored in full orange and its complement is hatched and colored in blue.  Moreover, the behavior of the transition matrix $P_A$ is visualized with arrows where transitions are more probable if the arrows are thicker}
\label{fig: coarse grained state space}
\end{figure}

Exploiting this lemma, we give an upper bound for the expression in \eqref{eq: coarse grained probability} of the coarse-grained process. 

\begin{lemma}[Propagation lemma]\label{lem: Propagation lemma}
   Let $\beta$ be large enough. Then, we have the following inequality
   \begin{equation*}
       \mu_A(e^{t|D_A(\sigma)\cap E(\gamma)|})\leq 2C_1\big(f(\beta)e^t+1\big)^{1+d|\gamma|}
   \end{equation*}
    for each finite subtree $\gamma\subset V$. Here $f(\beta):=(2C_3+C_1)e^{-c\beta}$ with $c=\min\{\frac{d^2+1}{d}u-U,\frac{1}{d}\}$, $C_1$ is the positive constant given in Proposition \ref{prop: Bounds A-loc. states} and $C_3$ is the positive constant given in Lemma \ref{lem: Key Lemma}.
\end{lemma}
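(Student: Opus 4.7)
I would derive the lemma by chaining three ingredients: the coarse-graining of Lemma~\ref{lem: Upper bound coarse grain}, the entry-wise estimates of Lemma~\ref{lem: Bounds for M}, and an induction through the tree structure of $\gamma$. First, Lemma~\ref{lem: Upper bound coarse grain} reduces the $\Z_q$-valued expectation to the $\{0,1\}$-valued partition function
\[
\mu_A\!\left(e^{t|D_A(\sigma)\cap E(\gamma)|}\right)\le\sum_{\tau_{\bar\gamma}\in\{0,1\}^{\bar\gamma}}\pi_A(\tau_x)\prod_{(u,v)\in\Vec{E}_x(\gamma)}M(\tau_u,\tau_v),
\]
into which I would insert the estimates of Lemma~\ref{lem: Bounds for M}: all entries of $M$ except $M(0,1)\le e^t$ carry an exponential $\beta$-smallness (schematically $M(1,1)\le 1+\mathrm{small}\cdot e^t$ and $M(1,0),M(0,0)\le\mathrm{small}\cdot e^t$), $\pi_A(1)\le 1$, and crucially $\pi_A(0)\le C_1 e^{-(d+1)\beta u}$. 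The particular choice $c=\min\{\tfrac{d^2+1}{d}u-U,\tfrac{1}{d}\}$ and $f(\beta)=(2C_3+C_1)e^{-c\beta}$ is engineered so as to upper bound these various decay rates in a single common form.

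Next, I would split the coarse-grained sum according to $\tau_x\in\{0,1\}$ and treat the two summands separately. Using the tree-structure product recursion
\[
\Phi_v(\tau_v)=\prod_{w\,\text{child of }v}\bigl[M(\tau_v,0)\Phi_w(0)+M(\tau_v,1)\Phi_w(1)\bigr],
\]
where $\Phi_v(\tau_v)$ is the partial coarse-grained sum over the subtree $\gamma_v\subset\gamma$ rooted at $v$, I would prove inductively on $|\gamma_v|$ that $\Phi_v(1)\le(1+f(\beta)e^t)^{d|\gamma_v|}$ together with a compatible growth estimate for $\Phi_v(0)$, adding one boundary vertex at a time as in the proof of Lemma~\ref{lem: Upper bound coarse grain}. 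The $\tau_v=1$-step relies on absorbing the dangerous term $M(1,0)\Phi_w(0)$ into the slack $f(\beta)e^t$, using the $e^{-(d+1)\beta u}$-factor in $M(1,0)$, so that each added vertex contributes exactly $(1+f(\beta)e^t)^d$ to the bound; iterating through the $d+1$ children of the root gives $\Phi_x(1)\le(1+f(\beta)e^t)^{1+d|\gamma|}$. For the $\tau_x=0$ contribution at the root, even though $\Phi_x(0)$ may a priori grow as $e^{(1+d|\gamma|)t}$, it only enters weighted by $\pi_A(0)\le C_1e^{-(d+1)\beta u}$, and in the regime $t=O(\beta)$ in which the lemma will be applied (cf.\ the optimisation in Subsection~\ref{subsec: Computation of a minimizer}) this prefactor compensates the excess growth, producing a contribution bounded by $C_1(1+f(\beta)e^t)^{1+d|\gamma|}$. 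Summing the two root contributions yields the claimed $2C_1(1+f(\beta)e^t)^{1+d|\gamma|}$.

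\textbf{Main obstacle.} The heart of the proof is the asymmetry of the coarse-grained matrix $M$: the row $M(0,\cdot)$ carries no $\beta$-smallness, so configurations in which many vertices sit in state $0$ produce essentially uncontrolled $e^t$-factors. The role of the $u,U,d$-condition \eqref{eq: u,U-bounds} is precisely to guarantee that the only way to reach a state-$0$ vertex is either through a transition with the small factor $M(1,0)\sim e^{-(d+1)\beta u+t}$ in the interior of the subtree or, at the root, through the small marginal $\pi_A(0)\sim e^{-(d+1)\beta u}$. Tracking these compensations edge by edge in the induction and choosing $c$ just small enough that every intermediate absorption step closes without spoiling the final exponential rate is what fixes the explicit form of $c=\min\{\tfrac{d^2+1}{d}u-U,\tfrac{1}{d}\}$ and the constant $2C_1$.
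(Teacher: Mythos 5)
Your plan follows the paper's strategy in outline (reduce to the coarse-grained sum via Lemma~\ref{lem: Upper bound coarse grain}, insert the matrix bounds of Lemma~\ref{lem: Bounds for M}, induct on $|\gamma|$), but the induction you describe does not close. The obstruction is exactly the quantity you leave vague: the ``compatible growth estimate for $\Phi_v(0)$''. In your own $\tau_v=1$ step you must absorb $M(1,0)\Phi_w(0)$ into the slack $f(\beta)e^t$; yet a few lines later you concede that $\Phi_x(0)$ ``may a priori grow as $e^{(1+d|\gamma|)t}$''. If $\Phi_w(0)$ can have that kind of growth at an interior vertex $w$, then $M(1,0)\Phi_w(0)\lesssim e^{-(d+1)\beta u+t}\cdot e^{(1+d|\gamma_w|)t}$ cannot be absorbed into $(1+f(\beta)e^t)^d$: the single factor $e^{-(d+1)\beta u}$ from $M(1,0)$ does not grow with $|\gamma_w|$, while the excess power of $e^t$ does. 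The same problem reappears at the root: $\pi_A(0)\le C_1e^{-(d+1)\beta u}$ is one small constant, not raised to any $|\gamma|$-dependent power, so it cannot compensate an excess that scales with $|\gamma|$. Appealing to ``the regime $t=O(\beta)$'' is also not legitimate here: the lemma is used inside $\inf_{t\ge 0}$ in \eqref{eq: application exp Markov}, so the bound must hold uniformly in $t$.

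What actually closes the induction in the paper is a finer invariant that carries, for each boundary vertex $y$, a weight $g(\tau_y)$ distinguishing the two coarse-grained states, namely
\begin{equation*}
g(0)=C_3e^{-\beta((d+1+\frac{1}{d})u-U)+t}+C_1e^{-(d-1+\frac{1}{d})\beta u+t},
\qquad
g(1)=C_3e^{-\beta\frac{u}{d}+t}+1,
\end{equation*}
and the claim \eqref{eq: Claim Induction} bounds the partial sum by $2C_1(f(\beta)e^t+1)^{|\gamma|-1}\prod_{y\in\partial\gamma}g(\tau_y)$. The point you are missing is the splitting of the smallness: whenever a boundary vertex $y$ with $\tau_y=0$ (or $1$) is absorbed into the subtree, the $e^{-\beta u}$ originating from $M(1,0)$ or from $\pi_A(0)$ is distributed as $e^{-\beta u/d}$ over the $d$ new children of $y$, so that each new boundary vertex again carries a small $g$-weight — this is why $\frac{u}{d}$ appears in $g$, and why $c$ contains the term $\frac{1}{d}$. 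Without this redistribution the ``state $0$'' branch of the recursion has no $\beta$-smallness to pass on, and the invariant breaks on the very next step. Summing $\prod_{y\in\partial\gamma}(g(0)+g(1))$ with $|\partial\gamma|=2+(d-1)|\gamma|$ then gives the exponent $1+d|\gamma|$ and the stated prefactor. You should replace your two-sided but unspecified bound on $(\Phi_v(0),\Phi_v(1))$ by this explicit $g$-weighted invariant, carry out the induction by adjoining one boundary vertex at a time, and verify the absorption step \eqref{eq: Bound induction 4}--\eqref{eq: Bound induction 6} using the $u,U,d$-condition to ensure $(d+1)u-U>0$ so that $g(0)$ indeed stays small.
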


 \begin{remark}
     Note that the form of the $d$-dependent exponents in \eqref{eq: Bounds for M} is important for the proof of the Propagation Lemma.  As in the proof of Lemma \ref{lem: Upper bound coarse grain}, this proof is based on an induction over the elements of a finite subtree $\gamma$. The idea is now to propagate small parts of the expression for a given finite subtree $\gamma \subset V$ over the bad $A$-irregular edges after adding a new vertex to $\gamma$. This is the reason for the terms $\frac{1}{d}$ in the constant $c$ because in each step, we split the term $e^{-\beta u}$ uniformly over the $d$ children of a given vertex, as we will see. 
 \end{remark}

  \begin{proof}[Proof of Lemma \ref{lem: Propagation lemma}]
  We will show that 
\begin{gather} \label{eq: Claim Induction}
    \sum_{\tau_{\gamma}\in \{0,1\}^{\gamma}}\pi_A(\tau_x)\prod_{(u,v)\in \Vec{E}_x(\gamma)}M(\tau_u,\tau_v) \leq 2C_1 \big(f(\beta)e^t+1\big)^{|\gamma|-1}\prod_{y\in\partial \gamma}g(\tau_y)
\end{gather}
holds for all finite subtrees $\gamma \subset V$ with $x\in \gamma$ and $\tau_{\partial\gamma}\in \{0,1\}^{\partial\gamma}$. Here, the function $g:\{0,1\}\rightarrow \R$ is defined as
\begin{equation*}
    g(0):=C_3e^{-\beta((d+1+\frac{1}{d})u-U)+t}+C_1e^{-(d-1+\frac{1}{d})\beta u+t}~~\text{and}~~g(1):=C_3e^{-\beta \frac{u}{d}+t}+1.
\end{equation*} 
Consequently, summing \eqref{eq: Claim Induction} up over $\tau_{\partial \gamma}\in \{0,1\}^{\partial\gamma}$ leads to
\begin{gather*}
   \sum_{\tau_{\Bar{\gamma}}\in \{0,1\}^{\Bar{\gamma}}}\pi_A(\tau_x)\prod_{(u,v)\in \Vec{E}_x(\gamma)}M(\tau_u,\tau_v) \leq 2C_1\big(f(\beta)e^t+1\big)^{|\gamma|-1}\bigg(g(0)+g(1)\bigg)^{|\partial \gamma|}.
\end{gather*}
Note that the relation $|\partial \gamma|=2+|\gamma|(d-1)$ is valid for all finite subtrees $\gamma \Subset V$ of a $d$-regular tree. Together with the fact that $U\geq u$, we obtain the statement of Lemma \ref{lem: Propagation lemma}. As mentioned before, we will prove \eqref{eq: Claim Induction} via an induction over the elements in a finite subtree $\gamma\Subset V$.
  
      \underline{Induction start:} ($|\gamma|=1$) Assume that $\gamma=\{x\}$ and $\tau_{\partial{\gamma}}\in \{0,1\}^{\partial{\gamma}}$ is given. On the one hand, we have that
    \begin{equation*}
        \pi_A(A^c) \prod_{ y\sim x}M(0,\tau_y)\leq C_1e^{-(d+1)\beta u} \prod_{y\sim x}\bigg((C_3e^{-\beta((d+1)u-U)+t}+C_1e^{-(d-1)\beta u+t})\mathds{1}_{\tau_y=0}+e^{t}\mathds{1}_{\tau_y=1}\bigg)
    \end{equation*}
    where we used the bounds in \eqref{eq: Bounds for pi_A} and \eqref{eq: Bounds for M}.
    Splitting the term $e^{-\beta(d+1)u}$ over the $(d+1)$ nearest neighbors of $x$ leads to
    \begin{equation}\label{eq: Bound induction 1}
    C_1 \prod_{y\sim x}\bigg((C_3e^{-\beta((d+2)u-U)+t}+C_1e^{-d\beta u+t})\mathds{1}_{\tau_y=0}+e^{-\beta u+t}\mathds{1}_{\tau_y=1}\bigg).
    \end{equation}
    On the other hand, 
    \begin{equation}\label{eq: Bound induction 2}
        \pi_A(A) \prod_{y\sim x}M(1,\tau_y)\leq \prod_{y\sim x}\bigg(C_3e^{-(d+1)\beta u+t}\mathds{1}_{\tau_y=0}+(C_3e^{-\beta u+t}+1)\mathds{1}_{\tau_y=1}\bigg)
    \end{equation}
    with the bounds in \eqref{eq: Bounds for pi_A} and \eqref{eq: Bounds for M}. Combining \eqref{eq: Bound induction 1} and \eqref{eq: Bound induction 2} results in
    \begin{align*}
        &\sum_{\tau_x\in \{0,1\}}\pi_A(\tau_x) \prod_{y\sim x}M(\tau_x,\tau_y)\\
        &\leq 2C_1\prod_{y\sim x}\bigg((C_3e^{-\beta((d+1+\frac{1}{d})u-U)+t}+C_1e^{-(d-1+\frac{1}{d})\beta u+t})\mathds{1}_{\tau_y=0}+(C_3e^{-\beta \frac{u}{d}+t}+1)\mathds{1}_{\tau_y=1}\bigg)
    \end{align*}
    where we used the relation $0<u\leq U$ and the fact that $C_1>1$ as well as $C_3>1$, see Proposition \ref{prop: Bounds A-loc. states} and Lemma \ref{lem: Key Lemma}. This is inequality \eqref{eq: Claim Induction} for $|\gamma|=1$.

    \underline{Induction step:} Let us assume that \eqref{eq: Claim Induction} holds for a finite subtree $\gamma\Subset V$ with $|\gamma|=n$ and for all $\tau_{\partial\gamma}\in \{0,1\}^{\partial\gamma}$. Now, let $y\in \partial \gamma$ and $\tau_{\partial(\gamma\cup \{y\})}\in \{0,1\}^{\partial(\gamma\cup \{y\})}$ be an arbitrary boundary condition, see Figure \ref{fig: Induction propagation lemma}. Note that
   \begin{align*}
       &\sum_{\tau_{\gamma\cup \{y\}}\in \{0,1\}^{\gamma\cup \{y\}}}\pi_A(\tau_x)\prod_{(u,v)\in \Vec{E}_x(\gamma\cup \{y\})}M(\tau_u,\tau_v)\\
       &=\sum_{\tau_y\in \{0,1\}} \sum_{\tau_\gamma \in \{0,1\}^\gamma} \pi_A(\tau_x)\prod_{(u,v)\in \Vec{E}_x(\gamma)}M(\tau_u,\tau_v) \prod_{\substack{z\in \partial \{y\}\\z\notin \gamma}}M(\tau_y,\tau_z).
   \end{align*}
   Using the induction hypothesis for both cases $\tau_y\in \{0,1\}$, gives us the following upper bound
   \begin{align}\label{eq: Bound induction 3}
       2C_1\big(f(\beta)e^t+1\big)^{|\gamma|-1}\prod_{\Tilde{y}\in\partial \gamma\setminus \{y\}}g(\tau_{\Tilde{y}}) \sum_{\tau_y\in \{0,1\}}g(\tau_y)\prod_{\substack{z\in \partial y\\z\notin \gamma}}M(\tau_y,\tau_z)
   \end{align}
   where we have separated the factor corresponding to $y$. Let us consider the terms depending on $y$, which are explicitly 
   \begin{equation}\label{eq: Bound induction 4}
      (C_3e^{-\beta((d+1+\frac{1}{d})u-U)+t}+C_1e^{-(d-1+\frac{1}{d})\beta u+t})\prod_{\substack{z\in \partial y\\z\notin \gamma}}M(0,\tau_z)+(C_3e^{-\beta \frac{u}{d}+t}+1)\prod_{\substack{z\in \partial y\\z\notin \gamma}}M(1,\tau_z).
   \end{equation}
The first term can be upper bounded by 
\begin{align}\label{eq: Bound induction 5}
    \nonumber&(C_3e^{-\beta((d+\frac{1}{d})u-U)+t}+C_1e^{-(d-2+\frac{1}{d})\beta u+t})\\
    &\times\prod_{\substack{z\in \partial y\\z\notin \gamma}}\bigg((C_3e^{-\beta((d+1+\frac{1}{d})u-U)+t}+C_1e^{-(d-1+\frac{1}{d})\beta u+t})\mathds{1}_{\tau_z=0}+e^{-\beta \frac{u}{d}+t}\mathds{1}_{\tau_z=1}\bigg)
\end{align}
where we used the bounds in \eqref{eq: Bounds for M} and split the term $e^{-\beta u}$ in the prefactor over the $d$ factors in the product. Using the bounds in \eqref{eq: Bounds for M} for the second term in \eqref{eq: Bound induction 4} gives the following upper bound 
\begin{equation}\label{eq: Bound induction 6}
    (C_3e^{-\beta \frac{u}{d}+t}+1)\prod_{\substack{z\in \partial y\\z\notin \gamma}}\bigg(C_3e^{-(d+1)\beta u+t}\mathds{1}_{\tau_z=0}+(C_3e^{-\beta u+t}+1)\mathds{1}_{\tau_z=1}\bigg).
\end{equation}
Combining \eqref{eq: Bound induction 5} and \eqref{eq: Bound induction 6} together with $0<u\leq U$ and $C_3>1$ leads to the following upper bound of \eqref{eq: Bound induction 4}
\begin{align*}
   &\big(C_3 e^{-\beta(\frac{d^2+1}{d}u-U)+t}+C_1e^{-(d-2+\frac{1}{d})\beta u+t}+C_3e^{-\beta \frac{u}{d}+t}+1\big)\\
   &\times\prod_{\substack{z\in \partial\gamma\\z\notin\gamma}}\bigg((C_3e^{-\beta((d+1+\frac{1}{d})u-U)+t}+C_1 e^{-(d-1+\frac{1}{d})\beta u+t})\mathds{1}_{\tau_z=0}+(C_3e^{-\beta \frac{u}{d}+t}+1)\mathds{1}_{\tau_z=1}\bigg).
\end{align*}
Exploiting this bound in \eqref{eq: Bound induction 3} and recall that $d\geq 2$ results in the desired upper bound of \eqref{eq: Claim Induction} for $|\gamma|=n+1$. 
\end{proof}

\subsection{Computation of a minimizer}\label{subsec: Computation of a minimizer}
Let us now conclude the proof of \eqref{eq: ineq mu_A key lemma} in Lemma \ref{lem: Key Lemma}. After the application of Lemma \ref{lem: Propagation lemma} in the upper bound of \eqref{eq: application exp Markov}, we obtain 
\begin{equation}\label{eq: bound expectation bad event 2}
   \mu_A(B_A(\gamma))\leq 2C_1\inf_{t\geq 0} e^{-t\delta_0|\gamma|}\big(f(\beta)e^t+1\big)^{1+d|\gamma|}
\end{equation}
and it remains to minimize this expression.

\begin{lemma}\label{lem: Minimizer of upper bounded expectation}
Let $h: \R \rightarrow \R$ be a function defined as
\begin{equation*}
    h(t):=e^{-t\delta_0|\gamma|}(f(\beta)e^{t}+1)^{1+d|\gamma|}
\end{equation*}
for all $t\in \R$. If $\beta$ is small enough, $h$ possesses a unique minimizer 
    \begin{equation}\label{eq: Minimizer of g}
        t^*=\log\left(\frac{\delta_0 |\gamma|}{(1+d|\gamma|)f(\beta)-\delta_0|\gamma|f(\beta)}\right)
    \end{equation}
    where the quantity $\delta_0$ is introduced in \eqref{eq: Definition delta_0} and the function $f$ in Lemma \ref{lem: Propagation lemma}.
\end{lemma}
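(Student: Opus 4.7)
The plan is to pass to the logarithm of $h$, which is licit since $f(\beta),\,e^{t}>0$ ensures $h(t)>0$, and then to differentiate once. Writing
\begin{equation*}
L(t):=\log h(t)=-t\delta_0|\gamma|+(1+d|\gamma|)\log\bigl(f(\beta)e^{t}+1\bigr),
\end{equation*}
a direct computation gives
\begin{equation*}
L'(t)=-\delta_0|\gamma|+(1+d|\gamma|)\,\frac{f(\beta)e^{t}}{f(\beta)e^{t}+1}.
\end{equation*}
Setting $L'(t)=0$ and solving for $e^{t}$ yields
\begin{equation*}
f(\beta)e^{t}\bigl[(1+d|\gamma|)-\delta_0|\gamma|\bigr]=\delta_0|\gamma|,
\end{equation*}
which, after dividing, is exactly the expression $e^{t^{*}}$ with $t^{*}$ as in \eqref{eq: Minimizer of g}.

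It remains to argue uniqueness and that $t^{*}$ is genuinely a minimizer. Writing $u=f(\beta)e^{t}>0$, the map $u\mapsto u/(u+1)$ is strictly increasing on $(0,\infty)$, so $L'$ is strictly increasing in $t$, i.e. $L$ is strictly convex on $\R$. Hence $L$, and therefore $h=e^{L}$, admits at most one critical point, and if it exists it is the unique global minimizer.

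The existence of $t^{*}$ reduces to showing that the logarithm in \eqref{eq: Minimizer of g} is well-defined, i.e. the denominator $f(\beta)\bigl[(1+d|\gamma|)-\delta_0|\gamma|\bigr]$ is strictly positive. Since $f(\beta)>0$, this needs $(1+d|\gamma|)-\delta_0|\gamma|>0$. Here the definition \eqref{eq: Definition delta_0} of $\delta_0$, combined with $u\leq U$, yields $\delta_0<(d-1)/2<d$, so the bracket is positive for every $|\gamma|\geq 1$. This shows the minimizer exists and equals \eqref{eq: Minimizer of g}. The only place the size of $\beta$ enters is in ensuring $t^{*}\geq 0$, which is needed to apply this minimizer inside the infimum over $t\geq 0$ appearing in \eqref{eq: bound expectation bad event 2}: since $t^{*}\geq 0$ iff $\delta_0|\gamma|\geq f(\beta)[1+(d-\delta_0)|\gamma|]$, and $f(\beta)\to 0$ as $\beta\to\infty$, this holds for all sufficiently large $\beta$ uniformly in $|\gamma|\geq 1$.

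There is no real obstacle; the only delicate point is bookkeeping the admissible range of $\beta$ so that $t^{*}\geq 0$ (so that the infimum in \eqref{eq: application exp Markov} is attained at the interior critical point rather than at the boundary $t=0$). Once $t^{*}$ is substituted back into $h$, one obtains the explicit exponential-in-$|\gamma|$ bound on $\mu_{A}(B_{A}(\gamma))$ used to produce the rate $\lambda(\beta)$ of Lemma \ref{lem: Key Lemma}\,b).
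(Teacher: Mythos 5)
Your proof is correct and, at its core, follows the same calculus route as the paper: compute a derivative, solve the critical-point equation for $e^{t}$, and check minimality. You pass to $L=\log h$ and establish strict convexity of $L$ (via the monotonicity of $u\mapsto u/(u+1)$), whereas the paper differentiates $h$ directly and argues minimality by noting $h(t)\to+\infty$ as $t\to\pm\infty$. Your convexity argument is the cleaner of the two for proving \emph{uniqueness}, which the paper asserts but does not explicitly justify beyond the algebra yielding a single solution. You also spell out why $(1+d|\gamma|)-\delta_0|\gamma|>0$ (the paper only remarks ``$\delta_0<d$''), which is the right observation.

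One point worth flagging: you correctly note that the minimizer over all of $\R$ exists for every $\beta>0$, so the lemma's hypothesis plays no role in establishing the formula for $t^{*}$ itself. The condition on $\beta$ is only relevant when plugging $t^{*}$ into the constrained infimum $\inf_{t\geq 0}$ in \eqref{eq: application exp Markov}, and there one needs $t^{*}\geq 0$, which (as you show) holds uniformly in $|\gamma|\geq 1$ once $f(\beta)$ is small, i.e.\ once $\beta$ is \emph{large}. The statement's phrase ``$\beta$ small enough'' thus appears to be a typo for ``$\beta$ large enough''; your reading is the one consistent with the rest of the paper. Altogether your proposal is a sound and slightly more careful rendering of the paper's argument.
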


\begin{proof}
    The first derivative of $h$ reads 
\begin{equation*}
    h'(t)=-\delta_0|\gamma|e^{-t\delta_0|\gamma|}(f(\beta)e^{t}+1)^{1+d|\gamma|}+(1+d|\gamma|)f(\beta)e^{t(1-\delta_0|\gamma|)}(f(\beta)e^{t}+1)^{d|\gamma|}
\end{equation*}
and hence a critical point $t^*$ is given as a solution of the equation
\begin{equation*}
    \delta_0|\gamma|(f(\beta)e^{t^*}+1)=(1+d|\gamma|)f(\beta)e^{t^*}
\end{equation*}
where we already cancelled the term $e^{-t^*\delta_0|\gamma|}(f(\beta)e^{t}+1)^{d|\gamma|}$. Solving for $f(\beta)e^{t^*}$ gives us 
\begin{equation*}
    f(\beta)e^{t^*}=\frac{\delta_0 |\gamma|}{(1+d|\gamma|)-\delta_0|\gamma|}
\end{equation*}
and thus the unique critical point of  $t^*$ has the form as stated in \eqref{eq: Minimizer of g}. Moreover, one can argue that $t^*$ is a minimizer because $\lim_{t\rightarrow \infty}h(t)=\lim_{t\rightarrow -\infty}h(t)=+\infty$. One can see $\lim_{t\rightarrow \infty}h(t)=\infty$ by rewriting $h(t)$ as
\begin{equation*}
    h(t)=e^{t(1+d|\gamma|-\delta_0|\gamma|)}(f(\beta)+e^{-t})^{1+d|\gamma|}
\end{equation*}
and note that $\delta_0<d$.
\end{proof}
Inserting the minimizer in $h$ leads to 
\begin{equation}\label{eq: Minimizer inserted}
    h(t^*)=\left(\frac{1+|\gamma|(d-\delta_0)}{\delta_0 |\gamma|}\right)^{\delta_0 |\gamma|}\left(\frac{1+d|\gamma|}{1+|\gamma|(d-\delta_0)}\right)^{1+d|\gamma|}f(\beta)^{\delta_0 |\gamma|}.
\end{equation}
Note that 
\begin{equation*}
    \frac{1+|\gamma|(d-\delta_0)}{\delta_0 |\gamma|}=\frac{1}{\delta_0 |\gamma|}+\frac{d}{\delta_0}-1\geq 1
\end{equation*}
because 
\begin{equation*}
    \frac{d}{\delta_0}=2\frac{u+U}{u}\frac{d}{d-1}\geq 2.
\end{equation*}
Consequently, we can bound \eqref{eq: Minimizer inserted} from above by
\begin{equation*}
   \left(\frac{1+d|\gamma|}{\delta_0 |\gamma|}\right)^{1+d|\gamma|}f(\beta)^{\delta_0 |\gamma|}
\end{equation*}
where we upper bounded the exponent $\delta_0|\gamma|$ of the first term by $1+d|\gamma|$. Finally, note that this expression is bounded from above by 
\begin{equation*}
   \left(\frac{1+d}{\delta_0}\right)^{1+d|\gamma|}f(\beta)^{\delta_0 |\gamma|}.
\end{equation*}
Combining everything we have done so far in \eqref{eq: bound expectation bad event 2} gives us
\begin{equation*}
    \mu_A(B_A(\gamma))\leq 2C_1\left(\frac{1+d}{\delta_0}\right)\left(\left(\frac{1+d}{\delta_0}\right)^{d}f(\beta)^{\delta_0}\right)^{|\gamma|}.
\end{equation*}
Inserting the definition of $\delta_0$, see \eqref{eq: Definition delta_0}, and the function $f$, introduced in Lemma \ref{lem: Propagation lemma}, one can write it in the form $C_4e^{-\lambda(\beta)|\gamma|}$ where $C_4$ and $\lambda(\beta)$ are given in Lemma \ref{lem: Key Lemma}. Finally, this ends the proof of part b) in Lemma \ref{lem: Key Lemma}.

\section{Consequences: Single-site and multi-site reconstruction}\label{sec: Non-extremality}

Using Lemma \ref{lem: Key Lemma}, we first prove in Subsection \ref{subsec: single-site reconstruction bound} a restricted single-site reconstruction bound (Theorem \ref{thm: mu_A reconstruction theorem}) and deduce $i)$ of Theorem \ref{thm: Main results}. Moreover, we show in Subsection \ref{subsec: Multi-site reconstruction} a multi-site reconstruction on thinned branches of the tree (Theorem \ref{thm: Branch overlap}) which leads to the almost-sure singularity of the $\pi$-kernels (Theorem \ref{thm: A.s. singularity extremals}) and hence to statement $ii)$ of Theorem \ref{thm: Main results}.

\subsection{Single-site reconstruction}\label{subsec: single-site reconstruction bound}

\begin{theorem}[$\mu_A$ single-site reconstruction theorem]\label{thm: mu_A reconstruction theorem}

Consider a $\Z_q$-valued nearest-neighbor clock model with Hamiltonian given in \eqref{def: Hamiltonian} and a potential defined in \eqref{eq: Potential ferr n.n. model} fulfilling the $u,U,d$-bounds in \eqref{eq: u,U-bounds}. Furthermore, let $A\subsetneq \Z_q$ with $|A|\geq 2$ and $\beta$ be large enough. Then, the $A$-localized state $\mu_A$ for this model, see Proposition \ref{prop: Bounds A-loc. states}, satisfies the following bound
\begin{equation}\label{eq: reconstruction bound inequality}
    \mu_A\bigg(\big\{\omega:~\pi(\sigma_\rho=a|\omega)< 1-\epsilon_1(\beta)\big\}\bigg| \sigma_\rho=a\bigg) \leq 2|A|\epsilon_2(\beta)
\end{equation}
for any $a\in A$. Here, $\epsilon_1(\beta)\downarrow 0$ and $\epsilon_2(\beta)\downarrow 0$ as $\beta \uparrow \infty$ and both quantities were introduced in Lemma \ref{lem: Key Lemma}.
\end{theorem}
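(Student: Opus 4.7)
The idea is to combine the two parts of Lemma \ref{lem: Key Lemma} applied at $v = \rho$, together with the lower bound on the single-site marginal $\pi_A(a)$ supplied by Proposition \ref{prop: Bounds A-loc. states}(iv). Part (a) of the Key Lemma tells us that outside the bad event $B_{A,\rho}$ the $\pi$-kernel is almost a point mass at $\omega_\rho$, while part (b) tells us that $B_{A,\rho}$ is exponentially improbable under $\mu_A$ itself. Conditioning on $\{\sigma_\rho = a\}$ for $a \in A$ costs only a factor $|A|$ (up to a factor $2$), because $\pi_A(a)$ is bounded below by roughly $1/|A|$ for large $\beta$.

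\textbf{Step 1: reduce the event to $B_{A,\rho}$.} Fix $a \in A$ and consider
\[
E_a := \bigl\{\omega:\pi(\sigma_\rho = a \mid \omega) < 1 - \epsilon_1(\beta)\bigr\} \cap \{\omega_\rho = a\}.
\]
On $E_a$ one has $\pi(\sigma_\rho \neq \omega_\rho \mid \omega) = \pi(\sigma_\rho \neq a \mid \omega) > \epsilon_1(\beta)$. By part (a) of Lemma \ref{lem: Key Lemma}, applied at $v = \rho$, we have $\mu_A$-a.s.
\[
\pi(\sigma_\rho \neq \omega_\rho \mid \omega) \leq \mathds{1}_{B_{A,\rho}}(\omega) + \epsilon_1(\beta),
\]
so $E_a \subset B_{A,\rho}$ up to a $\mu_A$-nullset.

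\textbf{Step 2: bound the conditional probability of $B_{A,\rho}$.} By Step 1,
\[
\mu_A\bigl(\{\omega:\pi(\sigma_\rho = a \mid \omega) < 1 - \epsilon_1(\beta)\} \,\big|\, \sigma_\rho = a\bigr) \leq \frac{\mu_A(B_{A,\rho})}{\mu_A(\sigma_\rho = a)} = \frac{\mu_A(B_{A,\rho})}{\pi_A(a)}.
\]
Part (b) of Lemma \ref{lem: Key Lemma} yields $\mu_A(B_{A,\rho}) \leq \epsilon_2(\beta)$. By Proposition \ref{prop: Bounds A-loc. states}(iv), for $\beta$ large enough,
\[
\pi_A(a) \geq \bigl(1 - (C_1 + C_2)e^{-\beta u}\bigr)\frac{1}{|A|} \geq \frac{1}{2|A|}.
\]
Combining these two estimates gives the claimed bound $2|A|\,\epsilon_2(\beta)$.

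\textbf{Remarks on difficulty.} Essentially no obstacle remains: the entire work is packed inside Lemma \ref{lem: Key Lemma}. The only small care needed is the lower bound on $\pi_A(a)$ uniformly over $a \in A$, but this is immediate from Proposition \ref{prop: Bounds A-loc. states}(iv) once $\beta$ is sufficiently large, so one only has to ensure that the threshold for $\beta$ chosen here is at least as large as the thresholds required in the Key Lemma and in Proposition \ref{prop: Bounds A-loc. states}. The fact that $\epsilon_1(\beta), \epsilon_2(\beta) \to 0$ as $\beta \to \infty$ is inherited directly from the Key Lemma and yields the restricted single-site reconstruction in the stated form.
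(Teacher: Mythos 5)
Your proof is correct and follows essentially the same route as the paper: Step 1 shows the event (intersected with $\{\omega_\rho=a\}$) is contained in $B_{A,\rho}$ via Lemma \ref{lem: Key Lemma}(a), and Step 2 divides by $\mu_A(\sigma_\rho=a)$, bounds the numerator by Lemma \ref{lem: Key Lemma}(b), and lower-bounds $\pi_A(a)\ge 1/(2|A|)$ via Proposition \ref{prop: Bounds A-loc. states}(iv). This is exactly the argument in the paper, including the final $(1-(C_1+C_2)e^{-\beta u})>\tfrac12$ simplification.
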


\begin{remark}
    The statement of Theorem \ref{thm: mu_A reconstruction theorem} can be interpreted from a signal reconstruction point of view as follows: Assume that we send the information $\{\sigma_\rho=a\}$ with the conditional probability $\mu_A(\cdot| \sigma_\rho=a)$ to infinity. Let $\omega$ be drawn from this conditional probability of $\mu_A$. In a small temperature regime, the best tail-measurable predictor $\pi(\sigma_0=a|\omega)$ is now able to reconstruct this information if $a\in \Z_q$ was a state in the localization set $A$. Note that this works only for $a\in A$ because the process of $\mu_A$ prefers with a very high probability to stay in this state, see Figure \ref{fig: coarse grained state space} and Proposition \ref{prop: Bounds A-loc. states}. Therefore, the information gets lost only on a very few branches from the origin to infinity. Moreover, note that the probability to preserve the information increases by increasing the degree of the tree. Compare this discussion with picture i) in Figure \ref{fig: broadcasting on the tree}.

    On the other hand, if $a\in A^c$, the corresponding process of $\mu_A$ would like to change its state, see Figure \ref{fig: coarse grained state space} and Proposition \ref{prop: Bounds A-loc. states}. Once it jumps into a state of $A$ it prefers to stay there and hence the information $\{\sigma_\rho=a\}$ gets lost very fast on the way from the origin to the outside of the tree. For an illustration of this idea, see picture ii) in Figure \ref{fig: broadcasting on the tree}.  Therefore, the reconstruction property can not be expected to hold for all $a\in \Z_q$ and has to be restricted to $A$. 
\end{remark}

\begin{figure}[h]
  \centering
  \scalebox{0.55}{
  \begin{minipage}[b]{0.8\textwidth}
    \begin{tikzpicture}[every label/.append style={scale=1.3}, rotate=30]

\node[label=:{\Large i)}] () at (7.8,0.4) {};

\node[label=:{{\Large\color{orange}$A$}}] () at (-0.5,2.5) {};

\node[label=:{{\Large\color{cyan}$A^c$}}] () at (-2.7,-1.3) {};

\node[label=:{{\large\color{black}$\rho$}}] () at (-1.5,0.5) {};

    \node [shape=circle,draw=orange, fill=orange, thick, minimum size=0.4cm] (B) at (0,-1) {};
    \node[shape=circle,draw=orange, fill=orange, thick, minimum size=0.4cm] (C) at (0.866,0.5) {};
    \node[shape=circle,draw=orange, fill=orange, thick, minimum size=0.4cm] (A) at (-0.866,0.5) {};

    \node[shape=circle,draw=orange, fill=orange,thick, minimum size=0.4cm] (BA) at (1,-1.7321) {};
    \node[shape=circle,draw=orange, fill=orange, thick, minimum size=0.4cm] (BB) at (-1,-1.7321) {};

    \node[shape=circle,draw=orange, fill=orange, thick, minimum size=0.4cm] (BAA) at (2.1213,-2.1214) {};
    \node[shape=circle,draw=orange, fill=orange,  thick, minimum size=0.4cm] (BAB) at (0.7764,-2.8978) {};

    \node[shape=circle,draw=orange, fill=orange, thick, minimum size=0.4cm] (BBA) at (-0.7764,-2.8978) {};
    \node[shape=circle,draw=orange, fill=orange,thick, minimum size=0.4cm] (BBB) at (-2.1213,-2.1214) {};

    \node[shape=circle,draw=orange, fill=orange, thick, minimum size=0.4cm] (BAAA) at (3.1734,-2.4352) {};
    \node[shape=circle,draw=orange, fill=orange,thick, minimum size=0.4cm] (BAAB) at (2.4350,-3.1735) {};

    \node[shape=circle,draw=orange, fill=orange,thick, minimum size=0.4cm] (BABA) at (1.5307,-3.6956) {};
    \node[shape=circle,draw=orange, fill=orange,thick, minimum size=0.4cm] (BABB) at (0.5220,-3.9658) {};

    \node[shape=circle,draw=orange,fill=orange,thick, minimum size=0.4cm] (BBAA) at (-0.522,-3.9658) {};
    \node[shape=circle,draw=orange, fill=orange,thick, minimum size=0.4cm] (BBAB) at (-1.5307,-3.6956) {};

    \node[shape=circle,draw=orange,fill=orange,thick, minimum size=0.4cm] (BBBA) at (-2.435,-3.1735) {};
    \node[shape=circle,draw=orange,fill=orange,thick, minimum size=0.4cm] (BBBB) at (-3.1734,-2.4352) {};

    \node[shape=circle,draw=orange,fill=orange,thick, minimum size=0.4cm] (BAAAA) at (4.1573,-2.778) {};
    \node[shape=circle,draw=orange,fill=orange,thick, minimum size=0.4cm] (BAAAB) at (3.7735,-3.2805) {};

    \node[shape=circle,draw=orange,fill=orange,thick, minimum size=0.4cm] (BAABA) at (3.2967,-3.7593) {};
    \node[shape=circle,draw=orange,fill=orange,thick, minimum size=0.4cm](BAABB) at (2.7959,-4.1453) {};

    \node[shape=circle,draw=orange,fill=orange,thick, minimum size=0.4cm] (BABAA) at (2.2114,-4.4845) {};
    \node[shape=circle,draw=orange,fill=orange,thick, minimum size=0.4cm] (BABAB) at (1.6278,-4.7277) {};

    \node[shape=circle,draw=orange,fill=orange,thick, minimum size=0.4cm] (BABBA) at (0.9753,-4.9040) {};
    \node[shape=circle,draw=orange,fill=orange,thick, minimum size=0.4cm] (BABBB) at (0.3487,-4.9878) {};

    \node[shape=circle,draw=cyan,pattern=north west lines, pattern color=cyan,thick, minimum size=0.4cm] (BBAAA) at (-0.3269,-4.9893) {};
    \node[shape=circle,draw=orange,fill=orange,thick, minimum size=0.4cm] (BBAAB) at (-0.9539,-4.9082) {};

    \node[shape=circle,draw=orange,fill=orange,thick, minimum size=0.4cm] (BBABA) at (-1.6071,-4.7348) {};
    \node[shape=circle,draw=orange,fill=orange,thick, minimum size=0.4cm] (BBABB) at (-2.1918,-4.4941) {};

    \node[shape=circle,draw=orange,fill=orange,thick, minimum size=0.4cm] (BBBAA) at (-2.7778,-4.1575) {};
    \node[shape=circle,draw=orange,fill=orange,thick, minimum size=0.4cm] (BBBAB) at (-3.2802,-3.7737) {};

    \node[shape=circle,draw=orange,fill=orange,thick, minimum size=0.4cm] (BBBBA) at (-3.7592,-3.2969) {};
    \node[shape=circle,draw=orange,fill=orange,thick, minimum size=0.4cm] (BBBBB) at (-4.1452,-2.7962) {};

    \node[shape=circle,draw=orange,fill=orange,thick, minimum size=0.4cm] (BAAAAA) at (5.0952,-3.1686) {};
    \node[shape=circle,draw=orange,fill=orange,thick, minimum size=0.4cm] (BAAAAB) at (4.877,-3.495) {};

    \node[shape=circle,draw=orange,fill=orange,thick, minimum size=0.4cm] (BAAABA) at (4.6546,-3.7863) {};
    \node[shape=circle,draw=orange,fill=orange,thick, minimum size=0.4cm](BAAABB) at (4.3970,-4.0827) {};

    \node[shape=circle,draw=orange,fill=orange,thick, minimum size=0.4cm](BAABAA) at (4.1015,-4.3793) {};
    \node[shape=circle,draw=orange,fill=orange,thick, minimum size=0.4cm] (BAABAB) at (3.8063,-4.6382) {};

    \node[shape=circle,draw=orange, fill=orange,thick, minimum size=0.4cm] (BAABBA) at (3.516,-4.8619) {};
    \node[shape=circle,draw=orange,fill=orange,thick, minimum size=0.4cm] (BAABBB) at (3.1905,-5.0815) {};

    \node[shape=circle,draw=orange,fill=orange,thick, minimum size=0.4cm]  (BABAAA) at (2.8283,-5.2917) {};
    \node[shape=circle,draw=orange,fill=orange,thick, minimum size=0.4cm]  (BABAAB) at (2.4762,-5.4653) {};

    \node[shape=circle,draw=orange,fill=orange,thick, minimum size=0.4cm]  (BABABA) at (2.1379,-5.6063) {};
    \node[shape=circle,draw=orange,fill=orange,thick, minimum size=0.4cm]  (BABABB) at (1.7667,-5.7341) {};

    \node[shape=circle,draw=orange,fill=orange,thick, minimum size=0.4cm]  (BABBAA) at (1.3623,-5.8434) {};
    \node[shape=circle,draw=orange,fill=orange,thick, minimum size=0.4cm]  (BABBAB) at (0.9772,-5.9199) {};

    \node[shape=circle,draw=orange, fill=orange,thick, minimum size=0.4cm] (BABBBA) at (0.6141,-5.9685) {};
    \node[shape=circle,draw=orange,fill=orange,thick, minimum size=0.4cm] (BABBBB) at (0.2224,-5.9958) {};

    \node[shape=circle,draw=orange,fill=orange,thick, minimum size=0.4cm] (BBAAAA) at (-0.1962,-5.9968) {};
    \node[shape=circle,draw=orange,fill=orange,thick, minimum size=0.4cm] (BBAAAB) at (-0.588,-5.9711) {};

    \node[shape=circle,draw=orange,fill=orange,thick, minimum size=0.4cm] (BBAABA) at (-0.9514,-5.9241) {};
    \node[shape=circle,draw=orange,fill=orange,thick, minimum size=0.4cm] (BBAABB) at (-1.3368,-5.8492) {};

    \node[shape=circle,draw=orange,fill=orange,thick, minimum size=0.4cm] (BBABAA) at (-1.7416,-5.7418) {};
    \node[shape=circle,draw=orange,fill=orange,thick, minimum size=0.4cm] (BBABAB) at (-2.1134,-5.6156) {};

    \node[shape=circle,draw=orange, fill=orange,thick, minimum size=0.4cm] (BBABBA) at (-2.4523,-5.4761) {};
    \node[shape=circle,draw=orange,fill=orange,thick, minimum size=0.4cm] (BBABBB) at (-2.8052,-5.304) {};

    \node[shape=circle,draw=orange,fill=orange,thick, minimum size=0.4cm] (BBBAAA) at (-3.1683,-5.0954) {};
    \node[shape=circle,draw=orange,fill=orange,thick, minimum size=0.4cm] (BBBAAB) at (-3.4948,-4.8773) {};

    \node[shape=circle,draw=orange,fill=orange,thick, minimum size=0.4cm] (BBBABA) at (-3.786,-4.6548) {};
    \node[shape=circle,draw=orange,fill=orange,thick, minimum size=0.4cm] (BBBABB) at (-4.0823,-4.3972) {};

    \node[shape=circle,draw=orange,fill=orange,thick, minimum size=0.4cm](BBBBAA) at (-4.3792,-4.1018) {};
    \node[shape=circle,draw=orange,fill=orange,thick, minimum size=0.4cm] (BBBBAB) at (-4.6381,-3.8066) {};

    \node[shape=circle,draw=orange,fill=orange,thick, minimum size=0.4cm] (BBBBBA) at (-4.8618,-3.5164) {};
    \node[shape=circle,draw=orange,fill=orange,thick, minimum size=0.4cm] (BBBBBB) at (-5.0814,-3.1909) {};

    \node[shape=circle,draw=orange,fill=orange,thick, minimum size=0.4cm] (CA) at (2,0) {};
    \node[shape=circle,draw=orange,fill=orange,thick, minimum size=0.4cm] (CB) at (1,1.732) {};

    \node[shape=circle,draw=orange,fill=orange,thick, minimum size=0.4cm] (CAA) at (2.8978,0.7765) {};
    \node[shape=circle,draw=orange,fill=orange,thick, minimum size=0.4cm] (CAB) at (2.8978,-0.7765) {};

    \node[shape=circle,draw=orange,fill=orange,thick, minimum size=0.4cm] (CBA) at (0.7765,2.8977) {};
    \node[shape=circle,draw=orange,fill=orange,thick, minimum size=0.4cm] (CBB) at (2.1213,2.1212) {};

    \node[shape=circle,draw=cyan, pattern=north west lines, pattern color=cyan,thick, minimum size=0.4cm] (CAAA) at (3.6955,1.5308) {};
    \node[shape=circle,draw=orange,fill=orange,thick, minimum size=0.4cm](CAAB) at (3.9658,0.5222) {};

    \node[shape=circle,draw=orange,fill=orange,thick, minimum size=0.4cm] (CABA) at (3.9658,-0.5222) {};
    \node[shape=circle,draw=orange,fill=orange,thick, minimum size=0.4cm] (CABB) at (3.6955,-1.5308) {};

    \node[shape=circle,draw=orange,fill=orange,thick, minimum size=0.4cm] (CBAA) at (0.5222,3.9657) {};
    \node[shape=circle,draw=orange,fill=orange,thick, minimum size=0.4cm] (CBAB) at (1.5308,3.6954) {};

    \node[shape=circle,draw=orange,fill=orange,thick, minimum size=0.4cm] (CBBA) at (2.435,3.1733) {};
    \node[shape=circle,draw=orange,fill=orange,thick, minimum size=0.4cm] (CBBB) at (3.1734,2.4349) {};

    \node[shape=circle,draw=orange,fill=orange,thick, minimum size=0.4cm] (CAAAA) at (4.4843,2.2115) {};
    \node[shape=circle,draw=orange,fill=orange,thick, minimum size=0.4cm](CAAAB) at (4.7276,1.6279) {};

    \node[shape=circle,draw=orange,fill=orange,thick, minimum size=0.4cm] (CAABA) at (4.9039,0.9756) {};
    \node[shape=circle,draw=orange,fill=orange,thick, minimum size=0.4cm] (CAABB) at (4.9879,0.3489) {};

    \node[shape=circle,draw=orange,fill=orange,thick, minimum size=0.4cm](CABAA) at (4.9893,-0.3271) {};
    \node[shape=circle,draw=orange, fill=orange,thick, minimum size=0.4cm](CABAB) at (4.9082,-0.9542) {};

    \node[shape=circle,draw=orange,fill=orange,thick, minimum size=0.4cm] (CABBA) at (4.7346,-1.6073) {};
    \node[shape=circle,draw=orange,fill=orange,thick, minimum size=0.4cm] (CABBB) at (4.4939,-2.1919) {};

    \node[shape=circle,draw=orange,fill=orange,thick, minimum size=0.4cm] (CBAAA) at (0.3271,4.9892) {};
    \node[shape=circle,draw=orange, fill=orange,thick, minimum size=0.4cm] (CBAAB) at (0.9542,4.908) {};

    \node[shape=circle,draw=orange,fill=orange,thick, minimum size=0.4cm] (CBABA) at (1.6073,4.7345) {};
    \node[shape=circle,draw=orange,fill=orange,thick, minimum size=0.4cm] (CBABB) at (2.1919,4.4938) {};

    \node[shape=circle,draw=orange,fill=orange,thick, minimum size=0.4cm] (CBBAA) at (2.7778,4.1572) {};
    \node[shape=circle,draw=orange,fill=orange,thick, minimum size=0.4cm] (CBBAB) at (3.2802,3.7734) {};

    \node[shape=circle,draw=orange, fill=orange,thick, minimum size=0.4cm] (CBBBA) at (3.7592,3.2965) {};
    \node[shape=circle,draw=orange,fill=orange,thick, minimum size=0.4cm] (CBBBB) at (4.1452,2.7958) {};

    \node[shape=circle,draw=orange,fill=orange,thick, minimum size=0.4cm] (CAAAAA) at (5.2914,2.8284) {};
    \node[shape=circle,draw=orange,fill=orange,thick, minimum size=0.4cm] (CAAAAB) at (5.4651,2.4763) {};

    \node[shape=circle,draw=orange,fill=orange,thick, minimum size=0.4cm](CAAABA) at (5.6062,2.1381) {};
    \node[shape=circle,draw=orange,fill=orange,thick, minimum size=0.4cm] (CAAABB) at (5.734,1.7668) {};

    \node[shape=circle,draw=orange, fill=orange,thick, minimum size=0.4cm] (CAABAA) at (5.8432,1.3626) {};
    \node[shape=circle,draw=orange,fill=orange,thick, minimum size=0.4cm] (CAABAB) at (5.9198,0.9776) {};

    \node[shape=circle,draw=orange,fill=orange,thick, minimum size=0.4cm] (CAABBA) at (5.9686,0.6143) {};
    \node[shape=circle,draw=orange,fill=orange,thick, minimum size=0.4cm] (CAABBB) at (5.996,0.2226) {};

    \node[shape=circle,draw=orange,fill=orange,thick, minimum size=0.4cm](CABAAA) at (5.9968,-0.1964) {};
    \node[shape=circle,draw=orange,fill=orange,thick, minimum size=0.4cm] (CABAAB) at (5.9711,-0.5882) {};

    \node[shape=circle,draw=orange,fill=orange,thick, minimum size=0.4cm] (CABABA) at (5.9242,-0.9517) {};
    \node[shape=circle,draw=orange,fill=orange,thick, minimum size=0.4cm](CABABB) at (5.8492,-1.3371) {};

    \node[shape=circle,draw=orange,fill=orange,thick, minimum size=0.4cm] (CABBAA) at (5.7416,-1.7418) {};
    \node[shape=circle,draw=cyan,pattern=north west lines, pattern color=cyan,thick, minimum size=0.4cm] (CABBAB) at (5.6154,-2.1136) {};

    \node[shape=circle,draw=orange,fill=orange,thick, minimum size=0.4cm] (CABBBA) at (5.4759,-2.4524) {};
    \node[shape=circle,draw=orange,fill=orange,thick, minimum size=0.4cm] (CABBBB) at (5.3037,-2.8053) {};

    \node[shape=circle,draw=orange,fill=orange,thick, minimum size=0.4cm] (CBAAAA) at (0.1964,5.9967) {};
    \node[shape=circle,draw=orange,fill=orange,thick, minimum size=0.4cm] (CBAAAB) at (0.5882,5.9710) {};

    \node[shape=circle,draw=orange,fill=orange,thick, minimum size=0.4cm] (CBAABA) at (0.9517,5.9239) {};
    \node[shape=circle,draw=orange,fill=orange,thick, minimum size=0.4cm] (CBAABB) at (1.3371,5.849) {};

    \node[shape=circle,draw=orange,fill=orange,thick, minimum size=0.4cm] (CBABAA) at (1.7418,5.7415) {};
    \node[shape=circle,draw=orange,fill=orange,thick, minimum size=0.4cm](CBABAB) at (2.1136,5.6153) {};

    \node[shape=circle,draw=orange, fill=orange,thick, minimum size=0.4cm] (CBABBA) at (2.4524,5.4757) {};
    \node[shape=circle,draw=orange,fill=orange,thick, minimum size=0.4cm] (CBABBB) at (2.8053,5.3036) {};

    \node[shape=circle,draw=orange,fill=orange,thick, minimum size=0.4cm] (CBBAAA) at (3.1684,5.0950) {};
    \node[shape=circle,draw=orange,fill=orange,thick, minimum size=0.4cm] (CBBAAB) at (3.4948,4.8769) {};

    \node[shape=circle,draw=cyan,pattern=north west lines, pattern color=cyan,thick, minimum size=0.4cm](CBBABA) at (3.786,4.6544) {};
    \node[shape=circle,draw=orange,fill=orange,thick, minimum size=0.4cm] (CBBABB) at (4.0823,4.3969) {};

    \node[shape=circle,draw=orange,fill=orange,thick, minimum size=0.4cm](CBBBAA) at (4.3792,4.1013) {};
    \node[shape=circle,draw=orange,fill=orange,thick, minimum size=0.4cm] (CBBBAB) at (4.6381,3.8061) {};

    \node[shape=circle,draw=orange,fill=orange,thick, minimum size=0.4cm] (CBBBBA) at (4.8618,3.5159) {};
    \node[shape=circle,draw=orange,fill=orange,thick, minimum size=0.4cm] (CBBBBB) at (5.0813,3.1904) {};

    \draw [->, very thick] (A) -- (B);
    \draw [->, very thick] (A) -- (C);

    \draw [->, very thick] (B) -- (BA);
    \draw [->, very thick] (B) -- (BB);

    \draw [->, very thick, draw=black] (BA) -- (BAA);
    \draw [->, very thick] (BA) -- (BAB);

    \draw [->, very thick] (BB) -- (BBA);
    \draw [->, very thick] (BB) -- (BBB);

    \draw [->, very thick] (BAA) -- (BAAA);
    \draw [->, very thick] (BAA) -- (BAAB);

    \draw [->, very thick] (BAB) -- (BABA);
    \draw [->, very thick] (BAB) -- (BABB);

    \draw [->, very thick] (BBA) -- (BBAA);
    \draw [->, very thick] (BBA) -- (BBAB);

    \draw [->, very thick, dashed] (BBB) -- (BBBA);
    \draw [->, very thick] (BBB) -- (BBBB);

    \draw [->, very thick] (BAAA) -- (BAAAA);
    \draw [->, very thick] (BAAA) -- (BAAAB);

    \draw [->, very thick, dashed] (BAAB) -- (BAABA);
    \draw [->, very thick] (BAAB) -- (BAABB);

    \draw [->, very thick] (BABA) -- (BABAA);
    \draw [->, very thick, draw=black] (BABA) -- (BABAB);

    \draw [->, very thick, draw=black] (BABB) -- (BABBA);
    \draw [->, very thick, draw=black] (BABB) -- (BABBB);

    \draw [->, very thick, draw=black, dashed] (BBAA) -- (BBAAA);
    \draw [->, very thick, draw=black] (BBAA) -- (BBAAB);

    \draw [->, very thick, draw=black] (BBAB) -- (BBABA);
    \draw [->, very thick, draw=black] (BBAB) -- (BBABB);

    \draw [->, very thick, draw=black] (BBBA) -- (BBBAA);
    \draw [->, very thick, draw=black] (BBBA) -- (BBBAB);

    \draw [->, very thick, draw=black] (BBBB) -- (BBBBA);
    \draw [->, very thick, draw=black] (BBBB) -- (BBBBB);

    \draw [->, very thick, dashed] (BAAAA) -- (BAAAAA);
    \draw [->, very thick] (BAAAA) -- (BAAAAB);

    \draw [->, very thick] (BAAAB) -- (BAAABA);
    \draw [->, very thick] (BAAAB) -- (BAAABB);

    \draw [->, very thick] (BAABA) -- (BAABAA);
    \draw [->, very thick] (BAABA) -- (BAABAB);

    \draw [->, very thick] (BAABB) -- (BAABBA);
    \draw [->, very thick] (BAABB) -- (BAABBB);

    \draw [->, very thick] (BABAA) -- (BABAAA);
    \draw [->, very thick, dashed] (BABAA) -- (BABAAB);

    \draw [->, very thick] (BABAB) -- (BABABA);
    \draw [->, very thick] (BABAB) -- (BABABB);

    \draw [->, very thick] (BABBA) -- (BABBAA);
    \draw [->, very thick] (BABBA) -- (BABBAB);

    \draw [->, very thick] (BABBB) -- (BABBBA);
    \draw [->, very thick] (BABBB) -- (BABBBB);

    \draw [->, very thick, dashed] (BBAAA) -- (BBAAAA);
    \draw [->, very thick, dashed] (BBAAA) -- (BBAAAB);

    \draw [->, very thick] (BBAAB) -- (BBAABA);
    \draw [->, very thick] (BBAAB) -- (BBAABB);

    \draw [->, very thick] (BBABA) -- (BBABAA);
    \draw [->, very thick] (BBABA) -- (BBABAB);

    \draw [->, very thick] (BBABB) -- (BBABBA);
    \draw [->, very thick] (BBABB) -- (BBABBB);

    \draw [->, very thick] (BBBAA) -- (BBBAAA);
    \draw [->, very thick] (BBBAA) -- (BBBAAB);

    \draw [->, very thick] (BBBAB) -- (BBBABA);
    \draw [->, very thick] (BBBAB) -- (BBBABB);

    \draw[->, very thick, dashed]  (BBBBA) -- (BBBBAA);
    \draw[->, very thick]  (BBBBA) -- (BBBBAB);

    \draw[->, very thick]  (BBBBB) -- (BBBBBA);
    \draw[->, very thick]  (BBBBB) -- (BBBBBB);

    \draw [->, very thick] (C) -- (CA);
    \draw [->, very thick] (C) -- (CB);

    \draw [->, very thick, draw=black] (CA) -- (CAA);
    \draw [->, very thick, draw=black] (CA) -- (CAB);

    \draw [->, very thick, dashed] (CB) -- (CBA);
    \draw [->, very thick] (CB) -- (CBB);

    \draw [->, very thick, dashed] (CAA) -- (CAAA);
    \draw [->, very thick,] (CAA) -- (CAAB);

    \draw [->, very thick] (CAB) -- (CABA);
    \draw [->, very thick] (CAB) -- (CABB);

    \draw [->, very thick] (CBA) -- (CBAA);
    \draw [->, very thick] (CBA) -- (CBAB);

    \draw [->, very thick] (CBB) -- (CBBA);
    \draw [->, very thick] (CBB) -- (CBBB);

    \draw [->, very thick, dashed] (CAAA) -- (CAAAA);
    \draw [->, very thick, dashed] (CAAA) -- (CAAAB);

    \draw [->, very thick] (CAAB) -- (CAABA);
    \draw [->, very thick] (CAAB) -- (CAABB);

    \draw [->, very thick] (CABA) -- (CABAA);
    \draw [->, very thick] (CABA) -- (CABAB);

    \draw [->, very thick] (CABB) -- (CABBA);
    \draw [->, very thick] (CABB) -- (CABBB);

    \draw [->, very thick, draw=black] (CBAA) -- (CBAAA);
    \draw [->, very thick, draw=black] (CBAA) -- (CBAAB);

    \draw [->, very thick, draw=black] (CBAB) -- (CBABA);
    \draw [->, very thick, draw=black] (CBAB) -- (CBABB);

    \draw [->, very thick, draw=black] (CBBA) -- (CBBAA);
    \draw [->, very thick, draw=black] (CBBA) -- (CBBAB);
    
    \draw [->, very thick] (CBBB) -- (CBBBA);
    \draw [->, very thick, draw=black] (CBBB) -- (CBBBB);

    \draw [->, very thick] (CAAAA) -- (CAAAAA);
    \draw [->, very thick] (CAAAA) -- (CAAAAB);

    \draw [->, very thick] (CAAAB) -- (CAAABA);
    \draw [->, very thick] (CAAAB) -- (CAAABB);

    \draw [->, very thick] (CAABA) -- (CAABAA);
    \draw [->, very thick] (CAABA) -- (CAABAB);

    \draw [->, very thick] (CAABB) -- (CAABBA);
    \draw [->, very thick, dashed] (CAABB) -- (CAABBB);

    \draw [->, very thick] (CABAA) -- (CABAAA);
    \draw [->, very thick] (CABAA) -- (CABAAB);
    
    \draw [->, very thick] (CABAB) -- (CABABA);
    \draw [->, very thick] (CABAB) -- (CABABB);

    \draw [->, very thick] (CABBA) -- (CABBAA);
    \draw [->, very thick, dashed] (CABBA) -- (CABBAB);

    \draw [->, very thick] (CABBB) -- (CABBBA);
    \draw [->, very thick] (CABBB) -- (CABBBB);

    \draw [->, very thick] (CBAAA) -- (CBAAAA);
    \draw [->, very thick] (CBAAA) -- (CBAAAB);

    \draw [->, very thick] (CBAAB) -- (CBAABA);
    \draw [->, very thick] (CBAAB) -- (CBAABB);

    \draw [->, very thick] (CBABA) -- (CBABAA);
    \draw [->, very thick] (CBABA) -- (CBABAB);

    \draw [->, very thick] (CBABB) -- (CBABBA);
    \draw [->, very thick] (CBABB) -- (CBABBB);

    \draw [->, very thick] (CBBAA) -- (CBBAAA);
    \draw [->, very thick] (CBBAA) -- (CBBAAB);

    \draw [->, very thick, dashed] (CBBAB) -- (CBBABA);
    \draw [->, very thick] (CBBAB) -- (CBBABB);
    
    \draw [->, very thick] (CBBBA) -- (CBBBAA);
    \draw [->, very thick] (CBBBA) -- (CBBBAB);

    \draw [->, very thick] (CBBBB) -- (CBBBBA);
    \draw [->, very thick] (CBBBB) -- (CBBBBB);

    \end{tikzpicture}
  \end{minipage}
  } 
  \hfill
  \scalebox{0.55}{
  \begin{minipage}[b]{0.8\textwidth}
    \begin{tikzpicture}[every label/.append style={scale=1.3},rotate=30]

\node[label=:{\Large ii)}] () at (7.8,0.4) {};

\node[label=:{{\Large\color{orange}$A$}}] () at (-0.5,2.5) {};

\node[label=:{{\Large\color{cyan}$A^c$}}] () at (-2.7,-1.3) {};

\node[label=:{{\large\color{black}$\rho$}}] () at (-1.5,0.5) {};

    \node [shape=circle,draw=orange, fill=orange, thick, minimum size=0.4cm] (B) at (0,-1) {};
    \node[shape=circle,draw=cyan, pattern=north west lines, pattern color=cyan,  thick, minimum size=0.4cm] (C) at (0.866,0.5) {};
    \node[shape=circle,draw=cyan, pattern=north west lines, pattern color=cyan, thick, minimum size=0.4cm] (A) at (-0.866,0.5) {};

    \node[shape=circle,draw=orange, fill=orange,thick, minimum size=0.4cm] (BA) at (1,-1.7321) {};
    \node[shape=circle,draw=orange, fill=orange, thick, minimum size=0.4cm] (BB) at (-1,-1.7321) {};

    \node[shape=circle,draw=orange, fill=orange, thick, minimum size=0.4cm] (BAA) at (2.1213,-2.1214) {};
    \node[shape=circle,draw=orange, fill=orange,  thick, minimum size=0.4cm] (BAB) at (0.7764,-2.8978) {};

    \node[shape=circle,draw=orange, fill=orange, thick, minimum size=0.4cm] (BBA) at (-0.7764,-2.8978) {};
    \node[shape=circle,draw=cyan, pattern=north west lines, pattern color=cyan,  thick, minimum size=0.4cm] (BBB) at (-2.1213,-2.1214) {};

    \node[shape=circle,draw=orange, fill=orange, thick, minimum size=0.4cm] (BAAA) at (3.1734,-2.4352) {};
    \node[shape=circle,draw=orange, fill=orange,thick, minimum size=0.4cm] (BAAB) at (2.4350,-3.1735) {};

    \node[shape=circle,draw=orange, fill=orange,thick, minimum size=0.4cm] (BABA) at (1.5307,-3.6956) {};
    \node[shape=circle,draw=orange, fill=orange,thick, minimum size=0.4cm] (BABB) at (0.5220,-3.9658) {};

    \node[shape=circle,draw=orange,fill=orange,thick, minimum size=0.4cm] (BBAA) at (-0.522,-3.9658) {};
    \node[shape=circle,draw=orange, fill=orange,thick, minimum size=0.4cm] (BBAB) at (-1.5307,-3.6956) {};

    \node[shape=circle,draw=orange,fill=orange,thick, minimum size=0.4cm] (BBBA) at (-2.435,-3.1735) {};
    \node[shape=circle,draw=orange,fill=orange,thick, minimum size=0.4cm] (BBBB) at (-3.1734,-2.4352) {};

    \node[shape=circle,draw=orange,fill=orange,thick, minimum size=0.4cm] (BAAAA) at (4.1573,-2.778) {};
    \node[shape=circle,draw=cyan,pattern=north west lines, pattern color=cyan,thick, minimum size=0.4cm] (BAAAB) at (3.7735,-3.2805) {};

    \node[shape=circle,draw=orange,fill=orange,thick, minimum size=0.4cm] (BAABA) at (3.2967,-3.7593) {};
    \node[shape=circle,draw=orange,fill=orange,thick, minimum size=0.4cm](BAABB) at (2.7959,-4.1453) {};

    \node[shape=circle,draw=orange,fill=orange,thick, minimum size=0.4cm] (BABAA) at (2.2114,-4.4845) {};
    \node[shape=circle,draw=orange,fill=orange,thick, minimum size=0.4cm] (BABAB) at (1.6278,-4.7277) {};

    \node[shape=circle,draw=orange,fill=orange,thick, minimum size=0.4cm] (BABBA) at (0.9753,-4.9040) {};
    \node[shape=circle,draw=orange,fill=orange,thick, minimum size=0.4cm] (BABBB) at (0.3487,-4.9878) {};

    \node[shape=circle,draw=orange,fill=orange,thick, minimum size=0.4cm] (BBAAA) at (-0.3269,-4.9893) {};
    \node[shape=circle,draw=orange,fill=orange,thick, minimum size=0.4cm] (BBAAB) at (-0.9539,-4.9082) {};

    \node[shape=circle,draw=orange,fill=orange,thick, minimum size=0.4cm] (BBABA) at (-1.6071,-4.7348) {};
    \node[shape=circle,draw=orange,fill=orange,thick, minimum size=0.4cm] (BBABB) at (-2.1918,-4.4941) {};

    \node[shape=circle,draw=orange,fill=orange,thick, minimum size=0.4cm] (BBBAA) at (-2.7778,-4.1575) {};
    \node[shape=circle,draw=orange,fill=orange,thick, minimum size=0.4cm] (BBBAB) at (-3.2802,-3.7737) {};

    \node[shape=circle,draw=orange,fill=orange,thick, minimum size=0.4cm] (BBBBA) at (-3.7592,-3.2969) {};
    \node[shape=circle,draw=orange,fill=orange,thick, minimum size=0.4cm] (BBBBB) at (-4.1452,-2.7962) {};

    \node[shape=circle,draw=orange,fill=orange,thick, minimum size=0.4cm] (BAAAAA) at (5.0952,-3.1686) {};
    \node[shape=circle,draw=orange,fill=orange,thick, minimum size=0.4cm] (BAAAAB) at (4.877,-3.495) {};

    \node[shape=circle,draw=orange,fill=orange,thick, minimum size=0.4cm] (BAAABA) at (4.6546,-3.7863) {};
    \node[shape=circle,draw=orange,fill=orange,thick, minimum size=0.4cm](BAAABB) at (4.3970,-4.0827) {};

    \node[shape=circle,draw=orange,fill=orange,thick, minimum size=0.4cm](BAABAA) at (4.1015,-4.3793) {};
    \node[shape=circle,draw=orange,fill=orange,thick, minimum size=0.4cm] (BAABAB) at (3.8063,-4.6382) {};

    \node[shape=circle,draw=cyan,pattern=north west lines, pattern color=cyan,thick, minimum size=0.4cm] (BAABBA) at (3.516,-4.8619) {};
    \node[shape=circle,draw=orange,fill=orange,thick, minimum size=0.4cm] (BAABBB) at (3.1905,-5.0815) {};

    \node[shape=circle,draw=orange,fill=orange,thick, minimum size=0.4cm]  (BABAAA) at (2.8283,-5.2917) {};
    \node[shape=circle,draw=orange,fill=orange,thick, minimum size=0.4cm]  (BABAAB) at (2.4762,-5.4653) {};

    \node[shape=circle,draw=orange,fill=orange,thick, minimum size=0.4cm]  (BABABA) at (2.1379,-5.6063) {};
    \node[shape=circle,draw=orange,fill=orange,thick, minimum size=0.4cm]  (BABABB) at (1.7667,-5.7341) {};

    \node[shape=circle,draw=orange,fill=orange,thick, minimum size=0.4cm]  (BABBAA) at (1.3623,-5.8434) {};
    \node[shape=circle,draw=orange,fill=orange,thick, minimum size=0.4cm]  (BABBAB) at (0.9772,-5.9199) {};

    \node[shape=circle,draw=orange,fill=orange,thick, minimum size=0.4cm] (BABBBA) at (0.6141,-5.9685) {};
    \node[shape=circle,draw=orange,fill=orange,thick, minimum size=0.4cm] (BABBBB) at (0.2224,-5.9958) {};

    \node[shape=circle,draw=orange,fill=orange,thick, minimum size=0.4cm] (BBAAAA) at (-0.1962,-5.9968) {};
    \node[shape=circle,draw=orange,fill=orange,thick, minimum size=0.4cm] (BBAAAB) at (-0.588,-5.9711) {};

    \node[shape=circle,draw=orange,fill=orange,thick, minimum size=0.4cm] (BBAABA) at (-0.9514,-5.9241) {};
    \node[shape=circle,draw=orange,fill=orange,thick, minimum size=0.4cm] (BBAABB) at (-1.3368,-5.8492) {};

    \node[shape=circle,draw=orange,fill=orange,thick, minimum size=0.4cm] (BBABAA) at (-1.7416,-5.7418) {};
    \node[shape=circle,draw=orange,fill=orange,thick, minimum size=0.4cm] (BBABAB) at (-2.1134,-5.6156) {};

    \node[shape=circle,draw=cyan,pattern=north west lines, pattern color=cyan,thick, minimum size=0.4cm] (BBABBA) at (-2.4523,-5.4761) {};
    \node[shape=circle,draw=orange,fill=orange,thick, minimum size=0.4cm] (BBABBB) at (-2.8052,-5.304) {};

    \node[shape=circle,draw=orange,fill=orange,thick, minimum size=0.4cm] (BBBAAA) at (-3.1683,-5.0954) {};
    \node[shape=circle,draw=orange,fill=orange,thick, minimum size=0.4cm] (BBBAAB) at (-3.4948,-4.8773) {};

    \node[shape=circle,draw=orange,fill=orange,thick, minimum size=0.4cm] (BBBABA) at (-3.786,-4.6548) {};
    \node[shape=circle,draw=orange,fill=orange,thick, minimum size=0.4cm] (BBBABB) at (-4.0823,-4.3972) {};

    \node[shape=circle,draw=orange,fill=orange,thick, minimum size=0.4cm](BBBBAA) at (-4.3792,-4.1018) {};
    \node[shape=circle,draw=orange,fill=orange,thick, minimum size=0.4cm] (BBBBAB) at (-4.6381,-3.8066) {};

    \node[shape=circle,draw=orange,fill=orange,thick, minimum size=0.4cm] (BBBBBA) at (-4.8618,-3.5164) {};
    \node[shape=circle,draw=orange,fill=orange,thick, minimum size=0.4cm] (BBBBBB) at (-5.0814,-3.1909) {};

    \node[shape=circle,draw=orange,fill=orange,thick, minimum size=0.4cm] (CA) at (2,0) {};
    \node[shape=circle,draw=orange,fill=orange,thick, minimum size=0.4cm] (CB) at (1,1.732) {};

    \node[shape=circle,draw=orange,fill=orange,thick, minimum size=0.4cm] (CAA) at (2.8978,0.7765) {};
    \node[shape=circle,draw=orange,fill=orange,thick, minimum size=0.4cm] (CAB) at (2.8978,-0.7765) {};

    \node[shape=circle,draw=orange,fill=orange,thick, minimum size=0.4cm] (CBA) at (0.7765,2.8977) {};
    \node[shape=circle,draw=orange,fill=orange,thick, minimum size=0.4cm] (CBB) at (2.1213,2.1212) {};

    \node[shape=circle,draw=orange,fill=orange,thick, minimum size=0.4cm] (CAAA) at (3.6955,1.5308) {};
    \node[shape=circle,draw=orange,fill=orange,thick, minimum size=0.4cm](CAAB) at (3.9658,0.5222) {};

    \node[shape=circle,draw=orange,fill=orange,thick, minimum size=0.4cm] (CABA) at (3.9658,-0.5222) {};
    \node[shape=circle,draw=orange,fill=orange,thick, minimum size=0.4cm] (CABB) at (3.6955,-1.5308) {};

    \node[shape=circle,draw=orange,fill=orange,thick, minimum size=0.4cm] (CBAA) at (0.5222,3.9657) {};
    \node[shape=circle,draw=orange,fill=orange,thick, minimum size=0.4cm] (CBAB) at (1.5308,3.6954) {};

    \node[shape=circle,draw=orange,fill=orange,thick, minimum size=0.4cm] (CBBA) at (2.435,3.1733) {};
    \node[shape=circle,draw=orange,fill=orange,thick, minimum size=0.4cm] (CBBB) at (3.1734,2.4349) {};

    \node[shape=circle,draw=orange,fill=orange,thick, minimum size=0.4cm] (CAAAA) at (4.4843,2.2115) {};
    \node[shape=circle,draw=orange,fill=orange,thick, minimum size=0.4cm](CAAAB) at (4.7276,1.6279) {};

    \node[shape=circle,draw=orange,fill=orange,thick, minimum size=0.4cm] (CAABA) at (4.9039,0.9756) {};
    \node[shape=circle,draw=orange,fill=orange,thick, minimum size=0.4cm] (CAABB) at (4.9879,0.3489) {};

    \node[shape=circle,draw=orange,fill=orange,thick, minimum size=0.4cm](CABAA) at (4.9893,-0.3271) {};
    \node[shape=circle,draw=orange,fill=orange,thick, minimum size=0.4cm](CABAB) at (4.9082,-0.9542) {};

    \node[shape=circle,draw=orange,fill=orange,thick, minimum size=0.4cm] (CABBA) at (4.7346,-1.6073) {};
    \node[shape=circle,draw=orange,fill=orange,thick, minimum size=0.4cm] (CABBB) at (4.4939,-2.1919) {};

    \node[shape=circle,draw=orange,fill=orange,thick, minimum size=0.4cm] (CBAAA) at (0.3271,4.9892) {};
    \node[shape=circle,draw=orange,fill=orange,thick, minimum size=0.4cm] (CBAAB) at (0.9542,4.908) {};

    \node[shape=circle,draw=orange,fill=orange,thick, minimum size=0.4cm] (CBABA) at (1.6073,4.7345) {};
    \node[shape=circle,draw=orange,fill=orange,thick, minimum size=0.4cm] (CBABB) at (2.1919,4.4938) {};

    \node[shape=circle,draw=orange,fill=orange,thick, minimum size=0.4cm] (CBBAA) at (2.7778,4.1572) {};
    \node[shape=circle,draw=orange,fill=orange,thick, minimum size=0.4cm] (CBBAB) at (3.2802,3.7734) {};

    \node[shape=circle,draw=cyan,pattern=north west lines, pattern color=cyan,thick, minimum size=0.4cm] (CBBBA) at (3.7592,3.2965) {};
    \node[shape=circle,draw=orange,fill=orange,thick, minimum size=0.4cm] (CBBBB) at (4.1452,2.7958) {};

    \node[shape=circle,draw=orange,fill=orange,thick, minimum size=0.4cm] (CAAAAA) at (5.2914,2.8284) {};
    \node[shape=circle,draw=orange,fill=orange,thick, minimum size=0.4cm] (CAAAAB) at (5.4651,2.4763) {};

    \node[shape=circle,draw=orange,fill=orange,thick, minimum size=0.4cm](CAAABA) at (5.6062,2.1381) {};
    \node[shape=circle,draw=orange,fill=orange,thick, minimum size=0.4cm] (CAAABB) at (5.734,1.7668) {};

    \node[shape=circle,draw=cyan,pattern=north west lines, pattern color=cyan,thick, minimum size=0.4cm] (CAABAA) at (5.8432,1.3626) {};
    \node[shape=circle,draw=orange,fill=orange,thick, minimum size=0.4cm] (CAABAB) at (5.9198,0.9776) {};

    \node[shape=circle,draw=orange,fill=orange,thick, minimum size=0.4cm] (CAABBA) at (5.9686,0.6143) {};
    \node[shape=circle,draw=orange,fill=orange,thick, minimum size=0.4cm] (CAABBB) at (5.996,0.2226) {};

    \node[shape=circle,draw=orange,fill=orange,thick, minimum size=0.4cm](CABAAA) at (5.9968,-0.1964) {};
    \node[shape=circle,draw=orange,fill=orange,thick, minimum size=0.4cm] (CABAAB) at (5.9711,-0.5882) {};

    \node[shape=circle,draw=orange,fill=orange,thick, minimum size=0.4cm] (CABABA) at (5.9242,-0.9517) {};
    \node[shape=circle,draw=orange,fill=orange,thick, minimum size=0.4cm](CABABB) at (5.8492,-1.3371) {};

    \node[shape=circle,draw=orange,fill=orange,thick, minimum size=0.4cm] (CABBAA) at (5.7416,-1.7418) {};
    \node[shape=circle,draw=orange,fill=orange,thick, minimum size=0.4cm] (CABBAB) at (5.6154,-2.1136) {};

    \node[shape=circle,draw=orange,fill=orange,thick, minimum size=0.4cm] (CABBBA) at (5.4759,-2.4524) {};
    \node[shape=circle,draw=orange,fill=orange,thick, minimum size=0.4cm] (CABBBB) at (5.3037,-2.8053) {};

    \node[shape=circle,draw=orange,fill=orange,thick, minimum size=0.4cm] (CBAAAA) at (0.1964,5.9967) {};
    \node[shape=circle,draw=orange,fill=orange,thick, minimum size=0.4cm] (CBAAAB) at (0.5882,5.9710) {};

    \node[shape=circle,draw=orange,fill=orange,thick, minimum size=0.4cm] (CBAABA) at (0.9517,5.9239) {};
    \node[shape=circle,draw=orange,fill=orange,thick, minimum size=0.4cm] (CBAABB) at (1.3371,5.849) {};

    \node[shape=circle,draw=orange,fill=orange,thick, minimum size=0.4cm] (CBABAA) at (1.7418,5.7415) {};
    \node[shape=circle,draw=orange,fill=orange,thick, minimum size=0.4cm](CBABAB) at (2.1136,5.6153) {};

    \node[shape=circle,draw=cyan,pattern=north west lines, pattern color=cyan,thick, minimum size=0.4cm] (CBABBA) at (2.4524,5.4757) {};
    \node[shape=circle,draw=orange,fill=orange,thick, minimum size=0.4cm] (CBABBB) at (2.8053,5.3036) {};

    \node[shape=circle,draw=orange,fill=orange,thick, minimum size=0.4cm] (CBBAAA) at (3.1684,5.0950) {};
    \node[shape=circle,draw=orange,fill=orange,thick, minimum size=0.4cm] (CBBAAB) at (3.4948,4.8769) {};

    \node[shape=circle,draw=orange,fill=orange,thick, minimum size=0.4cm] (CBBABA) at (3.786,4.6544) {};
    \node[shape=circle,draw=orange,fill=orange,thick, minimum size=0.4cm] (CBBABB) at (4.0823,4.3969) {};

    \node[shape=circle,draw=orange,fill=orange,thick, minimum size=0.4cm](CBBBAA) at (4.3792,4.1013) {};
    \node[shape=circle,draw=orange,fill=orange,thick, minimum size=0.4cm] (CBBBAB) at (4.6381,3.8061) {};

    \node[shape=circle,draw=orange,fill=orange,thick, minimum size=0.4cm] (CBBBBA) at (4.8618,3.5159) {};
    \node[shape=circle,draw=orange,fill=orange,thick, minimum size=0.4cm] (CBBBBB) at (5.0813,3.1904) {};

    \draw [->, very thick, dashed] (A) -- (B);
    \draw [->, very thick, dashed] (A) -- (C);

    \draw [->, very thick] (B) -- (BA);
    \draw [->, very thick] (B) -- (BB);

    \draw [->, very thick, draw=black] (BA) -- (BAA);
    \draw [->, very thick] (BA) -- (BAB);

    \draw [->, very thick] (BB) -- (BBA);
    \draw [->, very thick, dashed] (BB) -- (BBB);

    \draw [->, very thick] (BAA) -- (BAAA);
    \draw [->, very thick] (BAA) -- (BAAB);

    \draw [->, very thick] (BAB) -- (BABA);
    \draw [->, very thick] (BAB) -- (BABB);

    \draw [->, very thick] (BBA) -- (BBAA);
    \draw [->, very thick] (BBA) -- (BBAB);

    \draw [->, very thick, dashed] (BBB) -- (BBBA);
    \draw [->, very thick, dashed] (BBB) -- (BBBB);

    \draw [->, very thick] (BAAA) -- (BAAAA);
    \draw [->, very thick, dashed] (BAAA) -- (BAAAB);

    \draw [->, very thick] (BAAB) -- (BAABA);
    \draw [->, very thick] (BAAB) -- (BAABB);

    \draw [->, very thick, dashed] (BABA) -- (BABAA);
    \draw [->, very thick, draw=black] (BABA) -- (BABAB);

    \draw [->, very thick, draw=black] (BABB) -- (BABBA);
    \draw [->, very thick, draw=black] (BABB) -- (BABBB);

    \draw [->, very thick, draw=black] (BBAA) -- (BBAAA);
    \draw [->, very thick, draw=black] (BBAA) -- (BBAAB);

    \draw [->, very thick, draw=black] (BBAB) -- (BBABA);
    \draw [->, very thick, draw=black] (BBAB) -- (BBABB);

    \draw [->, very thick, draw=black] (BBBA) -- (BBBAA);
    \draw [->, very thick, draw=black] (BBBA) -- (BBBAB);

    \draw [->, very thick, draw=black] (BBBB) -- (BBBBA);
    \draw [->, very thick, draw=black] (BBBB) -- (BBBBB);

    \draw [->, very thick] (BAAAA) -- (BAAAAA);
    \draw [->, very thick] (BAAAA) -- (BAAAAB);

    \draw [->, very thick, dashed] (BAAAB) -- (BAAABA);
    \draw [->, very thick, dashed] (BAAAB) -- (BAAABB);

    \draw [->, very thick] (BAABA) -- (BAABAA);
    \draw [->, very thick] (BAABA) -- (BAABAB);

    \draw [->, very thick, dashed] (BAABB) -- (BAABBA);
    \draw [->, very thick] (BAABB) -- (BAABBB);

    \draw [->, very thick] (BABAA) -- (BABAAA);
    \draw [->, very thick] (BABAA) -- (BABAAB);

    \draw [->, very thick] (BABAB) -- (BABABA);
    \draw [->, very thick] (BABAB) -- (BABABB);

    \draw [->, very thick] (BABBA) -- (BABBAA);
    \draw [->, very thick] (BABBA) -- (BABBAB);

    \draw [->, very thick, dashed] (BABBB) -- (BABBBA);
    \draw [->, very thick] (BABBB) -- (BABBBB);

    \draw [->, very thick] (BBAAA) -- (BBAAAA);
    \draw [->, very thick] (BBAAA) -- (BBAAAB);

    \draw [->, very thick] (BBAAB) -- (BBAABA);
    \draw [->, very thick] (BBAAB) -- (BBAABB);

    \draw [->, very thick] (BBABA) -- (BBABAA);
    \draw [->, very thick] (BBABA) -- (BBABAB);

    \draw [->, very thick,dashed] (BBABB) -- (BBABBA);
    \draw [->, very thick] (BBABB) -- (BBABBB);

    \draw [->, very thick] (BBBAA) -- (BBBAAA);
    \draw [->, very thick] (BBBAA) -- (BBBAAB);

    \draw [->, very thick] (BBBAB) -- (BBBABA);
    \draw [->, very thick] (BBBAB) -- (BBBABB);

    \draw[->, very thick]  (BBBBA) -- (BBBBAA);
    \draw[->, very thick]  (BBBBA) -- (BBBBAB);

    \draw[->, very thick]  (BBBBB) -- (BBBBBA);
    \draw[->, very thick]  (BBBBB) -- (BBBBBB);

    \draw [->, very thick,dashed] (C) -- (CA);
    \draw [->, very thick, dashed] (C) -- (CB);

    \draw [->, very thick, draw=black] (CA) -- (CAA);
    \draw [->, very thick, draw=black] (CA) -- (CAB);

    \draw [->, very thick] (CB) -- (CBA);
    \draw [->, very thick] (CB) -- (CBB);

    \draw [->, very thick] (CAA) -- (CAAA);
    \draw [->, very thick,] (CAA) -- (CAAB);

    \draw [->, very thick] (CAB) -- (CABA);
    \draw [->, very thick] (CAB) -- (CABB);

    \draw [->, very thick] (CBA) -- (CBAA);
    \draw [->, very thick] (CBA) -- (CBAB);

    \draw [->, very thick] (CBB) -- (CBBA);
    \draw [->, very thick] (CBB) -- (CBBB);

    \draw [->, very thick] (CAAA) -- (CAAAA);
    \draw [->, very thick] (CAAA) -- (CAAAB);

    \draw [->, very thick] (CAAB) -- (CAABA);
    \draw [->, very thick] (CAAB) -- (CAABB);

    \draw [->, very thick] (CABA) -- (CABAA);
    \draw [->, very thick] (CABA) -- (CABAB);

    \draw [->, very thick] (CABB) -- (CABBA);
    \draw [->, very thick] (CABB) -- (CABBB);

    \draw [->, very thick, draw=black] (CBAA) -- (CBAAA);
    \draw [->, very thick, draw=black, dashed] (CBAA) -- (CBAAB);

    \draw [->, very thick, draw=black] (CBAB) -- (CBABA);
    \draw [->, very thick, draw=black] (CBAB) -- (CBABB);

    \draw [->, very thick, draw=black] (CBBA) -- (CBBAA);
    \draw [->, very thick, draw=black] (CBBA) -- (CBBAB);
    
    \draw [->, very thick, dashed] (CBBB) -- (CBBBA);
    \draw [->, very thick, draw=black] (CBBB) -- (CBBBB);

    \draw [->, very thick] (CAAAA) -- (CAAAAA);
    \draw [->, very thick] (CAAAA) -- (CAAAAB);

    \draw [->, very thick] (CAAAB) -- (CAAABA);
    \draw [->, very thick] (CAAAB) -- (CAAABB);

    \draw [->, very thick, dashed] (CAABA) -- (CAABAA);
    \draw [->, very thick] (CAABA) -- (CAABAB);

    \draw [->, very thick] (CAABB) -- (CAABBA);
    \draw [->, very thick] (CAABB) -- (CAABBB);

    \draw [->, very thick] (CABAA) -- (CABAAA);
    \draw [->, very thick] (CABAA) -- (CABAAB);
    
    \draw [->, very thick] (CABAB) -- (CABABA);
    \draw [->, very thick, dashed] (CABAB) -- (CABABB);

    \draw [->, very thick] (CABBA) -- (CABBAA);
    \draw [->, very thick] (CABBA) -- (CABBAB);

    \draw [->, very thick] (CABBB) -- (CABBBA);
    \draw [->, very thick] (CABBB) -- (CABBBB);

    \draw [->, very thick] (CBAAA) -- (CBAAAA);
    \draw [->, very thick] (CBAAA) -- (CBAAAB);

    \draw [->, very thick] (CBAAB) -- (CBAABA);
    \draw [->, very thick] (CBAAB) -- (CBAABB);

    \draw [->, very thick] (CBABA) -- (CBABAA);
    \draw [->, very thick] (CBABA) -- (CBABAB);

    \draw [->, very thick,dashed] (CBABB) -- (CBABBA);
    \draw [->, very thick] (CBABB) -- (CBABBB);

    \draw [->, very thick] (CBBAA) -- (CBBAAA);
    \draw [->, very thick] (CBBAA) -- (CBBAAB);

    \draw [->, very thick] (CBBAB) -- (CBBABA);
    \draw [->, very thick] (CBBAB) -- (CBBABB);
    
    \draw [->, very thick,dashed] (CBBBA) -- (CBBBAA);
    \draw [->, very thick,dashed] (CBBBA) -- (CBBBAB);

    \draw [->, very thick] (CBBBB) -- (CBBBBA);
    \draw [->, very thick] (CBBBB) -- (CBBBBB);

    \end{tikzpicture}
    
  \end{minipage}
  }
  \caption{Broadcasting to infinity from the root $\rho$, in a coarse-grained description.  
    In picture i), 
    we are sending a ($\mu_A$-typical) signal $\{\sigma_\rho=a\}$ where $a\in A$ under the measure $\mu_A(\cdot|\sigma_\rho=a)$ to infinity. The dashed lines correspond to the $A$-irregular edges and the bold lines correspond to $A$-regular edges. Furthermore, the spins taking values in the localization set $A$ are completely colored in orange and those who take values in the complement $A^c$ are hatched and colored in blue. $A$-regular edges have high density under $\mu_A$ in the sense of Lemma \ref{lem: Key Lemma}.\\
    In picture ii), we are sending a ($\mu_A$-untypical) signal $\{\sigma_\rho=a\}$ where $a\in A^c$ under the measure $\mu_A(\cdot|\sigma_\rho=a)$ to infinity. The initial configuration will most likely fall into the localization set $A$, namely with probability $P_A(a,A)$, however with possibly different target states, from which it will continue to the outside as in picture i)}
    \label{fig: broadcasting on the tree}
\end{figure}
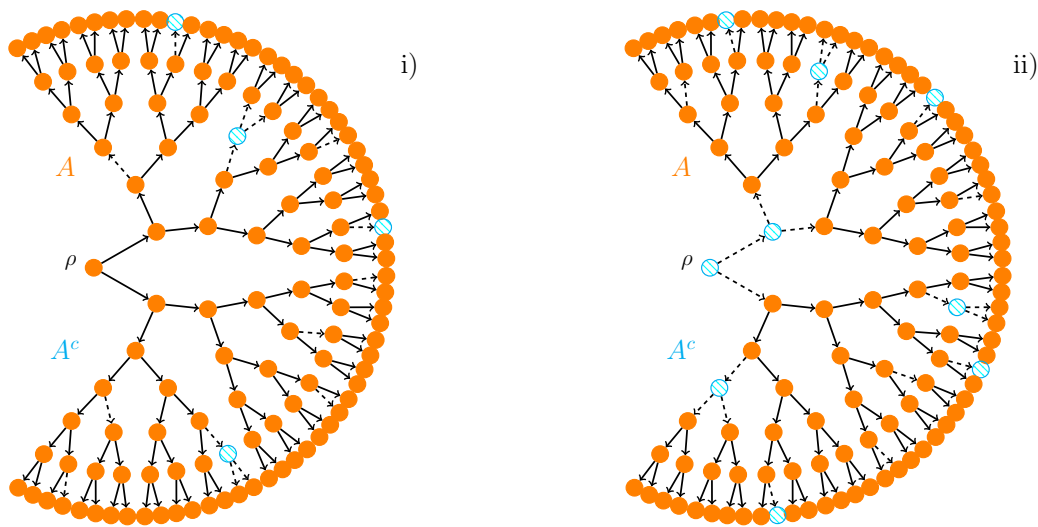

The proof of this theorem is a direct consequence of Lemma \ref{lem: Key Lemma}.

\begin{proof}
Let us define
\begin{equation*}
    D:=\{\omega\in \Omega:~\pi(\sigma_\rho=\omega_\rho|\omega)< 1-\epsilon_1(\beta)\}.
\end{equation*}
    If $\omega\in (B_{\rho,A})^c$, we have
    \begin{equation*}
        \pi(\sigma_\rho= \omega_\rho|\omega)\geq 1-\epsilon_1(\beta)
    \end{equation*}
    for $\mu_A$-almost every $\omega$ due to Lemma \ref{lem: Key Lemma} and hence $(B_{\rho,A})^c\subset D^c$. Consequently,
    \begin{equation*}
        \mu_A(D|\sigma_\rho=a)\leq \mu_A(B_{\rho,A}|\sigma_\rho=a)\leq \frac{\mu_A(B_{\rho,A})}{\mu_A(\sigma_\rho=a)}
    \end{equation*}
    and hence \eqref{eq: reconstruction bound inequality} after using part $b)$ of Lemma \ref{lem: Key Lemma} and Inequality (iv) of Proposition \ref{prop: Bounds A-loc. states} together with $(1-(C_1+C_2)e^{-\beta u})>\frac{1}{2}$.
\end{proof}

In order to show that $\mu_A$ is not extremal, we will consider the Edwards-Anderson parameter which is defined as
\begin{equation}\label{def: Edwards-Anderson parameter}
    q^{\mu_A}_{\text{EA}}:=\frac{1}{q}\sum_{a\in \Z_q}\text{Var}_{\mu_A}\left(\pi(\sigma_0=a|\cdot)\right).
\end{equation}
This quantity measures the degree of randomness of the spin magnetisation at the origin under the influence of the boundary condition at infinity and is thus a good quantity to measure the non-extremality of $\mu_A$.

\begin{theorem}[Non-extremality of $\mu_A$]\label{thm: Non-extremality of mu_A}

Consider a $\Z_q$-valued nearest-neighbor clock model with Hamiltonian given in \eqref{def: Hamiltonian} and a potential defined in \eqref{eq: Potential ferr n.n. model} fulfilling the $u,U,d$-bounds in \eqref{eq: u,U-bounds}. Furthermore, let $A\subsetneq \Z_q$ with $|A|\geq 2$ and $\beta$ be large enough. Then, the Edwards-Anderson parameter, defined in \eqref{def: Edwards-Anderson parameter}, corresponding to the $A$-localized state $\mu_A$ for this model, see Proposition \ref{prop: Bounds A-loc. states}, is bounded as follows 
    \begin{equation}\label{eq: Bound Edward Anderson}
        q^{\mu_A}_{\text{EA}}\geq \frac{1}{2q}\bigg((1-\epsilon_1(\beta))^2(1-2|A|\epsilon_2(\beta))-\frac{|A|-1}{|A|}\bigg)
    \end{equation}
    where $\epsilon_1(\beta)\downarrow 0$ and $\epsilon_2(\beta)\downarrow 0$ as $\beta\uparrow \infty$ and the quantities are given in Lemma \ref{lem: Key Lemma}. In particular, we obtain 
    \begin{equation*}
        q^{\mu_A}_{\text{EA}}>0
    \end{equation*}
    for large enough $\beta$ and thus the non-extremality of $\mu_A$.
\end{theorem}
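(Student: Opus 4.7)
The plan is to deduce positivity of $q^{\mu_A}_{\text{EA}}$ from the single-site reconstruction bound \eqref{eq: reconstruction bound inequality} and then invoke the equivalence between extremality of a Gibbs measure and triviality on its tail $\sigma$-algebra. Writing $p_a(\omega):=\pi(\sigma_0=a|\omega)$ and recalling that $\pi(\cdot|\omega)$ is a regular version of the conditional probability of $\mu_A$ given the tail, we obtain the tower identities $\E_{\mu_A}[p_a]=\pi_A(a)$ and $\E_{\mu_A}[p_a^2]=\pi_A(a)\,\E_{\mu_A}[p_a\mid\sigma_0=a]$. Since $\text{Var}_{\mu_A}(p_a)\geq 0$ always, only the contributions from $a\in A$ need to be retained in the sum defining $q^{\mu_A}_{\text{EA}}$.

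The central estimate is obtained by introducing, for $a\in A$, the reconstruction-success event $E_a:=\{\omega:p_a(\omega)\geq 1-\epsilon_1(\beta)\}$, on which $p_a^2\geq(1-\epsilon_1(\beta))^2$. Combining this with Theorem \ref{thm: mu_A reconstruction theorem} gives
\begin{equation*}
\E_{\mu_A}[p_a^2]\geq(1-\epsilon_1(\beta))^2\,\mu_A(E_a\cap\{\sigma_0=a\})\geq(1-\epsilon_1(\beta))^2\,\pi_A(a)\bigl(1-2|A|\epsilon_2(\beta)\bigr).
\end{equation*}
Summing over $a\in A$ and using the lower bound $\pi_A(A)\geq 1-C_1 e^{-(d+1)\beta u}$ from Proposition \ref{prop: Bounds A-loc. states} yields
\begin{equation*}
\sum_{a\in A}\E_{\mu_A}[p_a^2]\geq(1-\epsilon_1(\beta))^2\bigl(1-2|A|\epsilon_2(\beta)\bigr)\bigl(1-C_1 e^{-(d+1)\beta u}\bigr).
\end{equation*}

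For the subtracted piece $\sum_{a\in\Z_q}\pi_A(a)^2$, the bound $\pi_A(a)\leq(1-C_2 e^{-\beta u})^{-1}/|A|$ for $a\in A$ from Proposition \ref{prop: Bounds A-loc. states}, together with the fast decay $\pi_A(A^c)\leq C_1 e^{-(d+1)\beta u}$, controls the sum of squares and yields $\sum_{a}\pi_A(a)^2\leq(|A|-1)/|A|$ for all $\beta$ large enough (with significant room to spare, since the limit is $1/|A|$). Assembling the pieces with the overall prefactor $1/q$ from the definition of $q^{\mu_A}_{\text{EA}}$ and absorbing the exponentially small correction factors into the extra $1/2$ in front yields \eqref{eq: Bound Edward Anderson}. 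Because $\epsilon_1(\beta),\epsilon_2(\beta)\downarrow 0$ as $\beta\uparrow\infty$ while $(|A|-1)/|A|<1$, the right-hand side of \eqref{eq: Bound Edward Anderson} is strictly positive for $\beta$ sufficiently large.

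Non-extremality then follows from a standard general fact: a Gibbs measure is extremal if and only if it is trivial on the tail $\sigma$-algebra (see \cite{Ge11} or \hyperref[Sec: Appendix A]{Appendix A}). If $\mu_A$ were extremal, each $p_a$ would be $\mu_A$-almost surely equal to its expectation $\pi_A(a)$, forcing every variance in the definition of $q^{\mu_A}_{\text{EA}}$ to vanish, in contradiction with strict positivity. The only delicate point is the careful bookkeeping of the exponentially small error terms from Proposition \ref{prop: Bounds A-loc. states} so as to land on the precise clean form stated in \eqref{eq: Bound Edward Anderson}; everything else is an algebraic consequence of Theorem \ref{thm: mu_A reconstruction theorem}.
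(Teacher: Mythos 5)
Your argument follows essentially the same route as the paper's proof: restrict the sum defining $q^{\mu_A}_{\text{EA}}$ to $a\in A$, intersect with the reconstruction-success event, apply Theorem \ref{thm: mu_A reconstruction theorem}, and compare against the single-site marginal bounds of Proposition \ref{prop: Bounds A-loc. states}. One step deserves a caveat, which is in fact shared by the paper's own wording: the claim $\sum_a\pi_A(a)^2\leq(|A|-1)/|A|$ justified by ``significant room to spare since the limit is $1/|A|$'' has room only when $|A|\geq 3$; for $|A|=2$ the limit $1/|A|$ coincides with the target $1/2$, and Proposition \ref{prop: Bounds A-loc. states}$(iv)$ alone allows $\max_{a\in A}\pi_A(a)$ to exceed $1/|A|$ by an exponentially small amount, so neither your inequality nor the paper's parallel requirement $\max_{a\in A}\mu_A(\sigma_0=a)\leq(|A|-1)/|A|$ follows directly from $(iv)$ in the case $|A|=2$ (one may either invoke the reflection symmetry of $\mu_A$ for two-element $A$, which forces $\pi_A(a_1)=\pi_A(a_2)<1/2$, or keep the small correction explicit). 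In any event the positivity $q^{\mu_A}_{\text{EA}}>0$ and hence non-extremality are unaffected, as a looser but always-valid version of the bound still tends to $(1-1/|A|)/q>0$ as $\beta\to\infty$.
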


\begin{proof}
    First of all, note that by dropping spin-values outside of $A$, we obtain
    \begin{equation*}
        q^{\mu_A}_{\text{EA}} \geq \frac{1}{q}\sum_{a\in A}\Big(\mu_A\left(\pi(\sigma_0=a|\cdot)^2\right)-\mu_A(\sigma_0=a)^2\Big).
    \end{equation*}
    Rewriting this expression leads to 
    \begin{align}\label{eq: Bound Proof EA Parameter}
        \nonumber&\frac{1}{q}\sum_{a\in A}\left(\sum_{b\in \Z_q} \mu_A(\sigma_0=b)\mu_A\left(\pi(\sigma_0=a|\cdot)^2\big| \sigma_0=b\right)-\mu_A(\sigma_0=a)^2\right)\\
        &\geq \frac{1}{q}\sum_{a\in A} \left(\mu_A(\sigma_0=a)\mu_A\left(\pi(\sigma_0=a|\cdot)^2\big|\sigma_0=a\right)-\mu_A(\sigma_0=a)^2\right)
    \end{align}
    where we kept the term for $b=a$ in the lower bound. In order to use the reconstruction bound \eqref{eq: reconstruction bound inequality}, we restrict our consideration on the case where $\pi(\sigma_0=a|\cdot)\geq 1-\epsilon_1(\beta)$. In this way, we obtain the following lower bound for \eqref{eq: Bound Proof EA Parameter}
    \begin{equation*}
        \frac{1}{q}\sum_{a\in A}\bigg(\mu_A(\sigma_0=a)\mu_A\big(\pi(\sigma_0=a|\cdot)^2 \mathds{1}_{\pi(\sigma_0=a|\cdot)\geq 1-\epsilon_1(\beta)}|\sigma_0=a\big)-\mu_A(\sigma_0=a)^2\bigg).
    \end{equation*}
    By applying the reconstruction bound \eqref{eq: reconstruction bound inequality} and comparing one factor of the negative terms in the sum with its maximum over $A$, this term is lower bounded by 
    \begin{equation*}
        \bigg((1-\epsilon_1(\beta))^2(1-2|A|\epsilon_2(\beta))-\max_{a\in A} \mu_A(\sigma_0=a)\bigg)\frac{1}{q}\sum_{a\in A}\mu_A(\sigma_0=a).
    \end{equation*}
    Choosing $\beta$ large enough such that $(1-C_2e^{-\beta u})^{-1}<(|A|-1)$ and $(1-(C_1+C_2)e^{-\beta u})> \frac{1}{2}$ in combination with the bounds for the single-site marginal in Proposition \ref{prop: Bounds A-loc. states}, results in the desired bound given in \eqref{eq: Bound Edward Anderson}.

    Recall that this implies the non-extremality of $\mu_A$: From the fact that $q_{EA}^{\mu_A}>0$, it follows that there exists at least one $a\in \Z_q$ such that $\text{Var}_{\mu_A}\left(\pi(\sigma_\rho=a|\cdot)\right)>0$ and hence $\pi(\sigma_\rho=a|\cdot)$ is not constant $\mu_A$-almost surely. However, $\pi(\sigma_\rho=a|\cdot)$ is $\mathscr{F}_\infty$-measurable and thus $\mu_A$ can not be an extremal Gibbs measure. 
\end{proof}

\subsection{Multi-site reconstruction}\label{subsec: Multi-site reconstruction}

Our goal is to prove Theorem \ref{thm: A.s. singularity extremals}, which states that the two extremals $\pi(\cdot|\omega)$ and $\pi(\cdot|\omega')$ are singular for independently chosen $\mu_A$-typical boundary conditions $\omega$ and $\omega'$ at infinity. The idea in \cite{CoKuLe24} to show a respective statement in their context was to consider a random variable which is able to distinguish typical extremals. In more detail, they considered the variable $\underline{\phi}^\omega$ which counts the matches of a realization $\sigma$ with $\omega$ along thinned enough branches of the tree, see \eqref{eq: thinned branch overlap}. The reason why this variable works also for our purposes is that Lemma \ref{lem: Key Lemma} does not only imply reconstruction on single vertices but also on these thinned branches, see Theorem \ref{thm: Branch overlap}. More explicitly, the expectation $\pi(\underline{\phi}^\omega|\omega)$ will be large for $\mu_A$-typical boundary conditions $\omega$ at infinity. This will be discussed in the first part of this subsection and the second part deals then with its application to prove $\pi(\underline{\phi}^\omega|\omega)>\pi(\underline{\phi}^{\omega}|\omega')$ in Corollary \ref{cor: distinct extremals}. This implies directly the almost-sure singularity of the $\pi$-kernels entering the extremal decomposition of the $A$-localized states (Theorem \ref{thm: A.s. singularity extremals}).  

\subsubsection{Reconstruction on thinned branches}

Let us start by giving the definition of a thinned branch and its related random variable $\underline{\phi}^\omega$.

\begin{definition}
Let $r=(r_i)_{i\in \N}$ be an increasing sequence with $r_i\in \N$. Then, define a \uline{thinned branch} as a sequence $(\Lambda^r_n)_{n\in \N}$ of subsets $\Lambda^r_n:=\{v_1,v_2,...,v_{n^2}\} \subset V$ where the vertices $v_i$ are chosen along a branch of the tree such that $d(v_{i},v_{i+1})=:r_i$ for all $i\in \{1,...,n^2\}$ and for all $n\in \N$. For a given thinned branch $(\Lambda^r_n)_{n\in \N}$ and a fixed configuration $\omega\in \Omega$, define the random variable
\begin{equation}\label{eq: thinned branch overlap}
    \underline{\phi}^\omega:=\liminf_{n\uparrow \infty} \frac{1}{|\Lambda_n^r|}\sum_{v\in \Lambda^r_n}\mathds{1}_{\sigma_v=\omega_v}
\end{equation}
which is called \uline{thinned branch-overlap}.
\end{definition}

$\underline{\phi}^\omega$ is a tail-measurable observable with values in the interval $[0,1]$ which quantifies how much the configuration $\sigma$ matches with $\omega$ on an increasing sparse volume $\Lambda_n^r$. The following theorem describes that the $\pi$-kernel $\pi(\cdot|\omega)$ is able to reconstruct the signal $\omega$ on a thinned branch $\Lambda_n^r$ which was send under the measure $\mu_A$ to the outside. 

\begin{theorem}[Multi-site reconstruction] \label{thm: Branch overlap}
    Let $A\subsetneq \Z_q$ with $|A|\geq 2$. Consider the $A$-localized state $\mu_A$, see Proposition \ref{prop: Bounds A-loc. states}, for the class of models with Hamiltonian of the form \eqref{def: Hamiltonian} and a potential defined in \eqref{eq: Potential ferr n.n. model} fulfilling the $u,U,d$-bounds in \eqref{eq: u,U-bounds}. Then, the thinned branch-overlap $\underline{\phi}^\omega$ is $\pi(\cdot|\omega)$-a.s. lower bounded by 
    \begin{equation*}
        \underline{\phi}^\omega=\liminf_{n\uparrow \infty}\frac{1}{|\Lambda_n^r|}\sum_{v\in \Lambda_n^r}\mathds{1}_{\sigma_v=\omega_v}\geq 1- \epsilon_1(\beta)-\epsilon_2(\beta)
    \end{equation*}
    for sparse enough sets $\Lambda_n^r$ and $\mu_A$-a.e. $\omega\in \Omega$. The quantities $\epsilon_1$ and $\epsilon_2$ are given in Lemma \ref{lem: Key Lemma} and they satisfy $\epsilon_1(\beta)\downarrow 0$ as well as $\epsilon_2(\beta)\downarrow 0$ for $\beta\uparrow \infty$.
\end{theorem}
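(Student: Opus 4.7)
The strategy upgrades the single-site mean bound of Lemma~\ref{lem: Key Lemma} to a $\pi(\cdot|\omega)$-almost sure lower bound on the Ces\`aro average $S_n(\sigma):=|\Lambda_n^r|^{-1}\sum_{v\in\Lambda_n^r}\mathds{1}_{\sigma_v=\omega_v}$ along the thinned branch. I would first note that $\underline{\phi}^\omega$ is tail-measurable in $\sigma$ for every fixed $\omega$, since $|\Lambda_n^r|\to\infty$ and so changing finitely many coordinates of $\sigma$ leaves the $\liminf$ of these Ces\`aro averages invariant. Because $\pi(\cdot|\omega)$ is extremal, hence trivial on the tail, for $\mu_A$-almost every $\omega$, the overlap $\underline{\phi}^\omega$ is $\pi(\cdot|\omega)$-almost surely equal to a constant, namely to its own $\pi(\cdot|\omega)$-expectation. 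It thus suffices to show $\pi(\cdot|\omega)$-almost sure convergence $S_n\to s$ with $s\geq 1-\epsilon_1(\beta)-\epsilon_2(\beta)$ for $\mu_A$-a.e.\ $\omega$.

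The conditional mean of $S_n$ is handled immediately by part~a) of Lemma~\ref{lem: Key Lemma}: for $\mu_A$-a.e.\ $\omega$,
\begin{equation*}
\pi(S_n|\omega)\geq 1-\epsilon_1(\beta)-\frac{1}{|\Lambda_n^r|}\sum_{v\in\Lambda_n^r}\mathds{1}_{B_{A,v}}(\omega).
\end{equation*}
The density of bad sites along the thinned branch is then controlled under $\mu_A$ by part~b) of the lemma: the bad event $B_{A,v}$ has $\mu_A$-mass at most $\epsilon_2(\beta)$, and, by choosing the distances $r_i$ growing fast enough, the dominant (short-contour) parts of $B_{A,v_i}$ and $B_{A,v_j}$ sit in disjoint finite neighborhoods, so that the Markov-chain structure of $\mu_A$ along a ray yields near-independence up to an exponentially small tail correction. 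A Chebyshev plus first Borel--Cantelli estimate, exploiting the choice $|\Lambda_n^r|=n^2$ for summability, gives
\begin{equation*}
\limsup_{n\to\infty}\frac{1}{|\Lambda_n^r|}\sum_{v\in\Lambda_n^r}\mathds{1}_{B_{A,v}}(\omega)\leq \epsilon_2(\beta)\qquad \text{for $\mu_A$-a.e.\ }\omega,
\end{equation*}
hence $\liminf_n \pi(S_n|\omega)\geq 1-\epsilon_1(\beta)-\epsilon_2(\beta)$ for $\mu_A$-a.e.\ $\omega$.

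Finally, to pass from the conditional mean to $\pi(\cdot|\omega)$-almost sure convergence of $S_n$ itself, I would bound $\operatorname{Var}_{\pi(\cdot|\omega)}(S_n)$ by a second-moment estimate. Taking $(r_i)$ to grow fast enough, the two-site disagreement covariances $\pi(\sigma_{v_i}\ne\omega_{v_i},\sigma_{v_j}\ne\omega_{v_j}|\omega)-\pi(\sigma_{v_i}\ne\omega_{v_i}|\omega)\pi(\sigma_{v_j}\ne\omega_{v_j}|\omega)$ decay exponentially in the tree-distance $d(v_i,v_j)$, by re-running the $A$-irregular coarse-graining Peierls argument of Section~\ref{sec: Key lemma} on pairs of incorrect points rather than single vertices. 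This yields $\operatorname{Var}_{\pi(\cdot|\omega)}(S_n)=O(1/|\Lambda_n^r|)$, a summable bound, and another Borel--Cantelli step promotes convergence in probability to the a.s.\ statement. Combined with the mean bound this gives $\underline{\phi}^\omega\geq 1-\epsilon_1(\beta)-\epsilon_2(\beta)$, $\pi(\cdot|\omega)$-a.s., for $\mu_A$-a.e.\ $\omega$. The main technical obstacle is this last pairwise covariance estimate under the extremal $\pi(\cdot|\omega)$: unlike $\mu_A$, the extremal $\pi(\cdot|\omega)$ is generically spatially inhomogeneous and need not be a Markov chain, so no stationary ergodic theorem is available. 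The $A$-irregular coarse-graining of the Key Lemma is the tool tailored to overcome this, mirroring the strategy of \cite{CoKuLe24} with the lazyness input there replaced by the $A$-irregularity decomposition developed in Section~\ref{sec: Key lemma}.
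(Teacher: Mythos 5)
Your overall structure is sound and closely matches the paper: bound the $\pi(\cdot|\omega)$-mean of the Ces\`aro average by Lemma~\ref{lem: Key Lemma}a), control the $\mu_A$-density of $A$-irregular bad sites $\mathds{1}_{B_{A,v}}$ along the thinned branch via Lemma~\ref{lem: Key Lemma}b) plus Chebyshev and Borel--Cantelli, and then upgrade the conditional mean bound to a $\pi(\cdot|\omega)$-a.s.\ statement by a second-moment argument along the sparse sequence $|\Lambda_n^r|=n^2$. Two remarks. First, your opening observation that $\underline{\phi}^\omega$ is tail-measurable, hence $\pi(\cdot|\omega)$-a.s.\ constant, is correct but gives you nothing by itself: identifying that constant with a useful lower bound still requires Fatou in the wrong direction (you only get $\E[\liminf S_n]\le\liminf\E[S_n]$), so you end up needing the a.s.\ convergence anyway, which you do acknowledge. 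This is essentially what the paper packages as Lemma~\ref{lem: Concentration of spin overlaps under tp extr.}.

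The genuine gap is in the final and hardest step, the covariance decay under $\pi(\cdot|\omega)$. You propose to obtain $\pi(\sigma_{v_i}\ne\omega_{v_i};\sigma_{v_j}\ne\omega_{v_j}|\omega)$ decaying exponentially in $d(v_i,v_j)$ by re-running the $A$-irregular Peierls/coarse-graining argument on pairs. But the estimates of Section~\ref{sec: Key lemma} control $\mu_A$ via its homogeneous Markov-chain transition matrix $P_A$; they are not estimates on the generically inhomogeneous, non-Markov kernel $\pi(\cdot|\omega)$. Part~a) of the Key Lemma delivers a one-point upper bound $\pi(\sigma_v\ne\omega_v|\omega)\le\mathds{1}_{B_{A,v}}(\omega)+\epsilon_1(\beta)$, which yields smallness of each marginal but not, as stated, decay of two-point correlations with distance; a pair Peierls bound would have to separately control the joint weight of two far-apart contour clusters under $\pi(\cdot|\omega)$, and nothing in Section~\ref{sec: Key lemma} provides that. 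What the paper actually invokes is the abstract decorrelation of far-separated events under extremal Gibbs measures (tail-triviality of $\pi(\cdot|\omega)$ for $\mu_A$-typical $\omega$, see \cite{Ge11}): one then chooses the spacings $r_i$ to grow fast enough so that these covariances become summable. This is weaker than exponential decay but is all that is needed, and crucially it is a theorem, not something requiring a new Peierls bound. You should replace your pairwise-Peierls claim by this extremality argument (the freedom to \emph{choose} the thinning sufficiently sparse is precisely why the theorem is stated ``for sparse enough $\Lambda_n^r$''). The remaining parts — the $\mu_A$-a.s.\ convergence of $\frac{1}{|\Lambda_n^r|}\sum_v\mathds{1}_{B_{A,v}}(\omega)$ to $\mu_A(B_{A,\rho})\le\epsilon_2(\beta)$ — correctly require the $\mu_A$-side covariance control, which is Lemma~\ref{lem: Decorrelation of bad events}, and there the Markov structure and Key Lemma smallness \emph{are} the right tools, as you indicate.
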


We will give a shortened proof by explaining the important steps that have to be done and where one has to pay attention to the properties of the $A$-localized states. For more details about the proof, see \cite{CoKuLe24}. First of all, one is able to relate the variable $\underline{\phi}^\omega$ to the $\pi$-kernels in the following sense. 

\begin{lemma}\label{lem: Concentration of spin overlaps under tp extr.}
     There exists a sequence of spacings $r=(r_i)_{i\in \N}$, in general depending on the model parameter, such that for any thinned branch $(\Lambda_n^r)_{n\in \N}$ with $\sum^\infty_{n=1}\frac{1}{|\Lambda_n^r|}<\infty$, the following holds. For $\mu_A$-almost every $\omega\in \Omega$, for $\pi(\cdot|\omega)$-almost every realization of $\sigma$, we have 
     \begin{equation}\label{eq: Concentration spin-overlaps under typ extr.}
         \limsup_{n\uparrow \infty}\frac{1}{|\Lambda_n^r|}\bigg|\sum_{v\in \Lambda^r_n}\bigg(\mathds{1}_{\sigma_v\neq \omega_v}-\pi(\sigma_v \neq \omega_v|\omega)\bigg)\bigg|=0.
     \end{equation}
\end{lemma}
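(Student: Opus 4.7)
The plan is to establish the lemma via an $L^2$-moment estimate combined with the first Borel--Cantelli lemma, in the spirit of the analogous step in \cite{CoKuLe24} for the free state. Introduce $X_v(\omega,\sigma) := \mathds{1}_{\sigma_v \neq \omega_v} - \pi(\sigma_v \neq \omega_v | \omega)$ and $Z_n := |\Lambda_n^r|^{-1}\sum_{v \in \Lambda_n^r} X_v$; by construction the $\pi(\cdot|\omega)$-expectation of each $X_v$ vanishes for every $\omega$. The target is to upgrade this conditional mean-zero property to the $\pi(\cdot|\omega)$-almost sure convergence $Z_n \to 0$ for $\mu_A$-a.e.\ $\omega$, which is exactly the quantity inside the absolute value in \eqref{eq: Concentration spin-overlaps under typ extr.}.

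I would pass to the joint measure $\nu(d\omega, d\sigma) := \mu_A(d\omega)\,\pi(d\sigma|\omega)$ and bound the annealed second moment
\begin{equation*}
    \mathbb{E}_\nu[Z_n^2] = \frac{1}{|\Lambda_n^r|^2}\sum_{v, w \in \Lambda_n^r}\int \mu_A(d\omega)\,\text{Cov}_{\pi(\cdot|\omega)}\bigl(\mathds{1}_{\sigma_v \neq \omega_v},\, \mathds{1}_{\sigma_w \neq \omega_w}\bigr),
\end{equation*}
using conditional centering. The diagonal contribution is at most $|\Lambda_n^r|^{-1}$ trivially. For the off-diagonal terms, I would exploit that for a nearest-neighbor specification on a tree every extremal Gibbs measure is an (inhomogeneous) tree-indexed Markov chain, so that two-point correlations of single-site functions under $\pi(\cdot|\omega)$ are controlled by a product of at most $d(v,w)$ local transition matrices. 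By Lemma \ref{lem: Key Lemma} these local kernels are, for $\mu_A$-typical $\omega$, small perturbations of the homogeneous $P_A$ from Proposition \ref{prop: Bounds A-loc. states}, whose strictly positive entries give a uniform spectral gap. This yields an exponential decay rate $\lambda < 1$ for the covariances in $d(v,w)$; choosing the spacings $r_i$ so that $\sum_w \lambda^{d(v_i,w)}$ stays uniformly bounded along the branch then gives $\mathbb{E}_\nu[Z_n^2] \leq C/|\Lambda_n^r|$. Combined with the hypothesis $\sum_n |\Lambda_n^r|^{-1} < \infty$, Chebyshev's inequality and the first Borel--Cantelli lemma force $Z_n \to 0$ $\nu$-a.s., which by disintegration is the claim.

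The main obstacle is the non-locality of $X_v$ in $\omega$: the quantity $m_v(\omega) := \pi(\sigma_v \neq \omega_v | \omega)$ is a tail-measurable functional of the full configuration $\omega$ rather than a function of $\omega$ near $v$, so the off-diagonal covariances above cannot be read off directly from the Markov-chain kernel of $\mu_A$. This is precisely where Lemma \ref{lem: Key Lemma} is essential: on the complement of the bad event $B_{A,v} \cup B_{A,w}$, whose $\mu_A$-measure is at most $2\epsilon_2(\beta)$, one has $m_v, m_w \leq \epsilon_1(\beta)$ and the perturbative Markov-chain argument above applies; on the bad event itself one falls back on the trivial bound $|X_v X_w| \leq 1$, whose contribution is absorbed into the error term.
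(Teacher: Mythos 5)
Your overall scheme — reduce to a second-moment bound, apply Chebyshev and Borel--Cantelli, and show summability of covariances along the thinned branch — is the same as the paper's (and as in \cite{CoKuLe24}). The two technical differences are: you pass to the annealed second moment under $\nu = \mu_A \otimes \pi(\cdot|\cdot)$, while the paper works with the quenched variance under $\pi(\cdot|\omega)$ for fixed $\mu_A$-typical $\omega$; and you attempt an explicit Markov-chain covariance bound, while the paper simply invokes the qualitative short-range-correlation property of extremal Gibbs measures (Theorem~7.7 in \cite{Ge11}) for $\pi(\cdot|\omega)$, and then chooses the spacings $r_i$ large enough. The annealed viewpoint is a legitimate and possibly cleaner alternative, since it sidesteps the question of whether the decay rate of covariances under $\pi(\cdot|\omega)$ is uniform in $\omega$.

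The genuine gap is in how you establish covariance decay. You assert that, by Lemma~\ref{lem: Key Lemma}, the local transition kernels of the inhomogeneous chain $\pi(\cdot|\omega)$ are ``small perturbations of the homogeneous $P_A$'' and then use the spectral gap of $P_A$. Lemma~\ref{lem: Key Lemma} does not say this: part~(a) controls the single-site marginals $\pi(\sigma_v\neq\omega_v|\omega)$, and part~(b) controls $\mu_A(B_A(\gamma))$; neither gives information about the edge kernels of $\pi(\cdot|\omega)$. In fact what marginal concentration would heuristically yield is not proximity to $P_A$ but proximity to a degenerate kernel concentrated at $\omega_v$, a different (and in principle more useful) structure, and even that deduction would require controlling the chain at \emph{every} vertex along the path from $v$ to $w$ — not merely at the endpoints $v,w$ as in your good-set / bad-set split — which is problematic because the spacings $r_i$ grow, so these paths contain arbitrarily many vertices, some of which may lie in the bad events. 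The spectral gap of $P_A$, which you cite via the strict positivity in Proposition~\ref{prop: Bounds A-loc. states}, is simply not the relevant object for the covariances of $\pi(\cdot|\omega)$; the paper avoids this entire issue by relying on the soft decorrelation of far-separated events under extremal measures and taking the spacings sufficiently sparse.
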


Its proof uses the Borel-Cantelli lemma and Chebychev's inequality. In this way, it remains to show the summability of covariances of the form $\pi(\sigma_v\neq \omega_v;\sigma_u\neq \omega_u|\omega)$ where $u,v\in \Lambda_n^r$. Due to the fact that $\omega\in \Omega$ is $\mu_A$-typical, $\pi(\cdot|\omega)$ is an extremal Gibbs measure, see \hyperref[Sec: Appendix A]{Appendix A}. Consequently, one can choose a thinned enough branch and uses the decorrelation of far enough separated events under extremal Gibbs measures, see for example \cite{Ge11}, to prove the desired summability.

With the results of Lemma \ref{lem: Concentration of spin overlaps under tp extr.} and Lemma \ref{lem: Key Lemma} a), we have so far
\begin{equation}\label{eq: Bounds for thinned branch overlap 1}
    \underline{\phi}^\omega=1-\limsup_{n\uparrow \infty} \frac{1}{|\Lambda^r_n|}\sum_{v\in \Lambda^r_n}\pi(\sigma_v\neq \omega_v|\omega)\geq 1-\liminf_{n\uparrow \infty} \frac{1}{|\Lambda^r_n|}\sum_{v\in \Lambda^r_n}\mathds{1}_{B_{A,v}}(\omega) -\epsilon_1(\beta)\\
\end{equation}
$\pi(\cdot|\omega)$-almost surely for $\mu_A$-almost every $\omega$. Furthermore, one can say more about the above sum in the lower bound.

\begin{lemma}\label{lem: A.s. convergence emp. mean of bad vertices}
    For any thinned branch $(\Lambda^r_n)_{n\in \N}$ such that $\sum_{n=1}^\infty\frac{1}{|\Lambda^r_n|}<\infty$ and for $\mu_A$-almost every $\omega$, the following limit holds
    \begin{equation*}
        \lim_{n\uparrow \infty} \frac{1}{|\Lambda^r_n|} \sum_{v\in \Lambda_n^r}\mathds{1}_{B_{A,v}}(\omega)=\mu_A(B_{A,\rho})
    \end{equation*}
    where $B_{A,v}$ was defined in \eqref{eq: Bad events} for $v\in V$.
\end{lemma}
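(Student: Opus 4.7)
The plan is a classical $L^{2}$-Chebyshev/Borel-Cantelli argument, in the spirit of the corresponding step in \cite{CoKuLe24}. By spatial homogeneity of the tree-indexed Markov chain $\mu_A$, every vertex satisfies $\mu_A(B_{A,v})=\mu_A(B_{A,\rho})$, which identifies the candidate limit. It therefore suffices to establish
$$\sum_{n=1}^\infty \frac{1}{|\Lambda_n^r|^{2}}\, \mathrm{Var}_{\mu_A}\Bigl(\sum_{v\in\Lambda_n^r}\mathds{1}_{B_{A,v}}\Bigr) \;<\; \infty,$$
so that Chebyshev's inequality combined with Borel-Cantelli gives $|\Lambda_n^r|^{-1}\sum_{v\in\Lambda_n^r} \mathds{1}_{B_{A,v}}\to \mu_A(B_{A,\rho})$ for $\mu_A$-almost every $\omega$.

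The main step is to control the pair covariances $\mathrm{Cov}_{\mu_A}(\mathds{1}_{B_{A,u}},\mathds{1}_{B_{A,v}})$ for distinct $u,v\in\Lambda_n^r$. Since $B_{A,v}=\bigcup_{\bar\gamma:\,v\in\gamma}B_A(\gamma)$ is a countable union over contours of arbitrary size, it is not a local event, so a truncation is needed. Introducing for $R\in\N$ the local event
$$B_{A,v}^{\le R}:=\bigcup_{\bar\gamma:\, v\in\gamma,\, |\gamma|\le R} B_A(\gamma),$$
which depends only on the spins in the ball of radius $R+1$ around $v$, one controls the truncation error by Lemma \ref{lem: Key Lemma} b) together with a standard entropy bound of the form $\kappa^{k}$ on the number of connected subtrees of size $k$ containing $v$ on a $(d+1)$-regular tree. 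This yields
$$\mu_A\bigl(B_{A,v}\setminus B_{A,v}^{\le R}\bigr)\;\le\; C_4\sum_{k>R} \kappa^{k}\, e^{-\lambda(\beta)\,k},$$
which decays exponentially in $R$ as soon as $\beta$ is large enough that $\lambda(\beta)>\log\kappa$.

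For the truncated local parts $B_{A,v}^{\le R}$, whenever the balls of radius $R+1$ around $u$ and $v$ are disjoint, the tree-indexed Markov chain property of $\mu_A$ yields exponential decorrelation: conditioning on the spin at the unique vertex on the geodesic from $u$ to $v$ separates the two local regions, and averaging out via powers of the transition matrix $P_A$ (whose strict positivity is ensured by Proposition \ref{prop: Bounds A-loc. states}) produces a covariance bound decaying exponentially in the geodesic distance through the spectral gap of $P_A$. Taking the spacings $(r_i)$ sufficiently large, which is permitted by Lemma \ref{lem: Concentration of spin overlaps under tp extr.}, and letting the truncation radius $R(n)$ grow appropriately with $n$, both the truncation error and the Markov-chain decorrelation error can be made summable against $|\Lambda_n^r|^{-2}$.

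The principal obstacle is the simultaneous coordination of three parameter scales --- the thinned branch spacings $r_i$, the truncation radius $R(n)$, and the volume size $|\Lambda_n^r|=n^{2}$ --- so that the $L^{2}$ bound above becomes summable. Structurally this is the same argument as in \cite{CoKuLe24}, but one must verify that the Peierls-type bound supplied by our Lemma \ref{lem: Key Lemma} b), which replaces the laziness-based Peierls bound of the free-state setting, survives the truncation and propagates cleanly through the Markov-chain decorrelation step.
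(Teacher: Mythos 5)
Your proposal is correct and takes essentially the same route as the paper: Chebyshev plus Borel--Cantelli reduced to a covariance estimate for the bad events, which is proved by splitting contours into a local part (decorrelated via the Markov chain property and Perron--Frobenius/spectral gap for the strictly positive $P_A$) and a long-range part (controlled by the Peierls-with-errors bound from Lemma~\ref{lem: Key Lemma}~b) together with an entropy count of subtrees). The paper packages this as the covariance-decay Lemma~\ref{lem: Decorrelation of bad events} in terms of the graph distance $d(u,v)$ using two subtrees $H_u(r),H_v(r)$, whereas you truncate the events $B_{A,v}$ at radius $R(n)$; these are cosmetic reorganizations of the same argument.
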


Again, the proof idea is to use the Borel-Cantelli lemma in combination with Chebychev's inequality such that it remains to show the summability of certain covariances. In this case, one has to consider covariances of bad events, see \eqref{eq: Bad events}, around vertices on the thinned branch. They are defined as follows
 \begin{equation}\label{def: Site covariances}
        \text{Cov}(u,v):=\mu_A\Big(\big(\mathds{1}_{B_{A,u}}(\omega)-\mu_A(B_{A,\rho})\big)\cdot\big(\mathds{1}_{B_{A,v}}(\omega)-\mu_A(B_{A,\rho})\big)\Big)
    \end{equation}
for vertices $u,v\in V$. One is now able to show that these covariances decay exponentially fast in the minimal distance between the two vertices.

\begin{lemma}\label{lem: Decorrelation of bad events}
  For small enough $\beta$, there exists $C>0$ such that for any $c_1\in (0,\frac{1}{3})$ and $c_2\in (0,1)$, for any $u,v\in V$,
    \begin{equation*}
        |\text{Cov}(u,v)|\leq Ce^{-c d(u,v)}
    \end{equation*}
    where we recall that $d(u,v)$ is the graph distance on the tree and 
    \begin{equation*}
        e^{-c}:=\max\{e^{-c_1\lambda(\beta)},|\lambda_2(P_A)|^{\frac{c_2}{6}}\}
    \end{equation*}
    where $\lambda(\beta)$ is defined in Lemma \ref{lem: Key Lemma} and $\lambda_2(P_A)$ is the second largest eigenvalue (in modulus) of the transition matrix $P_A$ of $\mu_A$.
\end{lemma}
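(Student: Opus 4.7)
The plan is to decouple the two centered indicators by exploiting both the Peierls-type bound from Lemma \ref{lem: Key Lemma}~b) and the spectral gap of the homogeneous transition matrix $P_A$ of the tree-indexed Markov chain $\mu_A$. The idea is to localize each bad event to a ball, pay a Peierls cost for contours that reach outside that ball, and then decorrelate the two localized events via the tree-indexed Markov property combined with the geometric convergence $P_A^\ell \to \pi_A$.

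First, I fix a truncation scale $R := c_1\,d(u,v)$ with $c_1 \in (0, 1/3)$ and introduce the truncated bad event
$$B^R_{A,v} := \bigcup_{\bar\gamma:\, v \in \gamma,\, |\gamma| \leq R} B_A(\gamma),$$
which depends only on spins in the ball $B(v, R)$ since a connected contour of size at most $R$ through $v$ lies inside $B(v, R-1)$. Summing \eqref{eq: ineq mu_A key lemma} over all contours through $v$ of size $k > R$ and absorbing the tree entropy factor (of order $c_d^k$, the number of connected subtrees of $(V,E)$ containing $v$ with $k$ vertices) into a fraction of the linear-in-$\beta$ rate $\lambda(\beta)$ yields
$$\mu_A\bigl(B_{A,v} \setminus B^R_{A,v}\bigr) \;\leq\; C\,e^{-c_1 \lambda(\beta)\,d(u,v)},$$
valid for $\beta$ large enough so that $\lambda(\beta)$ dominates $\log c_d$; the symmetric estimate holds around $u$.

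For the localized contribution, pick a cut vertex $w$ on the unique path from $u$ to $v$ at distances $\ell_u, \ell_v$ of order $d(u,v)/2$ from $u$ and $v$. Since $2R+2 < d(u,v)$, the balls $B(u, R)$ and $B(v, R)$ sit in different connected components of $V \setminus \{w\}$, so the tree-indexed Markov property of $\mu_A$ yields
$$\text{Cov}\bigl(\mathds{1}_{B^R_{A,u}},\, \mathds{1}_{B^R_{A,v}}\bigr) \;=\; \text{Cov}_{\pi_A}\bigl(f_u,\, f_v\bigr), \qquad f_u(a) := \mu_A\bigl(B^R_{A,u} \,\big|\, \sigma_w = a\bigr),$$
with $f_v$ defined analogously. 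Pushing the conditioning from $w$ to the outer boundary of the ball around $u$ gives $f_u = P_A^{\ell_u - R - 1} h_u$ for a bounded function $h_u$ depending only on the restriction of $\omega$ to $B(u, R+1)$. The spectral gap of $P_A$ — its Perron eigenvalue $1$ is simple since $P_A$ is strictly positive by Proposition~\ref{prop: Bounds A-loc. states}, so $|\lambda_2(P_A)| < 1$ — gives $\|f_u - \pi_A(f_u)\|_\infty \leq C\,|\lambda_2(P_A)|^{\ell_u - R - 1}$ and the analogue for $f_v$, so that
$$\bigl|\text{Cov}_{\pi_A}(f_u, f_v)\bigr| \;\leq\; C\,|\lambda_2(P_A)|^{(1/2 - c_1)\,d(u,v) - O(1)}.$$
Because $c_1 < 1/3$, the exponent is strictly larger than $d(u,v)/6$ and therefore comfortably accommodates a factor $c_2/6$ for any $c_2 \in (0,1)$.

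To conclude, I decompose $\mathds{1}_{B_{A,v}} = \mathds{1}_{B^R_{A,v}} + \mathds{1}_{B_{A,v} \setminus B^R_{A,v}}$ (and similarly at $u$) and expand $\text{Cov}(u,v)$ into four terms; the three terms involving at least one long-contour indicator are absorbed into the tail estimate via the trivial bound $\mathds{1} \leq 1$, while the remaining short-contour term is controlled by the Markov-chain estimate. Taking the maximum of the two rates produces exactly the stated bound. The main obstacle is the entropic balance in the truncation step: one needs $\lambda(\beta)$ from Lemma \ref{lem: Key Lemma} to outgrow the exponential rate $\log c_d$ of the contour counts through a fixed vertex, which is the genuine low-temperature requirement and is guaranteed by the explicit form $\lambda(\beta) = \tilde c + c\delta_0\beta$.
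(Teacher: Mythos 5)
Your proposal follows essentially the same route the paper sketches for this lemma (which it attributes to the method in \cite{CoKuLe24}): split $B_{A,u}$ and $B_{A,v}$ into contributions from contours confined to disjoint neighborhoods of $u$ and $v$ versus contours escaping those neighborhoods, bound the escaping part by summing the Peierls estimate \eqref{eq: ineq mu_A key lemma} over long contours, and decorrelate the localized parts through the tree-indexed Markov property and the geometric convergence of the primitive matrix $P_A$ to $\pi_A$ (Perron--Frobenius). The only cosmetic differences are that the paper phrases the localization in terms of subtrees $H_u(r),H_v(r)$ rather than balls and a single cut vertex $w$, and the paper's ``small enough $\beta$'' should read ``large enough $\beta$'' exactly as you wrote.
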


The proof of this lemma follows an analogous idea given in \cite{CoKuLe24}, but we recall the main ideas for the sake of completeness. Consider two distinct subtrees $H_u(r)$ and $H_v(r)$ containing the vertices $u$ and $v$ where $r$ determines the distance between these subtrees. These subtrees can be chosen in such a way that the minimal path between these subtrees but also each minimal path from $u$ (respectively $v$) to the outside of $H_u(r)$ (respectively $H_v(r)$) contain a large enough amount of vertices. Then, one can investigate correlations of two bad events produced by contours where both laying completely inside these subtrees or contours where at least one has non-empty intersection with vertices outside and possibly connect these two subtrees. In the first case, one can use a consequence of the Perron-Frobenius theorem, see Example 4.3.9 in \cite{Bre20}, to argue that the transition matrix $P_A$ converges exponentially fast in the minimal distance between $H_u(r)$ and $H_v(r)$ to the stationary distribution. Note that we require the irreducibility and aperiodicty of $P_A$ which is however given because the state space is finite and $P_A$ is strictly positive, see Proposition \ref{prop: Bounds A-loc. states}. In the second case exists at least one contour with a large amount of vertices due to the fact that one of the contours connects to the outside of the subtree. Here, one can use the exponential smallness of this bad event under $\mu_A$ shown in Lemma \ref{lem: Key Lemma}. 

Choosing the thinning of the branch properly and using the result of Lemma \ref{lem: Decorrelation of bad events} leads then to the statement of Lemma \ref{lem: A.s. convergence emp. mean of bad vertices}. Finally, applying Lemma \ref{lem: A.s. convergence emp. mean of bad vertices} and part b) of Lemma \ref{lem: Key Lemma} in \eqref{eq: Bounds for thinned branch overlap 1} and we obtain 
\begin{equation*}
    \underline{\phi}^\omega \geq 1-\epsilon_1(\beta)-\mu_A(B_{A,\rho})\geq 1- \epsilon_1(\beta)-\epsilon_2(\beta)
\end{equation*}
which is the desired statement of the multi-site reconstruction in Theorem \ref{thm: Branch overlap}.

\subsubsection{Almost-sure singularity of $\mu_A$-typical extremals}

Let us discuss how Theorem \ref{thm: Branch overlap} can be used to show the following statements about the extremal decomposition of $A$-localized states $\mu_A$.

\begin{theorem}[Almost-sure singularity of extremals]\label{thm: A.s. singularity extremals}
Consider a $\Z_q$-valued nearest-neighbor clock model with Hamiltonian given in \eqref{def: Hamiltonian} and a potential defined in \eqref{eq: Potential ferr n.n. model} fulfilling the $u,U,d$-bounds in \eqref{eq: u,U-bounds}. Furthermore, let $\beta$ be large enough, $A\subsetneq \Z_q$ such that $|A|\geq 2$ and denote with $\mu_A$ its corresponding $A$-localized state, see Proposition \ref{prop: Bounds A-loc. states}. Then, for $\mu_A \otimes \mu_A$-a.e. pair $(\omega,\omega')$ the extremal Gibbs measures $\pi(\cdot|\omega)$ and $\pi(\cdot|\omega')$ are singular with respect to each other, i.e.
    \begin{equation*}
        \mu_A \otimes \mu_A\bigg(\big\{(\omega,\omega')\in \Omega \times \Omega : \pi(\cdot|\omega) \perp \pi(\cdot|\omega')\big\}\bigg)=1.
    \end{equation*}
\end{theorem}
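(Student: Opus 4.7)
The plan is to exhibit a single tail-measurable event which $\pi(\cdot|\omega)$ assigns probability one and $\pi(\cdot|\omega')$ assigns probability zero for $\mu_A\otimes\mu_A$-a.e.\ pair $(\omega,\omega')$. Since $\pi(\cdot|\omega)$ and $\pi(\cdot|\omega')$ are extremal Gibbs measures for $\mu_A$-a.e.\ $\omega,\omega'$ and are hence trivial on the tail-$\sigma$-algebra, any such event forces $\pi(\cdot|\omega)\perp\pi(\cdot|\omega')$. Fix spacings $r$ so sparse that $\sum_n|\Lambda_n^r|^{-1}<\infty$ and Theorem \ref{thm: Branch overlap} applies; I will work with the candidate event
\begin{equation*}
E_\omega := \left\{\sigma : \underline{\phi}^\omega(\sigma) \geq 1 - \epsilon_1(\beta) - \epsilon_2(\beta)\right\}.
\end{equation*}
Theorem \ref{thm: Branch overlap} applied with $\omega$ as reference already yields $\pi(E_\omega|\omega)=1$ for $\mu_A$-a.e.\ $\omega$, so the real content of the proof lies in showing $\pi(E_\omega|\omega')=0$ for $\mu_A\otimes\mu_A$-a.e.\ pair.

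\textbf{Reduction to an independent-draw matching frequency.} To upper-bound $\underline{\phi}^\omega$ under $\pi(\cdot|\omega')$, I will apply Theorem \ref{thm: Branch overlap} a second time, now with $\omega'$ as reference: $\pi(\cdot|\omega')$-a.s., the density of sites $v\in\Lambda_n^r$ with $\sigma_v\neq\omega'_v$ is at most $\epsilon_1(\beta)+\epsilon_2(\beta)$ in the limit. Splitting the overlap sum $\sum_{v\in\Lambda_n^r}\mathds{1}_{\sigma_v=\omega_v}$ into sites where $\sigma_v=\omega'_v$ holds (on which $\mathds{1}_{\sigma_v=\omega_v}=\mathds{1}_{\omega'_v=\omega_v}$) and the complementary set (bounded trivially by one) gives
\begin{equation*}
\underline{\phi}^\omega(\sigma) \;\leq\; \limsup_{n\to\infty}\frac{1}{|\Lambda_n^r|}\sum_{v\in\Lambda_n^r}\mathds{1}_{\omega_v=\omega'_v} \;+\; \epsilon_1(\beta)+\epsilon_2(\beta)
\end{equation*}
$\pi(\cdot|\omega')$-a.s., so the problem reduces to controlling, under $\mu_A\otimes\mu_A$, the matching frequency between two independent $\mu_A$-typical configurations along the thinned branch.

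\textbf{Ergodic theorem under the product measure.} I will next establish that, for $\mu_A\otimes\mu_A$-a.e.\ $(\omega,\omega')$,
\begin{equation*}
\lim_{n\to\infty}\frac{1}{|\Lambda_n^r|}\sum_{v\in\Lambda_n^r}\mathds{1}_{\omega_v=\omega'_v} \;=\; \sum_{a\in\Z_q}\pi_A(a)^2,
\end{equation*}
by the same Chebyshev plus Borel--Cantelli scheme as in Lemma \ref{lem: A.s. convergence emp. mean of bad vertices}. Along any branch of the tree, the $\Z_q\times\Z_q$-valued process $(\omega_{v_i},\omega'_{v_i})_i$ under $\mu_A\otimes\mu_A$ is a Markov chain with transitions $P_A^{r_i}\otimes P_A^{r_i}$ and stationary law $\pi_A\otimes\pi_A$. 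Since $P_A$ is strictly positive on the finite set $\Z_q$ (Proposition \ref{prop: Bounds A-loc. states}), the product chain is irreducible and aperiodic, and the Perron--Frobenius argument already used in Lemma \ref{lem: Decorrelation of bad events} transfers verbatim to give exponential decay of two-site covariances of the bounded matching indicator in $d(v_i,v_j)$. Choosing the $r_i$ to grow fast enough makes these covariances summable along $(\Lambda_n^r)_n$, and the Chebyshev--Borel--Cantelli argument then closes.

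\textbf{Conclusion and main obstacle.} Combining the previous two steps with bound (iv) of Proposition \ref{prop: Bounds A-loc. states} together with $\sum_{a\in A^c}\pi_A(a)^2\leq C_1 e^{-(d+1)\beta u}$ yields
\begin{equation*}
\sum_{a\in\Z_q}\pi_A(a)^2 \;\leq\; \frac{(1-C_2 e^{-\beta u})^{-2}}{|A|}+C_1 e^{-(d+1)\beta u} \;\longrightarrow\; \frac{1}{|A|}\;\leq\;\frac{1}{2}\qquad(\beta\to\infty).
\end{equation*}
Since $\epsilon_1(\beta),\epsilon_2(\beta)\downarrow 0$ as $\beta\uparrow\infty$, for sufficiently large $\beta$ the upper bound on $\underline{\phi}^\omega(\sigma)$ is strictly less than $1-\epsilon_1(\beta)-\epsilon_2(\beta)$, so $\pi(E_\omega|\omega')=0$ for $\mu_A\otimes\mu_A$-a.e.\ pair, whence the singularity. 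The only step that requires genuine care rather than a direct transcription from \cite{CoKuLe24} is the decorrelation estimate for the product chain: the spacings $r$ must be chosen simultaneously sparse enough to produce the summability of the product-chain covariances here, the covariances of bad events in Lemma \ref{lem: Decorrelation of bad events}, and the covariances in Lemma \ref{lem: Concentration of spin overlaps under tp extr.} needed for both applications of Theorem \ref{thm: Branch overlap}. The spectral gap of $P_A$ on the finite state space $\Z_q$ makes this simultaneous choice possible, but the quantitative bookkeeping is the one point where the proof has to be threaded carefully.
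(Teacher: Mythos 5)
Your proof is correct and follows essentially the same route as the paper: two applications of the multi-site reconstruction theorem (once with $\omega$, once with $\omega'$ as reference), the ergodic-type convergence of the matching frequency $\frac{1}{|\Lambda_n^r|}\sum_v\mathds{1}_{\omega_v=\omega'_v}\to\sum_a\pi_A(a)^2$ under $\mu_A\otimes\mu_A$ via Chebyshev--Borel--Cantelli and exponential decorrelation of the product chain, and the fact that $\sum_a\pi_A(a)^2\to1/|A|\leq1/2$ while $\epsilon_1,\epsilon_2\to0$. The only difference is packaging: you directly exhibit the separating tail event $E_\omega$, whereas the paper first establishes the strict inequality $\pi(\underline{\phi}^\omega|\omega)>\pi(\underline{\phi}^\omega|\omega')$ (Corollary \ref{cor: distinct extremals}), infers $\pi(\cdot|\omega)\neq\pi(\cdot|\omega')$, and then invokes the abstract fact that distinct extremal Gibbs measures are mutually singular.
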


As a direct consequence, we can deduce the following statement and hence $ii)$ of Theorem \ref{thm: Main results}.

\begin{corollary}
    The decomposition measure $\alpha_{\mu_A}$, introduced in Theorem \ref{thm: decomposition of Gibbs measures}, of the $A$-localized state $\mu_A$, given in Proposition \ref{prop: Bounds A-loc. states}, has no atoms, i.e. $\alpha_{\mu_A}(\{\nu\})=0$ for all $\nu\in \text{ex } \mathscr{G}(\gamma)$. In particular, there are uncountably many extremal states which enter in the extremal decomposition of $\mu_A$.
\end{corollary}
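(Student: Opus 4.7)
The plan is to deduce both the no-atoms statement and the uncountability statement from Theorem \ref{thm: A.s. singularity extremals} by a routine measure-theoretic argument, exploiting that $\alpha_{\mu_A}$ is, by construction (see Appendix A), the law of the random extremal $\pi(\cdot|\omega)$ when $\omega$ is sampled from $\mu_A$, i.e. the pushforward of $\mu_A$ under the measurable map $\omega \mapsto \pi(\cdot|\omega)$.

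I would first establish atomlessness by contradiction. Suppose there exists some $\nu\in \text{ex}\,\mathscr{G}(\gamma)$ with $\alpha_{\mu_A}(\{\nu\})=c>0$. The fibre $E_\nu:=\{\omega\in\Omega:\pi(\cdot|\omega)=\nu\}$ is then measurable with $\mu_A(E_\nu)=c$, so the product set $E_\nu\times E_\nu$ has $\mu_A\otimes\mu_A$-mass $c^2>0$. For every pair $(\omega,\omega')\in E_\nu\times E_\nu$ one has $\pi(\cdot|\omega)=\nu=\pi(\cdot|\omega')$, and since a probability measure is never mutually singular with respect to itself, all such pairs violate the singularity conclusion of Theorem \ref{thm: A.s. singularity extremals}. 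This contradicts the fact that singularity holds on a set of full $\mu_A\otimes\mu_A$-measure. Hence $\alpha_{\mu_A}(\{\nu\})=0$ for every extremal $\nu$.

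For the uncountability claim, let $S$ be the support of $\alpha_{\mu_A}$; since $\pi(\cdot|\omega)$ is extremal for $\mu_A$-a.e.\ $\omega$, the inclusion $S\subset \text{ex}\,\mathscr{G}(\gamma)$ holds up to a $\mu_A$-null correction. If $S$ were countable, then by $\sigma$-additivity combined with the atomless property just established one would obtain $\alpha_{\mu_A}(S)=\sum_{\nu\in S}\alpha_{\mu_A}(\{\nu\})=0$, contradicting that $\alpha_{\mu_A}$ is a probability measure with total mass $1$. Consequently $S$ is uncountable and the extremal decomposition of $\mu_A$ is supported on uncountably many distinct extremal states.

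The substantive work lies entirely upstream of this corollary: the genuinely hard step is Theorem \ref{thm: A.s. singularity extremals}, obtained from the multi-site reconstruction of Theorem \ref{thm: Branch overlap} and, further back, from the novel $A$-irregularity control of Lemma \ref{lem: Key Lemma}. At the present stage the only thing to guard against is measurability of the fibres $E_\nu$, which follows from the measurable dependence $\omega\mapsto \pi(\cdot|\omega)$ recalled in Appendix A; beyond that, there is no real obstacle.
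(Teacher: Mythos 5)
Your proof is correct and follows the same route as the paper (which defers to the argument of \cite{CoKuLe24}): atomlessness is deduced from Theorem~\ref{thm: A.s. singularity extremals} via a Fubini/contradiction argument on the fibre $E_\nu$ of $\omega\mapsto\pi(\cdot|\omega)$, using that a probability measure is not singular with respect to itself, and uncountability then follows by $\sigma$-additivity. The measurability of $E_\nu$ and the identification $\alpha_{\mu_A}=\mu_A\circ(\omega\mapsto\pi(\cdot|\omega))^{-1}$ that you invoke are exactly the facts recorded in Appendix~A (Theorem~\ref{thm: decomposition of Gibbs measures} and formula~\eqref{eq: extremal decomposition measure}).
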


The proof of this corollary follows as in \cite{CoKuLe24} and the main idea is the fact that distinct extremals are singular, see Theorem 7.7 in \cite{Ge11}. In order to prove Theorem \ref{thm: A.s. singularity extremals}, one can use the multi-site reconstruction (Theorem \ref{thm: Branch overlap}) and deduce the following statement.

\begin{corollary}\label{cor: distinct extremals}
    Let $\beta$ be large enough such that 
    \begin{equation*}
        \epsilon_1(\beta)+\epsilon_2(\beta)<\frac{1}{2}\left(1-\sum_{a\in \Z_q}\mu_A(\sigma_\rho=a)^2\right).
    \end{equation*}
    Then, there is a thinned branch $(\Lambda_n^r)_{n\in \N}$ with increasing sequence $r=(r_i)_{i\in \N}$ such that for the correponding tail-measurable observable $\underline{\phi}^\omega$ we have the strict inequality
    \begin{equation*}
        \pi(\underline{\phi}^\omega|\omega)>\pi(\underline{\phi}^\omega|\omega')
    \end{equation*}
    for $\mu_A \otimes \mu_A$-a.e. $(\omega,\omega')$.
\end{corollary}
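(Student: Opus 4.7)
The plan is to play Theorem~\ref{thm: Branch overlap} off against itself. Applied at $\omega$ it directly gives $\pi(\underline{\phi}^\omega|\omega)\geq 1-\epsilon_1(\beta)-\epsilon_2(\beta)$ for $\mu_A$-a.e.\ $\omega$, so the task reduces to producing a matching upper bound for $\pi(\underline{\phi}^\omega|\omega')$ that is close to the collision density $\sum_{a\in\Z_q}\mu_A(\sigma_\rho=a)^2$.

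The starting point for the upper bound is the deterministic pointwise inequality
\begin{equation*}
\mathds{1}_{\sigma_v=\omega_v}\leq \mathds{1}_{\omega'_v=\omega_v}+\mathds{1}_{\sigma_v\neq\omega'_v},
\end{equation*}
valid at every vertex $v$ (if $\sigma_v=\omega_v$ and $\omega'_v\neq\omega_v$, then necessarily $\sigma_v\neq\omega'_v$). Averaging over $\Lambda_n^r$, taking $\liminf_n$, and integrating against $\pi(\cdot|\omega')$, the second summand is controlled by $\epsilon_1(\beta)+\epsilon_2(\beta)$ through a \emph{second} application of Theorem~\ref{thm: Branch overlap}, now with boundary condition $\omega'$. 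Hence, for $\mu_A\otimes\mu_A$-a.e.\ $(\omega,\omega')$,
\begin{equation*}
\pi(\underline{\phi}^\omega|\omega')\leq Q_\infty(\omega,\omega')+\epsilon_1(\beta)+\epsilon_2(\beta),\qquad Q_n(\omega,\omega'):=\frac{1}{|\Lambda_n^r|}\sum_{v\in\Lambda_n^r}\mathds{1}_{\omega_v=\omega'_v},
\end{equation*}
where $Q_\infty$ denotes the $(\mu_A\otimes\mu_A)$-a.s.\ limit of $Q_n$ that the next step delivers.

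The core step is a product-space law of large numbers
\begin{equation*}
\lim_{n\to\infty}Q_n(\omega,\omega')=\sum_{a\in\Z_q}\mu_A(\sigma_\rho=a)^2\qquad (\mu_A\otimes\mu_A)\text{-a.s.}
\end{equation*}
for a sufficiently sparse thinning. The mean is immediate by spatial homogeneity of $\mu_A$ and independence of $\omega,\omega'$. For the variance, the pair-covariances $\mathrm{Cov}_{\mu_A\otimes\mu_A}(\mathds{1}_{\omega_u=\omega'_u},\mathds{1}_{\omega_v=\omega'_v})$ factorize across the two marginals into sums of products of single-site $\mu_A$-covariances, each of which decays exponentially in $d(u,v)$ at rate $|\lambda_2(P_A)|$ via Perron--Frobenius applied to the strictly positive transition matrix $P_A$ from Proposition~\ref{prop: Bounds A-loc. states}---exactly the mechanism used in the first case of Lemma~\ref{lem: Decorrelation of bad events}. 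A Chebyshev plus Borel--Cantelli argument entirely parallel to Lemma~\ref{lem: Concentration of spin overlaps under tp extr.} and Lemma~\ref{lem: A.s. convergence emp. mean of bad vertices} then yields the a.s.\ convergence, and a single spacing sequence $(r_i)$ growing fast enough simultaneously supports this LLN together with the two invocations of Theorem~\ref{thm: Branch overlap}.

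Subtracting the two bounds gives
\begin{equation*}
\pi(\underline{\phi}^\omega|\omega)-\pi(\underline{\phi}^\omega|\omega')\geq 1-\sum_{a\in\Z_q}\mu_A(\sigma_\rho=a)^2-2\bigl(\epsilon_1(\beta)+\epsilon_2(\beta)\bigr),
\end{equation*}
strictly positive exactly under the standing hypothesis, which finishes the proof. The only real issue is arranging a single thinning $(r_i)$ to support all three ergodic statements; but since each merely asks for summability of an exponentially decaying covariance series, any sufficiently fast-growing spacing works.
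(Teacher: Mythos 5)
Your proposal matches the paper's proof essentially step for step: the same lower bound on $\pi(\underline{\phi}^\omega|\omega)$ from Theorem \ref{thm: Branch overlap}, the same pointwise decomposition $\mathds{1}_{\sigma_v=\omega_v}\leq\mathds{1}_{\omega'_v=\omega_v}+\mathds{1}_{\sigma_v\neq\omega'_v}$ with a second invocation of Theorem \ref{thm: Branch overlap} at $\omega'$, the same product-space law of large numbers for the collision density via Chebyshev and Borel--Cantelli with exponentially decaying covariances from $P_A$'s spectral gap, and the same final subtraction. No genuine differences to report.
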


Note that this implies $\pi(\cdot|\omega)\neq \pi(\cdot|\omega')$ for $\mu_A \otimes \mu_A$-a.e. $(\omega,\omega')$. However, the measures satisfy $\pi(\cdot|\omega),\pi(\cdot|\omega')\in \text{ex}\mathscr{G}(\gamma)$ and thus they are singular. This implies the statement of Theorem \ref{thm: A.s. singularity extremals}. 

Let us give a sketch of the proof for Corollary \ref{cor: distinct extremals}. Note that 
\begin{equation}\label{eq: lower bound mismatches pi kernel}
        \pi(\underline{\phi}^\omega|\omega)\geq 1-\epsilon_1(\beta)-\epsilon_2(\beta)
    \end{equation}
follows directly from Theorem \ref{thm: Branch overlap}. On the other hand, one can show easily that $\mathds{1}_{\sigma_v=\omega_v}\leq \mathds{1}_{\omega_v=\omega_v'}+\mathds{1}_{\sigma_v\neq \omega_v'}$ and hence 
\begin{equation}\label{eq: upper bound pi branch overlaps}
        \pi(\underline{\phi}^\omega|\omega')\leq \limsup_{n\uparrow \infty}\frac{1}{|\Lambda^r_n|}\sum_{v\in \Lambda^r_n}\mathds{1}_{\omega_v=\omega'_{v}}+\pi\left(\limsup_{n\uparrow \infty}\frac{1}{|\Lambda^r_n|}\sum_{v\in \Lambda^r_n}\mathds{1}_{\sigma_v\neq \omega'_v}\Bigg|~ \omega'\right).
        \end{equation}
        The right term is again bounded by Theorem \ref{thm: Branch overlap} as follows
        \begin{equation*}
            \pi(1-\underline{\phi}^{\omega'}|\omega')\leq \epsilon_1(\beta)+\epsilon_2(\beta).
        \end{equation*}
        For the left term in \eqref{eq: upper bound pi branch overlaps}, we are able to prove that 
        \begin{equation}\label{eq: Convergence matches}
    \limsup_{n\uparrow \infty}\frac{1}{|\Lambda^r_n|}\sum_{v\in \Lambda^r_n}\mathds{1}_{\omega_v=\omega'_{v}}=\sum_{a\in \Z_q} \mu_A(\sigma_0=a)^2
\end{equation}
for $\mu_A \otimes \mu_A$-almost every pair $(\omega,\omega')$. The proof of this equality is again based on the application of the Borel-Cantelli lemma in combination with Chebyshev's inequality. It remains to prove the summability of covariances between the random variables $\mathds{1}_{\omega_u=\omega'_u}$ and $\mathds{1}_{\omega_v=\omega'_v}$ under the measure $\mu_A \otimes \mu_A$ for vertices $u,v\in \Lambda_n^r$. However, for different vertices and sparse enough branches, the covariance is exponentially small due to the convergence of $P_A$ to the stationary distribution. Consequently, we obtain the desired results of Corollary \ref{cor: distinct extremals}.

\section*{Open problems}\addcontentsline{toc}{section}{Open problems}

One remaining challenging problem is the investigation of the extremal decomposition of $A$-localized states in models with countably infinite state spaces, e.g. the $p$-SOS model. For these models, it is not possible to directly apply our coarse-graining method. The problem is that one 
can not be sure that $A$ is reached from any $a\in A^c$ in only one jump, with sufficiently high probability \text{uniformly} in $a$. The relevant governing probabilty for this is
the Markovian transition matrix $P_A$ of $\mu_A$. For all models except Potts it is only implicitly given, 
and the bounds might not be good enough, but moreover the desired statements might simply not hold. 
It could be possible, that one can modify the method such that it is sufficient to have a high probability of jumping back into $A$ only 
in a few jumps and thereby handle this problem.  But this will be a work for the future.

Another interesting and challenging question is to analyze the intermediate temperature regimes. The investigation of the Potts model on the Cayley tree in \cite{KuRo17} already showed that there are states with finite localization set possessing two different phase transitions. Similarly to the case of the free state in the Ising model on the regular tree, these states are provably not extreme in a low temperature regime (however with no information about the extremal decomposition measure given in \cite{KuRo17}), and provably extreme in an intermediate regime which 
is below the high temperature region where the model possesses a unique Gibbs state. 
If this intermediate regime always exists for general clock models is, to the best of our knowledge, still unknown.

\section*{Appendix A: The structure of $\mathscr{G}(\gamma)$ and background on the extremal decomposition}\label{Sec: Appendix A}
\addcontentsline{toc}{section}{Appendix A}

In statistical mechanics, we are interested in the structure of the set $\mathscr{G}(\gamma)$ for a given specification $\gamma$. First of all, note that $\Omega=(\Z_q)^{\otimes V}$ is compact with respect to the product topology and hence $\mathscr{M}_1(\Omega)$ is weakly compact. This implies that $\mathscr{G}(\gamma^\Phi)\neq \emptyset$ for the finite-range interaction potentials $\Phi$ which we consider in this work. \newline

\noindent\textbf{Extremal Gibbs measures.} Moreover, one can conclude from \eqref{eq: DLR-equation} that the DLR-equations are linear and the set $\mathscr{G}(\gamma)$ is convex. Hence, we are interested in the extreme points of $\mathscr{G}(\gamma)$. These are points of particular interest because they are considered to be the physical states of the system, see \cite{FV17}, \cite{Ge11} and \cite{Le08}. Let us give an intuition behind this statement by considering the tail $\sigma$-algebra of asymptotic events
$ \mathscr{F}_\infty :=\bigcap_{\Lambda \Subset V} \mathscr{F}_{\Lambda^c}$.

In a real physical system in equilibrium, one can observe that quantities on a microscopic scale will fluctuate rapidly but quantities on a macroscopic scale are not affected by these fluctuations and remain constant. Here, the macroscopic quantities can be seen as tail measurable observables because they are not altered by finitely many spin flips. Why does an extremal Gibbs measure $\mu \in \text{ex } \mathscr{G}(\gamma)$ describes such a system in equilibrium? One can show that there is an equivalent characterization for the extremal Gibbs measures in the sense that they are trivial on $\mathscr{F}_\infty$, see Theorem 7.7 in \cite{Ge11}. Hence, the extreme elements of $\mathscr{G}(\gamma)$ describe a possible deterministic outcome for the macroscopic quantities of the system and hence a possible equilibrium state of the system. 

Let $\mu \in \mathscr{G}(\gamma)$, then we are able to obtain an extremal Gibbs measure by considering the $\pi$-kernel, see \eqref{eq: Limit pi-kernel}. In more detail, we have 
\begin{equation*}
    \mu\Big(\omega \in \Omega:~\pi(\cdot|\omega)\in \text{ex } \mathscr{G}(\gamma)\Big)=1.
\end{equation*}
For a detailed proof of this statement, we would recommend to consult the books \cite{FV17} and \cite{Ge11}.\newline

\noindent \textbf{Extremal decomposition of Gibbs measures.} One can say even more about the geometry of $\mathscr{G}(\gamma)$, namely that this set is a simplex and thus every $\mu\in \mathscr{G}(\gamma)$ can be uniquely decomposed into a convex combination of elements in $\text{ex}~\mathscr{G}(\gamma)$. A measure $\mu \in \mathscr{G}(\gamma)$ decomposes into the extremal Gibbs measures $\pi(\cdot |\omega)$ with $\mu$-typical boundary conditions $\omega\in \Omega$ at infinity. We want to discuss this extremal decomposition in the following in more detail.

Our goal is to consider probability measures on subsets $\mathscr{M} \subset\mathscr{M}_1(\Omega)$ and thus we need a measurable structure on these sets. This can be done by considering the \textit{evaluation $\sigma$-algebra} $e(\mathscr{M})$ which is defined as follows: Define \textit{evaluation maps} $e_A:\mathscr{M}\rightarrow [0,1]$ for each $A\in \mathscr{F}$ as $e_A(\mu):=\mu(A)$. Then, define 
\begin{equation*}
    e(\mathscr{M}):=\sigma\Big(\{e_A\leq c\},~A\in \mathscr{F},~0\leq c\leq 1\Big)
\end{equation*}
which is the smallest $\sigma$-algebra on $\mathscr{M}$ such that $e_A$ is measurable for each $A\in \mathscr{F}$. Finally, there is the following theorem which can be found as Theorem 7.26 in \cite{Ge11} or as Theorem 6.72 in \cite{FV17}.

\begin{theorem}\label{thm: decomposition of Gibbs measures}
    Assume that $\mathscr{G}(\gamma)\neq \emptyset$. Then $\mathscr{G}(\gamma)$ is a convex subset of $\mathscr{M}_1(\Omega)$ and it satisfies the following properties
    \begin{equation}\label{eq: Extremal decomposition GM}
        \mu=\int_{\text{ex }\mathscr{G}(\gamma)} \nu \cdot \alpha_\mu(d\nu)
    \end{equation}
    for all $\mu \in \mathscr{G}(\gamma)$. Here, $\alpha_\mu \in \mathscr{M}_1\Big(\text{ex }\mathscr{G}(\gamma),e(\text{ex }\mathscr{G}(\gamma))\Big)$ is defined for all $M\in e(\text{ex }\mathscr{G}(\gamma))$ by 
    \begin{equation}\label{eq: extremal decomposition measure}
        \alpha_\mu(M)=\mu\Big(\{\omega\in \Omega:~\pi(\cdot|\omega)\in M\}\Big).
    \end{equation}
\end{theorem}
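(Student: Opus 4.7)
The plan is to realize $\pi(\cdot|\omega)$ as a version of the conditional expectation of $\mu$ given the tail $\sigma$-algebra $\mathscr{F}_\infty$, and then to obtain $\alpha_\mu$ as the pushforward of $\mu$ along $\omega \mapsto \pi(\cdot|\omega)$. Convexity of $\mathscr{G}(\gamma)$ is immediate from the linearity of the DLR relation \eqref{eq: DLR-equation} in the measure argument, so the real content is the integral representation \eqref{eq: Extremal decomposition GM}.

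First I would establish existence of the limit \eqref{eq: Limit pi-kernel} for $\mu$-almost every $\omega$. Along an exhausting sequence $\Lambda_n\uparrow V$, DLR gives $\gamma_{\Lambda_n}(A|\cdot)=\E_\mu[\mathds{1}_A|\mathscr{F}_{\Lambda_n^c}]$ $\mu$-a.s., and since $(\mathscr{F}_{\Lambda_n^c})_n$ is a decreasing family with intersection $\mathscr{F}_\infty$, the backward martingale convergence theorem yields pointwise and $L^1$-convergence to $\E_\mu[\mathds{1}_A|\mathscr{F}_\infty]$. Running through a countable algebra of cylinder events one obtains a simultaneous $\mu$-a.s. limit, which extends uniquely to a probability kernel $\pi(\cdot|\omega)$ satisfying $\pi(A|\omega)=\E_\mu[\mathds{1}_A|\mathscr{F}_\infty](\omega)$ $\mu$-a.s.\ for each $A\in \mathscr{F}$.

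Next I would check that $\pi(\cdot|\omega)\in \text{ex}\,\mathscr{G}(\gamma)$ for $\mu$-a.e.\ $\omega$. The consistency relation $\gamma_\Delta\gamma_\Lambda=\gamma_\Delta$ passes to the limit in the first argument, so $\pi(\cdot|\omega)$ satisfies the DLR equations and is therefore Gibbs. Extremality follows from the tail-triviality characterization of extremal Gibbs measures (Theorem~7.7 of \cite{Ge11}): since $\omega'\mapsto \pi(A|\omega')$ is $\mathscr{F}_\infty$-measurable by construction, a tower-type computation gives $\pi(\pi(A|\cdot)|\omega)=\pi(A|\omega)$, from which $\pi(A|\cdot)$ is $\pi(\cdot|\omega)$-a.s.\ constant, i.e.\ the tail $\sigma$-algebra is trivial under $\pi(\cdot|\omega)$.

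To conclude, integrating the identity $\pi(A|\omega)=\E_\mu[\mathds{1}_A|\mathscr{F}_\infty](\omega)$ against $\mu$ yields $\mu(A)=\int \pi(A|\omega)\,\mu(d\omega)$. Setting $T(\omega):=\pi(\cdot|\omega)$ and $\alpha_\mu:=\mu\circ T^{-1}$, a change of variables rewrites this as \eqref{eq: Extremal decomposition GM}, with $\alpha_\mu$ of the form \eqref{eq: extremal decomposition measure}. The main technical obstacle is the measurability of $T$ from $(\Omega,\mathscr{F})$ to $\bigl(\text{ex}\,\mathscr{G}(\gamma),\,e(\text{ex}\,\mathscr{G}(\gamma))\bigr)$: by definition of the evaluation $\sigma$-algebra this reduces to showing that $\omega\mapsto \pi(A|\omega)$ is $\mathscr{F}$-measurable for every $A\in \mathscr{F}$, which follows from the martingale construction for cylinders and then for all of $\mathscr{F}$ via a monotone class argument. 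This step is the only point where the detailed measurable structure on spaces of measures actually intervenes; the simplex uniqueness of $\alpha_\mu$ (beyond mere existence, and not strictly required for the stated formula) would need an additional Choquet-type input.
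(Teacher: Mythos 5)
The paper does not prove this theorem itself; it simply cites Theorem~7.26 of Georgii \cite{Ge11} and Theorem~6.72 of Friedli--Velenik \cite{FV17}, so the relevant comparison is against the standard proof in those references. Your plan is essentially that standard proof: backward martingale convergence along $\mathscr{F}_{\Lambda_n^c}\downarrow\mathscr{F}_\infty$ to construct $\pi(\cdot|\omega)$, the cofinal consistency relation $\gamma_{\Lambda_n}\gamma_\Lambda=\gamma_{\Lambda_n}$ to show the limit is Gibbs, tail-triviality for extremality, and pushforward of $\mu$ under $\omega\mapsto\pi(\cdot|\omega)$ for the decomposition. The convexity and measurability remarks are correct and handled at the right level of detail.

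The one step that is not sound as written is the extremality argument. From the tower identity $\pi(\pi(A|\cdot)|\omega)=\pi(A|\omega)$ alone you cannot conclude that $\pi(A|\cdot)$ is $\pi(\cdot|\omega)$-a.s.\ constant: a bounded random variable with the right mean need not be degenerate. You also need the second-moment version — since $\pi(A|\cdot)^2$ is again $\mathscr{F}_\infty$-measurable, the same tower computation gives $\pi(\pi(A|\cdot)^2|\omega)=\pi(A|\omega)^2$, whence $\operatorname{Var}_{\pi(\cdot|\omega)}\bigl(\pi(A|\cdot)\bigr)=0$ and only then a.s.\ constancy. Even after patching this, concluding full tail-triviality of $\pi(\cdot|\omega)$ requires one more observation: constancy of $\pi(A|\cdot)$ simultaneously over a countable generating class of cylinders $A$ gives $\pi(\cdot|\omega')=\pi(\cdot|\omega)$ for $\pi(\cdot|\omega)$-a.e.\ $\omega'$, and one then invokes the characterization (Georgii Thm.~7.7) that a Gibbs measure $\nu$ is extremal iff $\pi_\nu(\cdot|\omega')=\nu$ for $\nu$-a.e.\ $\omega'$; the direct claim that $\mathscr{F}_\infty$ is $\pi(\cdot|\omega)$-trivial cannot be read off event-by-event because $\mathscr{F}_\infty$ is not itself presented via a countable generating class. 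These are exactly the points where the textbook proofs you are reconstructing do the real work, so you should fill them in rather than elide them.
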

It is interesting to note, that this result does not assume any regularity (like Markovianness or 
quasilocality) of the specification at hand. 

One could physically interpret the extremal decomposition in \eqref{eq: Extremal decomposition GM} as follows: A state $\mu \in \mathscr{G}(\gamma)$ represents a measurement of a system in equilibrium. This state decomposes into a convex combination of the real physical states $\text{ex }\mathscr{G}(\gamma)$ where the weights, given by $\alpha_\mu$, represent the uncertainty of this measurement.

\section*{Appendix B: Concentration and transition properties of $A$-localized clock states and proof of Proposition \ref{prop: Bounds A-loc. states}}\label{Sec: Appendix B} 
\addcontentsline{toc}{section}{Appendix B}

In Appendix B, we aim at a more detailed description of the quantities stated in Theorem \ref{thm: Existence A-localized states} which is needed to prove the bounds given in Proposition \ref{prop: Bounds A-loc. states}. The details of the next part can also be found in \cite{AbHeKuMa24}.

\subsection*{Details for $A$-localized states}

Define the number $\rho=\rho(d,n)$ as the unique positive solution of the equation
    \begin{equation*}
    (d-1)\rho^{d+1}+dn\rho^{d-1}-n=0.
\end{equation*}
The quantity $\eta=\eta(d,n)$ in Theorem \ref{thm: Existence A-localized states} depends on $\rho$ and is defined as
\begin{equation}\label{eq: def eta}
    \eta(d,n):=\frac{\rho-\rho^d}{(\rho^{d+1}+n)^{\frac{d}{d+1}}}.
\end{equation}
Furthermore, one is able to show that $\eta\in (\eta_0,1)$ and $\rho\in (0,d^{-\frac{1}{d-1}})$ where 
    \begin{equation}\label{eq: def eta_0}
        \eta_0:=\eta_0(d,n):=d^{-\frac{1}{d-1}}(1-d^{-1})(n+1)^{-\frac{d}{d+1}},
    \end{equation}
    see \cite{AbHeKuMa24} for a proof. The constants in Theorem \ref{thm: Existence A-localized states} depend on $n$ and $d$ and are more precisely given as
\begin{align*}
        c_1:=\frac{\rho^{d-1}}{\eta^{d-1}},~~~~c_2:=d(d^{\frac{1}{d-1}}&-1)(n+1)^{\frac{d}{d+1}},~~~~c_3:=n^{-1}d^{\frac{d+1}{d-1}}c_1^{\frac{d+1}{d-1}},\\
        c_4:=\frac{d+1}{d-1}&c_2~~~\text{and}~~~c_5:=n^{-1}c_1^{\frac{d+1}{d-1}}+c_4.
    \end{align*}
    The proof of Theorem \ref{thm: Existence A-localized states} relies on solving a recursive system of equations on the tree for the given transfer operator $Q$. In more detail, the goal is to find strictly positive solutions $u\in \ell^{\frac{d+1}{d}}(\Z_q)$ of the equation $u=c(Q * u)^d$ for some $c>0$ which are called \textit{boundary laws}. Note that $Q*u$ is the convolution of $Q$ and $u$ and it is defined as $(Q*u)(i)=\sum_{j\in \Z_q}Q(i-j)u(j)$ for all $i\in \Z_q$. Each of these solutions is then related to a Gibbs state for the corresponding model, see \cite{Ge11} or \cite{Za83} for more information about the boundary law formalism. The single-site marginal $\pi_A$ of $\mu_A$ can be expressed in terms of the corresponding strictly positive boundary law $u_A$ as follows $\pi_A(i)=u_A(i)^{\frac{d+1}{d}}(\|u_A^{\frac{d+1}{d}}\|_1)^{-1}$ for all $i\in \Z_q$ and hence $\pi_A$ is strictly positive, see Proposition 5.4 in \cite{AbHeKuMa24}. Moreover, the transition matrix $P_A$ can be written as
\begin{equation}\label{eq: Transition matrix}
    P_A(i,j)=\frac{\pi_A(j)^{\frac{d}{d+1}}Q(i-j)}{\big(Q * \pi_A^{\frac{d}{d+1}}\big)(i)}
\end{equation}
for all $i,j\in \Z_q$, whose proof is given in Remark 5.5 of \cite{AbHeKuMa24}.

\subsection*{Proof of Proposition \ref{prop: Bounds A-loc. states}}

Together with the details for the constants $c_1,...,c_5$, the quantities $\eta$ and $\rho$, and the relation \eqref{eq: Transition matrix}, given in the last section, we are able to prove Proposition \ref{prop: Bounds A-loc. states} which gives more explicit bounds in our situation of $u,U,d$-potentials.

\begin{proof}
    Note that \eqref{eq: Bound variable epsilon} follows directly after using the bound $e^{-\beta U}\leq Q(i)\leq e^{-\beta u}$ for each $i\in \Z_q\setminus \{0\}$, see \eqref{eq: u,U-bounds}. Let us continue by verifying the conditions for Theorem \ref{thm: Existence A-localized states}. First of all, we have $Q(0)=1$ since $\Bar{u}(0)=0$. Secondly, the quantity $\epsilon$ is upper bounded by $\eta$ because choosing $\beta$ large enough such that $(q-1)^{\frac{2}{d+1}}e^{-\beta  u}\leq \eta_0$ leads to the desired statement. Note that the left hand side is an upper bound for $\epsilon$, see \eqref{eq: Bound variable epsilon}, and $\eta_0$ is a lower bound for $\eta$, given in \eqref{eq: def eta_0}. Therefore, all conditions for Theorem \ref{thm: Existence A-localized states} are valid and we can use the results of this theorem to prove the remaining statements of Proposition \ref{prop: Bounds A-loc. states}. 
    
    First of all, note that the strict positivity of $P_A$ follows from the relation in \eqref{eq: Transition matrix}. The transfer operator $Q$ is strictly positive due to the definition of the potential \eqref{eq: Potential ferr n.n. model} together with the $u,U,d$-bounds in \eqref{eq: u,U-bounds}. Moreover, the single-site marginal $\pi_A$ is strictly positive due to Theorem \ref{thm: Existence A-localized states} and thus is $P_A$ strictly positive.
    
    Secondly, let us use $(i)-(iv)$ of Theorem \ref{thm: Existence A-localized states} to prove the bounds $(i)-(iv)$ stated in Proposition \ref{prop: Bounds A-loc. states}. Using the lower bound $\eta \geq \eta_0$ and the upper bound $\rho<d^{-\frac{1}{d-1}}$ in the definition of $c_1$ leads to
    \begin{equation}\label{eq: upper bound c1}
        c_1\leq\left(\frac{d}{d-1}\right)^{d-1}(n+1)^{\frac{d(d-1)}{(d+1)}}.
    \end{equation}
    Combining the upper bound for $c_1$, the upper bound in \eqref{eq: Bound variable epsilon} and inequality $(i)$ of Theorem \ref{thm: Existence A-localized states} and we obtain
    \begin{equation*}
        \|\Delta|_{A^c}\|_{\frac{d+1}{d-1}}\leq \left(\frac{d}{d-1}\right)^{d-1}(n+1)^{\frac{d(d-1)}{(d+1)}} (q-1)^{\frac{2(d-1)}{d+1}}e^{-(d-1)\beta u}.
    \end{equation*}
    Note that this gives us the desired result $(i)$ of Proposition \ref{prop: Bounds A-loc. states} after the application of $\frac{d-1}{d+1}\leq 1$ and $\frac{d}{d-1}\leq 2$.

Note that $\frac{d}{d+1}\leq 1$ directly implies $c_2 \leq d(d^{\frac{1}{d-1}}-1)(n+1)$. Consequently, we obtain 
    \begin{equation*}
        \min_{i\in A}\Delta(i)>1-d(d^{\frac{1}{d-1}}-1)(n+1)(q-1)^{\frac{2}{d+1}}e^{-\beta u},
    \end{equation*}
    with inequality $(ii)$ of Theorem \ref{thm: Existence A-localized states} where we additionally used \eqref{eq: Bound variable epsilon}. Combining this lower bound with the fact that $\frac{2}{d+1}\leq 1$ for all $d\geq 2$ and these considerations result in statement $(ii)$.

Exploiting the bound \eqref{eq: upper bound c1} in the definition of $c_3$ gives us
    \begin{equation*}
        c_3\leq n^{-1}d^{\frac{d+1}{d-1}}\left(\frac{d}{d-1}\right)^{d+1}(n+1)^d.
    \end{equation*}
    Using $\frac{d+1}{d-1}\leq 3$, $\frac{d}{d-1}\leq 2$ for $d\geq 2$ and we obtain further $c_3\leq 2^{d+1}d^{3}(n+1)^d$. Consequently, $(iii)$ of Theorem \ref{thm: Existence A-localized states} together with the upper bound in \eqref{eq: Bound variable epsilon} leads to 
    \begin{equation*}
        \sum_{i\in A^c} \pi_A(i) \leq 2^{d+1}d^{3}(n+1)^d(q-1)^{2}  e^{-(d+1)\beta u}
    \end{equation*}
    and hence it implies $(iii)$ of Proposition \ref{prop: Bounds A-loc. states}.

    Let us discuss the remaining bounds in $(iv)$ of Proposition \ref{prop: Bounds A-loc. states}. One can upper bound $c_4$ as $c_4\leq 3d(d^{\frac{1}{d-1}}-1)(n+1)$ where we used again the inequality $\frac{d+1}{d-1}\leq 3$ for $d\geq 2$ and the upper bound of $c_2$. This implies
    \begin{equation*}
        \pi_A|_A\leq \left(1-3d(d^{\frac{1}{d-1}}-1)(n+1)(q-1)^{\frac{2}{d+1}}e^{-\beta u}\right)^{-1}\frac{1}{|A|}
    \end{equation*}
    with $(iv)$ of Theorem \ref{thm: Existence A-localized states} and the upper bound of $\epsilon$ in \eqref{eq: Bound variable epsilon}. Hence, we obtain the second inequality of $(iv)$ in Proposition \ref{prop: Bounds A-loc. states}. For the lower bound of $(iv)$ in Propositon \ref{prop: Bounds A-loc. states}, we need to upper bound $c_5$ as follows
    \begin{equation*}
        c_5\leq \left(\frac{d}{d-1}\right)^{d+1}(n+1)^{d}+3d(d^{\frac{1}{d-1}}-1)(n+1).
    \end{equation*}
 Here, we applied the upper bound of $c_1$ in \eqref{eq: upper bound c1} and the upper bound of $c_4$. Combining these considerations in $(iv)$ of Theorem \ref{thm: Existence A-localized states} together with \eqref{eq: Bound variable epsilon} and it results in
 \begin{equation*}
        \pi_A|_A\geq \left(1-\Bigg(\left(\frac{d}{d-1}\right)^{d+1}(n+1)^{d}+3d(d^{\frac{1}{d-1}}-1)(n+1)\Bigg)(q-1)^{\frac{2}{d+1}}e^{-\beta u}\right)\frac{1}{|A|}.
    \end{equation*}
    Comparing this inequality with statement $(iii)$ of Proposition \ref{prop: Bounds A-loc. states} leads to the desired result.
\end{proof}

\section*{Appendix C: Proof of Lemma \ref{lem: Bounds for M}}\label{Sec: Appendix C}
\addcontentsline{toc}{section}{Appendix C}

We will prove this lemma from Proposition \ref{prop: Bounds A-loc. states}.

\begin{proof}
 First of all, the bounds in \eqref{eq: Bounds for pi_A} follow directly from $(iii)$ of Proposition \ref{prop: Bounds A-loc. states} and the fact that $\pi_A$ is close to the equidistribution on $A$.

Let us continue to prove \eqref{eq: Bounds for M}. In order to do this, recall from \eqref{eq: Transition matrix} that the transition matrix $P_A$ of $\mu_A$ can be written in terms of $\pi_A$ and the transfer operator $Q$. Let us start to give lower bounds for the denominator in different situations for the value $i\in \Z_q$. If $i\in A$, the denominator is bounded from below by
\begin{align}\label{eq: bound denominator A}
    \big(Q * \pi_A^{\frac{d}{d+1}}\big)(i)\geq \pi_A^{\frac{d}{d+1}}(i)\geq (1-(C_1+C_2)e^{-\beta u})^{\frac{d}{d+1}}|A|^{-\frac{d}{d+1}}
\end{align}
and if $i\in A^c$, we have 
\begin{equation}\label{eq: bound denominator A^c}
    \big(Q * \pi_A^{\frac{d}{d+1}}\big)(i)\geq Q(i-\Tilde{i}) \pi_A^{\frac{d}{d+1}}(\Tilde{i})\geq (1-(C_1+C_2)e^{-\beta u})^{\frac{d}{d+1}} |A|^{-\frac{d}{d+1}}e^{-\beta U}
\end{equation}
where $\Tilde{i}\in A$ arbitrary. Here, we used $(iv)$ of Proposition \ref{prop: Bounds A-loc. states} together with the $u,U,d$-bounds, given in \eqref{eq: u,U-bounds}.

We begin with the upper bound of $M(1,1)$. First of all, note that $P_A(a,a)$ is very probable, see $(ii)$ of Proposition \ref{prop: Bounds A-loc. states},  and hence it is reasonable to bound it by one. Secondly, we can upper bound the numerator of $P_A(a,A\setminus \{a\})$ with Hölder's inequality as follows 
\begin{equation}\label{eq: 1. bound M(1,1)}
    \sum_{j\in A\setminus \{a\}} \pi_A(j)^{\frac{d}{d+1}}Q(a-j)\leq \|\pi_A|_{A\setminus \{a\}}\|^{\frac{d}{d+1}}_1\|Q-\mathds{1}_{\{0\}}\|_{d+1},
\end{equation}
where we used $\frac{d+1}{d}$ and $d+1$ as the exponents of Hölder's inequality. We can treat the first factor with $(iv)$ of Proposition \ref{prop: Bounds A-loc. states} as
\begin{equation}\label{eq: 2. bound M(1,1)}
    \left(\sum_{j\in A\setminus \{a\}}\pi_A(j)\right)^{\frac{d}{d+1}}\leq (1-C_2e^{-\beta u})^{-\frac{d}{d+1}} (1-|A|^{-1})^{\frac{d}{d+1}}
\end{equation}
and the second one with the definition of $Q$ and \eqref{eq: u,U-bounds} as 
\begin{equation}\label{eq: 3. bound M(1,1)}
    \left(\sum_{j=1}^{q-1}Q(j)^{d+1}\right)^{\frac{1}{d+1}}\leq (q-1)^{\frac{1}{d+1}}e^{-\beta u}.
\end{equation}
Combining \eqref{eq: bound denominator A}, \eqref{eq: 1. bound M(1,1)}-\eqref{eq: 3. bound M(1,1)} and the fact that $C_1+C_2\geq C_2$ leads to 
\begin{align}
     M(1,1)\leq (1-(C_1+C_2)e^{-\beta u})^{-\frac{2d}{d+1}}(|A|-1)^{\frac{d}{d+1}} (q-1)^{\frac{1}{d+1}}e^{-\beta u+t}+1.
\end{align} 
Choosing $(C_1+C_2)e^{-\beta u}<\frac{1}{2}$ and recall that $C_1>1$, see definition in Proposition \ref{prop: Bounds A-loc. states}, and thus we obtain the desired bound for $M(1,1)$ in \eqref{eq: Bounds for M}.

Similarly, let us take a look at the entry $M(1,0)$. The denominator of $P_A(a,A^c)$ is bounded by \eqref{eq: bound denominator A} and the numerator can be bounded again with Hölder's inequality 
\begin{equation}\label{eq: 1. bound M(1,0)}
    \sum_{j\in A^c} \pi_A(j)^{\frac{d}{d+1}}Q(a-j)\leq \|\pi_A|_{A^c}\|^{\frac{d}{d+1}}_1\|Q-\mathds{1}_{\{0\}}\|_{d+1}.
\end{equation}
For the first term, we can use $(iii)$ of Proposition \ref{prop: Bounds A-loc. states} and obtain the bound 
\begin{equation}\label{eq: 2. bound M(1,0)}
    \left(\sum_{j\in A^c} \pi_A(j)\right)^{\frac{d}{d+1}}\leq C_1^{\frac{d}{d+1}} e^{-d\beta u}.
\end{equation}
Using \eqref{eq: bound denominator A}, \eqref{eq: 3. bound M(1,1)}, \eqref{eq: 1. bound M(1,0)} and \eqref{eq: 2. bound M(1,0)} results in
\begin{align*}
     M(1,0)\leq C_1^{\frac{d}{d+1}}(1-(C_1+C_2)e^{-\beta u})^{-\frac{d}{d+1}}|A|^{\frac{d}{d+1}}(q-1)^{\frac{1}{d+1}}e^{-(d+1)\beta u+t}.
\end{align*}
Using again the fact that $C_1>1$ and $(C_1+C_2)e^{-\beta u}<\frac{1}{2}$ and we are able to achieve the prefactor $C_3$ as stated in \eqref{eq: Bounds for M}.

Let us continue by considering the case $M(0,1)$. In order to avoid the factor $e^{\beta U}$, which is bad for our consideration, we will bound $\sup_{a\in A^c}P(a,A)$ with one and hence $M(0,1)$ with $e^t$.

It remains to bound $M(0,0)$. We split $P_A(a,A^c)=P_A(a,A^c\setminus \{a\})+P_A(a,a)$ for all $a\in A^c$ and consider each term independently. Let us start with $P_A(a,A^c\setminus \{a\})$ whose denominator is bounded by \eqref{eq: bound denominator A^c}. The numerator can be bounded with Hölder's inequality as follows 
\begin{equation}\label{eq: 1. bound M(0,0)}
    \sum_{j\in A^c\setminus \{a\}}\pi_A(j)^{\frac{d}{d+1}}Q(a-j)\leq \|\pi_A|_{A^c} \|_1^{\frac{d}{d+1}} \|Q-\mathds{1}_{\{0\}}\|_{d+1}.
\end{equation}
Note that 
\begin{equation}\label{eq: 2. bound M(0,0)}
     \|\pi_A|_{A^c} \|_1^{\frac{d}{d+1}} \|Q-\mathds{1}_{\{0\}}\|_{d+1}\leq C_1^{\frac{d}{d+1}} (q-1)^{\frac{1}{d+1}}e^{-(d+1)\beta u},
\end{equation}
compare \eqref{eq: 3. bound M(1,1)} and \eqref{eq: 2. bound M(1,0)}. Consequently, we have 
\begin{equation}\label{eq: 3. bound M(0,0)}
    P_A(a,A^c\setminus \{a\})\leq C_1^{\frac{d}{d+1}} (1-(C_1+C_2)e^{-\beta u})^{-\frac{d}{d+1}}|A|^{\frac{d}{d+1}}(q-1)^{\frac{1}{d+1}}e^{-\beta((d+1)u-U)}.
\end{equation}
The other term can be upper bounded with $(i)$ of Proposition \ref{prop: Bounds A-loc. states} as 
\begin{equation}\label{eq: 4. bound M(0,0)}
    P_A(a,a)\leq \left(\sum_{i\in A^c}P_A(i,i)^{\frac{d+1}{d-1}}\right)^{\frac{d-1}{d+1}}\leq C_1e^{-(d-1)\beta u}.
\end{equation}
Combining \eqref{eq: 3. bound M(0,0)} and \eqref{eq: 4. bound M(0,0)} leads to 
\begin{equation*}
    M(0,0)\leq C_1^{\frac{d}{d+1}} (1-(C_1+C_2)e^{-\beta u})^{-\frac{d}{d+1}}|A|^{\frac{d}{d+1}}(q-1)^{\frac{1}{d+1}}e^{-\beta((d+1)u-U)+t}+C_1e^{-(d-1)\beta u+t}.
\end{equation*}
With similar considerations as before, this expression is bounded as in \eqref{eq: Bounds for M} and this ends the proof of Lemma \ref{lem: Bounds for M}.
\end{proof}

\hypersetup{urlcolor=blue}
\begin{footnotesize}

\end{footnotesize}

\end{document}